\PassOptionsToPackage{unicode}{hyperref}
\PassOptionsToPackage{naturalnames}{hyperref}

\documentclass[a4paper, 10pt]{article}
\usepackage[left=2cm, right=2cm]{geometry}
\usepackage{amsmath}
\usepackage{amsfonts}
\usepackage{amsthm}
\usepackage{mathtools}
\usepackage{comment}
\usepackage[title]{appendix}
\usepackage{enumitem}

\newtheorem{thm}{Theorem}[section]
\newtheorem{assumption}[thm]{Assumptions}
\theoremstyle{plain}
\newtheorem{lemma}[thm]{Lemma}
\newtheorem{cor}[thm]{Corollary}

\newtheorem{defn}[thm]{Definition}

\newenvironment{pfof}
    {   \par\noindent
        {\bf Proof of}
    }{
        \hfill$\Box$}
        
\usepackage{pgf, tikz, pgfplots}
\usepackage{mathrsfs}
\usetikzlibrary{arrows}
\usepackage{psfrag}

\usepackage{hyperref}
\hypersetup{
    colorlinks=true,
    linkcolor=blue,
    filecolor=magenta,      
    urlcolor=red,
}
\urlstyle{same}

\renewcommand{\P}{\mathbb{P}}
\newcommand{\Qn}{Q^{(N)}}

\DeclarePairedDelimiter\floor{\lfloor}{\rfloor}

\author{James R. Cruise\thanks{Riverlane Research, 59 St Andrews Street, Cambridge, CB2 3BZ. Email \texttt{james.cruise@riverlane.io}}, Fraser Daly\thanks{Department of Actuarial Mathematics and Statistics and the Maxwell Institute for Mathematical Sciences, Heriot-Watt University, Edinburgh, EH14 4AS UK. Email \texttt{f.daly@hw.ac.uk}}, Bemsibom Toh\thanks{Department of Actuarial Mathematics and Statistics and the Maxwell Institute for Mathematical Sciences, Heriot-Watt University, Edinburgh, EH14 4AS UK. Email \texttt{bct1@hw.ac.uk}}}
\title{Sample path large deviations for marked point processes in the many sources asymptotic with small buffers: Heavily and lightly loaded systems}

\begin{document}
\maketitle

\noindent{\bf Abstract}
Consider a queueing system fed by traffic from $N$ independent and identically distributed marked point processes. We establish several novel sample path large deviations results in the scaled uniform topology for such a system with a small buffer. This includes both the heavily loaded case (the load grows as $N\rightarrow\infty$) and the previously unexplored lightly loaded case (the load vanishes as $N\rightarrow\infty$); this latter case requires the introduction of novel speed scalings for such queueing systems. Alongside these sample path large deviations results, we introduce a new framework to explore the range of scalings in the many sources asymptotic for these systems.   

\vspace{12pt}

\noindent{\bf Key words and phrases:} Queuing theory, large deviations, marked point processes, uniform topology, heavy traffic

\vspace{12pt}

\noindent{\bf AMS 2010 subject classification:} 60F10, 60G55

\section{Introduction}

In this paper we prove a number of novel and insightful sample path large deviations (SPLD) results related to the many sources asymptotic regime for queueing networks when fed by traffic from marked point processes.  These results extend previously published results for queueing systems with small buffers, i.e., where the size of the buffer grows sub-linearly in the number of sources \cite{Cruise2009a}, and a fixed load, to both heavily loaded systems (the load tends to 1 as the number of sources grows) and lightly loaded systems (the load tends to 0 as the number of sources grows).  Throughout this work, for small buffers we retain the flavour of the previous Poisson convergence result \cite{Cruise2009a} but for heavily loaded cases this is mixed with a Gaussian limit, so the system behaves as if fed by traffic from a Brownian source. For lightly loaded systems the traditional large deviations scaling breaks down, so we need to introduce a novel speed scaling for the SPLD.  

To better understand how these results relate to each other, we introduce a novel framework to parametrise the different scalings and allow for key insights into how the different scaling results relate to each other, and, furthermore, important insights as to how to select the scaling of importance when considering practical applications.  

In the many sources asymptotic \cite{Botvich1995} we are interested in a series of queueing systems where the traffic arriving externally into the $N^{\text{th}}$ system is produced by a set of $N$ independent and identically distributed traffic sources each with mean arrival rate $\lambda$.  This asymptotic was initially introduced by \cite{Weiss1986} and has been well studied in application areas including core network routers, admission control, wireless networks \cite{doi:10.1080/15326340500481762,Subramanian2011,FERNANDEZVEIGA20031376}. The particular focus on the small buffer regime is driven by a desire to understand the effect of reducing buffer sizes on core networks when modelling a range of different congestion control protocols, for example TCP, but is also relevant for data centres wanting to make use of optical networking technologies where buffering is difficult.  

In addition, the study of the small buffer regime is useful in better understanding the power which can be obtained through multiplexing, rather than using a single server for each source: if services are aggregated together, how does a smaller than linear growth in buffer size affect system performance?  

The many-sources scaling was introduced by Alan Weiss \cite{Weiss1986} to explore the quality of service in data networks. In the original scaling, both the buffer size and the service rate grow linearly with the number of sources feeding the system. In the $N^{\text{th}}$ system, the buffer size of interest is $NB$ and the service rate is of the form $N\lambda + NC$, where $\lambda>0$, $C>0$. In that work, the traffic processes were limited to Markov on/off sources, which produce traffic at a constant rate when on, and for which transitions between states are governed by a Markov chain in continuous time. Since then, a large body of work has developed around using this asymptotic framework to analyse different queuing systems. Buffet \cite{Buffet1994} and  Buffet and Duffield \cite{Duffield1994} used martingale methods to obtain bounds for the queue length in queues with Markovian arrivals within this framework. This was extended to more general sources by Botvich and Duffield \cite{Botvich1995}, who obtained a rate function for the workload process in a single server queue in both discrete and continuous time. Simonian and Guibert \cite{Simonian1995} obtained bounds on the overflow probabilities in continuous time for queues fed by on/off sources. Courcoubetis and Weber \cite{Courcoubetis1996} considered a discrete-time analogue of this system, for which they characterized the rate function associated with the overflow probability. Likhanov and Mazumdar \cite{Likhanov1999} extended the work of Courcoubetis and Weber \cite{Courcoubetis1996} by obtaining exact bounds for buffer overflow probabilities for queues with finite capacity buffers. Since then, different authors have used this framework to investigate different problems in queuing theory, including buffer overflow in multiqueue systems \cite{Shroff2004, Subramanian2011, Zhao2009, Delas2002}, conditional delays in queues \cite{Yang2006, Yang2012, Yang2006a, Shakkottai2001} and information loss across networks \cite{Bhadra2010}. Different kinds of arrival processes have also been considered, for example, continuous-time Gaussian processes with stationary increments, including fractional Brownian motion and integrated Gaussian processes \cite{Debicki2003} and on/off flows with heavy-tailed (regularly varying) on periods \cite{Zwart2004}.

There have been several attempts to move beyond the linear scaling on buffer size, and consider systems with small buffers \cite{Mandjes2001, Ozturk2004}, and very large buffers \cite{Mandjes2000}. This is usually carried out by examining a second limit in $B$ on the associated rate function for the large deviations principle, either to let $B$ increase to $\infty$ or decrease to zero. This approach does not provide the richness in scalings that we consider, and also obscures the time-scales upon which the most likely events occur. In addition this approach does not enable us to understand the joint effect of varying load and buffer size together.

The moderate deviations framework for queues with many sources was introduced by Wischik \cite{moddev}, to study heavily loaded systems. The inspiration for this work was an attempt to marry together ideas from functional central limit theorems and large deviations results for the many flows asymptotic. To this end, the scaling considered was such that in the system with $N$ sources the buffer is of size $N^{(1+\gamma)/2}B$ and the service rate is $N\lambda + N^{(1+\gamma)/2}C$ for $\gamma \in (0,1)$.  Moderate deviations have also been studied by Puhalskii \cite{Puhalskii1999}, who focused on scaling the load of the system instead of the number of sources, and obtained logarithmic asymptotes for queue length and waiting time processes in single server queues and open queuing networks in heavy traffic, and by Chang et al$.$ \cite{Chang1996}, who obtained results for queues with long-range dependent input.

Another relevant scaling in the literature is that of the small buffer scaling introduced by Cao and Ramanan \cite{CaoRamanan2002}. Here the buffer size and load are kept constant as the number of flows is increased. This paper observes the short time-scales at which events occur and makes use of this idea to show that for general point processes the behaviour of queue length is as if it had been fed by Poisson traffic. The use of weak convergence enables the authors to obtain the full distribution in the limit but does not easily allow the extension to general networks and sample path results. In comparison to this, the small buffer results proved by Cruise \cite{Cruise2009a} enable the discussion of networks and other service disciplines, but provide only tail asymptotics, since they are obtained using large deviations techniques. The small buffer scaling has been investigated in further detail in \cite{Enachescu2006, Raina2005, Gu2007, Vishwanath2009, Eun2008}.

The aforementioned results focused on the scaling of a single server queue. Wischik extended these results to consider sample path results in discrete time for single server queues \cite{Wischik2001}, and for switches \cite{Wischik1999} operating in discrete time under the many-sources asymptotic. Subramanian \cite{Subramanian2011} also used this approach in analysing multi-queue systems operating in discrete time under a Max-Weight scheduling algorithm. This sample path approach is more powerful than simply considering the behaviour of the system in one dimension, because it allows us to use tools like the contraction principle \cite[Theorem 4.2.1]{Dembo1998} to make very general statements about how various quantities of interest (such as waiting times) scale as the system scales, without needing to re-do analogous calculations from scratch.

The main contribution of this paper is the development of a number of important sample path large deviations in the scaled uniform topology for generic point process arrivals for the many sources asymptotic with small buffers. Previous studies of systems with small buffers have focused on situations where the load was constant as the number of sources increased. Here we extend this to the important situation of heavy traffic and the novel setting of very lightly loaded systems. These situations are important in providing insight into the behaviour of real systems \cite{5409854,Cao2003} but also in providing a better understanding of resource pooling and the design of future systems.  In all cases we demonstrate the importance of understanding the most likely time-scale for events of interest, which for the small buffers considered in this paper are short. These short time-scales lead to parsimony in the results, with the heavily loaded systems demonstrating a Brownian behaviour for a large class of arrival processes; similarly for lightly loaded systems, the result depends only on a small number of parameters of the system. 

In addition, to our knowledge this is the first paper to properly explore the lightly loaded case, i.e., the situation when the load tends to zero as the number of sources increases. Here we demonstrate a large deviations principle which has an unusual rate: rather than being polynomial in $N$ we have $\log(N)$. This scaling provides both theoretical insights into the richness of the different scalings which can be achieved, while also providing qualitative insight into the key features which govern the behaviour of real systems.

The final contribution in this work is the development of a novel framework for exploring a range of scalings for the many sources asymptotic. As part of the introduction of this novel framework, we explore the associated sample path scalings. These scalings provide an insight into the key features which will affect the results obtainable in each case, and also into how the various scalings relate to each other. Beyond the small buffer results discussed in this paper, this framework introduces a large range of important and unexplored scalings which will require different techniques and are beyond the scope of this paper. 

The remainder of the paper is organised as follows: Section \ref{sec:model} introduces our model and the scaling framework we use throughout the work that follows.  Definitions and assumptions related to the traffic arriving at the system as well as other system parameters are detailed in Section \ref{sec::frame2}. Our main results are stated in Section \ref{sec::results}, and proved in Section \ref{sec::proofs}.

\section{Model and scaling framework}\label{sec:model}
We consider a sequence of $N$ independent single-server queues, such that the $N^{\text{th}}$ system is fed by traffic from $N$ independent identically distributed sources. This relates to the many sources asymptotic \cite{Botvich1995}.

Consider the $N^{\text{th}}$ system in this sequence. We introduce a new scaling parameterization, indexed by $(\alpha, \beta)$. The parameter $\alpha$ is used to control the buffer size scaling, such that in the $N^{\text{th}}$ system the buffer is size $N^{\alpha}B$. For  $\alpha >1$ the buffer grows faster than the number of sources, whereas for $\alpha <1$ the buffer grows slower than the number of sources. $\beta$ is used to control the excess service rate above the total arrival rate,  such that in the $N^{\text{th}}$ system the excess service capacity is  $N^\beta C$. For $\beta<1$ the load increases to one as the number of sources increases; the heavily loaded case. For $\beta > 1$ the load tends to zero as $N$ increases giving the lightly loaded scenario. 

The relationship between $\alpha$ and $\beta$ and the division of the parameter space is shown in Figure \ref{fig: division of parameter space}.
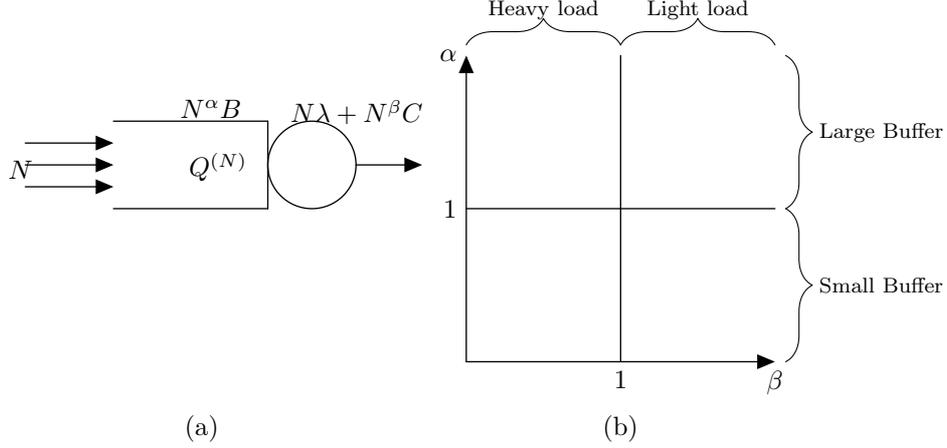
\begin{figure}[htbp]
	\centering
	\begin{tikzpicture}[line cap=round,line join=round,>=triangle 45,x=1cm,y=1cm,scale=0.58]
	\draw [line width=0.5pt] (0,7.5)-- (3.5,7.5);
	\draw [line width=0.5pt] (3.5,7.5)-- (3.5,5.5);
	\draw [line width=0.5pt] (3.5,5.5)-- (0,5.5);
	\draw [line width=0.5pt] (4.5,6.5) circle (1cm);
	\draw [->,line width=0.5pt] (5.5,6.5) -- (7,6.5);
	\draw [->,line width=0.5pt] (-2,6) -- (0,6);
	\draw [->,line width=0.5pt] (-2,6.5) -- (0,6.5);
	\draw [->,line width=0.5pt] (-2,7) -- (0,7);
	\draw [->,line width=0.5pt] (8,2) -- (8,9);
	\draw [->,line width=0.5pt] (8,2) -- (15,2);
	\draw [line width=0.5pt] (8,5.5)-- (15,5.5);
	\draw [line width=0.5pt] (11.5,2)-- (11.5,9);
	\draw [decorate,decoration={brace,amplitude=10pt,mirror,raise=4pt},yshift=0pt]
	(15,2) -- (15,5.5) node [black,midway,xshift=1.4cm] {\footnotesize
		Small Buffer};
	\draw [decorate,decoration={brace,amplitude=10pt,mirror,raise=4pt},yshift=0pt]
	(15,5.5) -- (15,9) node [black,midway,xshift=1.4cm] {\footnotesize
		Large Buffer};
	\draw [decorate,decoration={brace,amplitude=10pt,raise=4pt},yshift=0pt]
	(8,9) -- (11.5,9) node [black,midway,xshift=0cm, yshift=0.6cm] {\footnotesize
		Heavy load};
	\draw [decorate,decoration={brace,amplitude=10pt,raise=4pt},yshift=0pt]
	(11.5,9) -- (15,9) node [black,midway,xshift=0cm, yshift=0.6cm] {\footnotesize
		Light load};
	\draw (8,9) node[anchor=east] {$\alpha$};
	\draw (8,5.5) node[anchor=east] {$1$};
	\draw (11.5,2) node[anchor=north] {$1$};
	\draw (11.5,1) node[anchor=north] {(b)};
	\draw (2,1) node[anchor=north] {(a)};
	\draw (15,2) node[anchor=north] {$\beta$};
	\draw (1.5,7) node[anchor=north west] {$Q^{(N)}$};
	\draw (1.3,8.2) node[anchor=north west] {$N^{\alpha}B$};
	\draw (3.8,8.2) node[anchor=north west] {$N\lambda + N^\beta C$};
	\draw (-2.6,6.8) node[anchor=north west] {$N$};
	\end{tikzpicture}
	\caption{Summary of parameterization of the many flows asymptotic. (a) shows the parameters in the $N^{\text{th}}$ system. (b) is the parameter space for $(\alpha, \beta)$ and the representation of the regions relating to the various scenarios.}
	\label{fig: division of parameter space}
\end{figure}

 We denote the traffic from source $i \, (1 \leq i \leq N)$ as  $A^{(i)}$, where $A^{(i)}_t$ represents the total amount of traffic emitted by source $i$ in the interval $[0, t]$. We assume that each $A^{(i)}_{t}$ has the same distribution as a simple stationary point process $A$ that satisfies $\mathbb{E}[A(0,t)] = \lambda t$ for each $t>0$ for some $\lambda > 0$. The stability condition $C>0$ ensures that the queues do not grow unboundedly. The superposed process $A^{\oplus N} = \sum_{i=1}^{N} A^{(i)}$ represents the aggregate arrival from all $N$ sources. Arriving jobs that cannot be processed immediately are stored in a buffer, assumed to be infinite. We also assume that each job has the same size, and, without loss of generality, set its processing requirement to be one unit, assuming a service rate of $N\lambda +N^{\beta}C$. We use the steady state probability of the unfinished work exceeding a certain level $N^{\alpha}B$ as a surrogate for the steady state buffer overflow probability in a buffer of size $N^{\alpha} B$. Let $\Qn$ denote the stationary unfinished work in this system, when the number of sources is $N$, the arrival process of each source is distributed according to $A$, and the processing rate is $N\lambda + N^{\beta}C$. Under these conditions, $\Qn$ is given by
\begin{equation}
    \Qn = \sup_{t \in [0, \infty)} \left[A^{\oplus N} (0,t) - N \lambda t - N^\beta Ct \right]\,.
\label{eqn::scaled queue}
\end{equation}

To investigate the sample path scalings, we need to find a scaled process of the arrivals, $\tilde{A}^N_{\alpha, \beta}$, such that
\begin{align*}
\P(f_C(\tilde{A}^N_{\alpha, \beta}) > B) = \P(\Qn > N^\alpha B)\,,
\end{align*}
where $f_C$ is the queuing map, defined by
\begin{align*}
f_C(x) = \sup_{t>0} (x(t) - Ct)\,.
\end{align*}
Using the continuous form of Loynes' scheme for stationary queue length \cite{Loynes1962} we obtain
\begin{align*}
\P(\Qn > N^\alpha B) = \P \left(\sup_{t \geq 0} \sum_{i=1}^{N} A_i(0,t) - (N \lambda + N^\beta C) t > N^\alpha B \right)\,.
\end{align*}
We can re-arrange this to obtain
\begin{align*}
\P(\Qn > N^\alpha B) & = \P \left(\sup_{t \geq 0} \frac{\sum_{i=1}^N A_i(0,t)}{N^\alpha} - N^{1-\alpha} \lambda t - N^{\beta - \alpha} Ct > B \right) \\
								& = \P \left(\sup_{t' \geq 0} \frac{\sum_{i=1}^N A_i(0,N^{\alpha - \beta}t')}{N^\alpha} - N^{1-\beta} \lambda t' - Ct' > B \right) \\
								& = \P \left(f_C \left( \left \lbrace \frac{\sum_{i=1}^N A_i(0,N^{\alpha - \beta}t')}{N^\alpha} -  N^{1-\beta} \lambda t \right \rbrace_{t \geq 0}\right) > B \right),
\end{align*}
where $t' = tN^{\beta - \alpha}$. The natural scaled process to consider is therefore
\begin{equation}
\left \lbrace \tilde{A}^N_{\alpha, \beta} \right \rbrace_{t>0} = \left \lbrace \frac{\sum_{i=1}^N A_i(0,N^{\alpha - \beta}t')}{N^\alpha} -  N^{1-\beta} \lambda t' \right \rbrace_{t>0} \,.
\label{eqn::scaling}
\end{equation}

We will examine sample path large deviations principles in the following five cases: 
\begin{enumerate}[label=(\roman*)]
\item $\alpha=\beta=1$: original large deviations asymptotic,
\item $0<\alpha<\beta = 1$: small buffer large deviations asymptotic,
\item $1/2<\alpha=\beta <1$: original moderate deviations  asymptotic,
\item $\alpha < \beta<1$, $\alpha + \beta >1$: small buffer moderate deviations asymptotic,
\item $0<\alpha<1$, $\beta>1$: large deviations for light-load.
\end{enumerate}

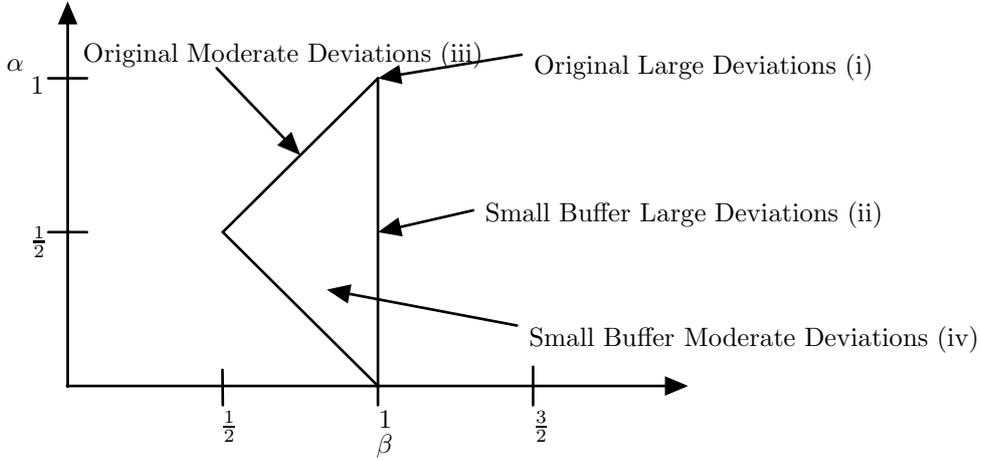
\begin{figure}[htb]
\begin{center}
\begin{tikzpicture}[line cap=round,line join=round,>=triangle 45,x=1cm,y=1cm, scale=0.51]
\draw [->,line width=1pt] (0,0) -- (0,10);
\draw [->,line width=1pt] (0,0) -- (16,0);
\draw [line width=1pt] (8,0)-- (4,4);
\draw [line width=1pt] (4,4)-- (8,8);
\draw [line width=1pt] (8,8)-- (8,0);
\draw [->,line width=1pt] (10.456575291389697,4.567072079319329) -- (8,4);
\draw [->,line width=1pt] (11.584907915144662,1.5711544231423582) -- (6.682497205036888,2.5049469393533625);
\draw [->,line width=1pt] (3.842211634895082,8.263334122654554) -- (6,6);
\draw [->,line width=1pt] (11.701631979671037,8.61350631623368) -- (8,8);
\draw (11.776123988802633,8.884464840823513) node[anchor=north west] {Original Large Deviations (i)};
\draw (10.521380906617248,5.030611088396964) node[anchor=north west] {Small Buffer Large Deviations (ii)};
\draw (11.65662464764212,1.8041288770631085) node[anchor=north west] {Small Buffer Moderate Deviations (iv)};
\draw (0.1548130609427649,9.183213193724796) node[anchor=north west] {Original Moderate Deviations (iii)};
\draw [line width=1pt] (3.9909545596370024,0.42011463460010173)-- (4,-0.5);
\draw [line width=1pt] (8,0)-- (8,-0.5);
\draw [line width=1pt] (12,0.5)-- (12,-0.5);
\draw [line width=1pt] (-0.5,4)-- (0.5,4);
\draw [line width=1pt] (-0.5,8)-- (0.5,8);
\draw (-1.2,4.4) node[anchor=north west] {$\frac{1}{2}$};
\draw (-1.2,8.3) node[anchor=north west] {$1$};
\draw (3.7,-0.4) node[anchor=north west] {$\frac{1}{2}$};
\draw (7.772896059925454,-0.4) node[anchor=north west] {$1$};
\draw (11.746249153512505,-0.4) node[anchor=north west] {$\frac{3}{2}$};
\draw (7.713146389345198,-1.0041056402089505) node[anchor=north west] {$\beta$};
\draw (-1.816926068205696,8.735090664372871) node[anchor=north west] {$\alpha$};
\begin{scriptsize}
\draw [fill=black] (0,0) circle (1pt);
\draw [fill=black] (8,0) circle (1pt);
\end{scriptsize}
\end{tikzpicture}
\end{center}
\caption{Regions in the parameter space covered by the scalings (i)--(iv).}
\label{fig::ppcases}
\end{figure}

Figure \ref{fig::ppcases} illustrates the relationship between the scalings (i)--(iv). Both the scalings in (i) and (iii) have been considered previously for many traffic processes for queue length \cite{Weiss1986, Botvich1995, Courcoubetis1996, Mandjes2001, Simonian1994} and sample path results in discrete time \cite{Wischik2001, moddev}. Here we extend these results to sample paths in continuous time.

It is natural to consider scalings (i) and (iii) together, and (ii), (iv) and (v) together because of the timescale upon which the events of interest occur. In the first two scalings the timescales stay constant as we increase the number of flows. In contrast, in the latter three cases they converge to 0, so overflow events happen quickly, which leads to a simple rate function and insensitivity of results to covariance structure. Separately in scalings (iii) and (iv) we see the heavy loads leading to Gaussian structure, and so the proofs reflect this.

\section{Traffic assumptions and associated spaces}\label{sec::frame2}
In this section we introduce the various processes of interest in the queuing systems we study. We begin with the arrivals processes, which we assume to be marked point processes.

In many practical applications, the use of a marked point process as a traffic model is sensible since traffic often arrives as units rather than a continuous stream, for example, packets in the Internet or customers in a shop. The marking allows the study of general systems where arrivals can bring more than one customer. An example is computer protocols where multiple packets can be transmitted together. Alternatively,  the marks can represent service times of customers and queue length can represent residual work load using a deterministic server.

Let $\mathbb{X}$ be a collection of independent and identically distributed simple stationary point processes \cite[Definition 3.3.II]{Daley2008}, with mean $\lambda$. Let $\mathbb{X}^{(i)}$ be the $i^\text{th}$ point process and $\mathbb{X}^{(i)}(t_1, t_2)$ be the number of points in the set $(-t_2, -t_1]$ for the $i^{\text{th}}$ process, so that $\mathbb{E}(X^{(i)}(0,t)) = \lambda t$ by stationarity. We associate a collection of stationary, positive, identically distributed random variables $\mathbb{Y}^{(i)} \in \mathbb{R}^{\infty}$ with each process, and let $Y^{(i)}_j$ be the $j^{\text{th}}$ random variable of the $i^{\text{th}}$ collection. Each collection of random variables is independent of the other collections and of all the point processes, and all the collections are identically distributed. Let ${Y}$ represent a generic random variable ${Y}_j^{(i)}$. We define the marked point process $A^{(i)}$, representing arrivals from source $i$, where we associate $Y_j^{(i)}$ with the $j^{\text{th}}$ point, by
\begin{align*}
    A^{(i)}(0,t) = \sum_{j=1}^{X^{(i)}(0,t)} Y_j^{(i)}\,,
\end{align*}
and 
\begin{align*}
    A^{(i)}(t_1, t_2) = A^{(i)}(0,t_2) - A^{(i)}(0,t_1)\,.
\end{align*}
We define the aggregates $A^{\oplus N}$ and $X^{\oplus N}$ by $A^{\oplus N}(t_1, t_2) {=} \sum_{i=0}^N A^{(i)}(t_1, t_2)$ and $X^{\oplus N}(t_1, t_2) {=} \sum_{i=0}^N X^{(i)}(t_1, t_2)$, respectively, the aggregate of $N$ marked point processes.

The server is a deterministic server such that the $N^\text{th}$ system, which has $N$ input sources, has service rate $N\lambda + N^\beta C$ and buffer size $N^\alpha B$. We let $Q^{(N)}$ be the stationary queue length in the $N^\text{th}$ system. This is given by 
\begin{align*}
    \Qn = \sup_{t \in [0, \infty)} \left[A^{\oplus N} (0,t) - N \lambda t - N^\beta Ct \right]\,.
\end{align*}

In addition, we will need the following definitions:
For
$x\in \mathbb{R}$ and $t\in [0,\infty)$, 
\begin{displaymath}
\Psi (x,t) = \sup_{\theta \in \mathbb{R}} \left[\theta x - t^{-1} \log \mathbb{E}[ e^{\theta A(0,t)}]\right]\,, \quad \Psi_\infty^1(x) = \liminf_{t \rightarrow \infty} \frac{t \Psi(x,t)}{\log t}\,.
\end{displaymath}
When considering the moderate deviations results, we will also need tighter bounds on the behaviour of the arrival process at large timescales. To achieve this we will need to consider the following limits:
 \begin{displaymath}
 \Psi_{\infty,d\rightarrow t}^2  = \liminf_{t \rightarrow \infty} \liminf_{d\rightarrow 0} \frac{t \Psi(\lambda\mathbb{E}(Y) + d ,t)}{d^2\log t}
\,,\end{displaymath}
 and 
  \begin{displaymath}
   \Psi_{\infty,t\rightarrow d}^2  = \liminf_{d\rightarrow 0} \liminf_{t \rightarrow \infty} \frac{t \Psi(\lambda\mathbb{E}(Y) + d ,t)}{d^2\log t}\,.
 \end{displaymath}

We will also need the moment generating function of $Y$, which we denote $M(\theta) {=} \mathbb{E}(e^{\theta Y})$, and the log moment generating function of $A(0,t)$, which we denote $\Lambda_t(\theta) {=} \log \mathbb{E}(e^{\theta A(0,t)})$. In addition, we define the set on which the moment generating function is finite, $\mathcal{D}_M {=} \lbrace \theta \in \mathbb{R}: M(\theta) < \infty \rbrace$. Finally, let $\mathcal{D}_y$ be the space of cadlag functions, $x: \mathbb{R}^+ \mapsto \mathbb{R}^+$ for which $x(0) = 0$ and 
\begin{align*}
    \lim_{t \to \infty} \frac{x(t)}{1+t} = y\,.
\end{align*}

While considering sample paths we will need to characterize compact sets on which the measure associated with the process is large. Marked point process naturally live in the space of cadlag functions in which it is difficult to classify compact sets. So we instead often make use of a linear interpolation of the marked point process which lives in the space of continuous functions. We define this as follows:

\begin{defn} \label{def:linerinterp}
Let $\tau_N^-(t)$ be the time of the last point before or at
$t$ of $X^{\oplus N}$, and  $\tau_N^+(t)$ be the time of the next point after $t$ of $X^{\oplus N}$, and let $\zeta_N(t)$ be its associated mark.
Then let $\bar{A}^{\oplus N}$ be the
polygonal approximation of $A^{\oplus N}$ defined by
\begin{displaymath}
\bar{A}^{\oplus N}(0,t)= A^{\oplus N}(0,t)
+\frac{t-\tau_N^-(t)}{\tau_N^+(t)-\tau_N^-(t)}\zeta_N(t).
\end{displaymath}
We define the scaled version, $\tilde{\bar{A}}^N_{\alpha,\beta}$, in the obvious way.
\end{defn}

Our results will be proved in the space of cadlag functions with a given long run mean rate $y$, $\mathcal{D}_y$, with the topology induced by the scaled uniform norm $\|\cdot\|_s$; see \cite{MR0232428}, \cite[page 29]{Ganesh2004} . In addition, in stating the results we utilize the subspace of absolutely continuous functions and the reproducing kernel Hilbert space $R_v$ for a given variance function $v$ \cite{Berlinet2004}.  We let $\mathscr{AC}$ denote the set of absolutely continuous functions on an appropriate space as required.

\section{Results} \label{sec::results}

We use this section to state out main results and the assumptions under which they hold; proofs are deferred until Section \ref{sec::proofs}. We begin with case (i), $\alpha=\beta =1$; for work in this regime in the discrete time setting, see Wischik \cite{Wischik2001}.  Our results in this continuous time setting will be established under the following assumptions:
\begin{assumption} \label{ass::ldlb}
$\,$
\begin{enumerate}
\item  There exist $\theta_0 >0$ and $K<\infty$ such that
\begin{displaymath}
\lim_{t\rightarrow 0}t^{-1}
\mathbb{E}\left[e^{\theta_0 A(0,t)}1_{\{X(0,t)>K\}}\right]=0.
\end{displaymath}
\item  $\Psi^1_\infty(x) > 0$ for all $x \neq \lambda \mathbb{E}(Y)$.
\item  $\Lambda_{t} (\theta) < \infty$ for all $t >0$ and $\theta \in \mathbb{R}$. 
\item $\{\bar{A}^{\oplus N}( \cdot)/N \}_{t\in [0,T)}$ is exponentially tight \cite[Theorem 4.2.10]{Dembo1998} in the space of continuous functions on the interval $[0,t)$ with the scaled uniform norm.
\end{enumerate}
\end{assumption}
With these assumptions we obtain the following:
\begin{thm}\label{thm::ldlb}
Let $A$ be a stationary marked point process.
Under Assumptions \ref{ass::ldlb}, the sequence  $\tilde{A}^N_{1,1}$ satisfies a sample path large deviations principle in the space $\mathcal{D}_{0}$ with the scaled uniform norm $||x||_s$. This has rate $N$, and good rate function $I_{1,1}(x)$, where
\begin{equation}\label{eqn::ldlbrf}
I_{1,1}(x)= \sup_{j \in \mathscr{J}, T\in\mathbb{R}^+} \Lambda^*_{j,T}(x)\,,
\end{equation}
where $\mathscr{J}$ is the collection of all ordered finite subsets
of $(0,1]$ with $j_0=0$, and
$$ \Lambda^*_{j,t}(x) = \sup_{\theta \in \mathbb{R}^{|j|}} \left \{ \sum_{i=1}^{|j|} (x(t j_i) - x(t j_{i-1}) ) \theta_i -\Lambda_{j,t}(x) \right \}\,,$$
with
$$\Lambda_{j,t}(x) = \log \mathbb{E} \left( e^{\sum_{i=1}^{\lvert j \rvert}  \theta_i A(j_{i-1} t,  j_{i} t )} - \lambda\mathbb{E}(Y)\sum_{i=1}^{\lvert j \rvert}\theta_i t(j_i -j_{i-1})\right).$$
\end{thm}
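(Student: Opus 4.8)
\emph{Proof proposal.} The plan is to follow the standard scheme for sample path large deviations of superpositions in the many sources regime (compare \cite{Wischik2001, Ganesh2004}): first a finite-dimensional LDP from Cram\'er's theorem, then a lift to the topology of pointwise convergence via Dawson--G\"artner, and finally an upgrade to the scaled uniform topology on $\mathcal{D}_0$ using exponential tightness, after replacing the marked point process by the continuous polygonal interpolation of Definition \ref{def:linerinterp}. For the finite-dimensional step, fix $T\in\mathbb{R}^+$ and an ordered finite set $j=(0=j_0<j_1<\cdots<j_{|j|})\subset[0,1]$. The increment vectors $\big(A^{(i)}(j_0T,j_1T),\dots,A^{(i)}(j_{|j|-1}T,j_{|j|}T)\big)$, $1\le i\le N$, are i.i.d.\ $\mathbb{R}^{|j|}$-valued, and by Assumption \ref{ass::ldlb}(3) their joint cumulant generating function is finite on all of $\mathbb{R}^{|j|}$. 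Cram\'er's theorem in $\mathbb{R}^{|j|}$ then gives an LDP at speed $N$ for $N^{-1}\sum_{i=1}^N(\cdots)$ with good rate function the Legendre transform of that cumulant generating function; shifting each coordinate by the deterministic drift $\lambda\mathbb{E}(Y)\,T(j_i-j_{i-1})$ (the centering already present in $\tilde A^N_{1,1}$ and in $\Lambda_{j,T}$) and applying the contraction principle along the linear bijection between increments and values, the vector $\big(\tilde A^N_{1,1}(Tj_1),\dots,\tilde A^N_{1,1}(Tj_{|j|})\big)$ satisfies an LDP at speed $N$ with good rate function $\Lambda^*_{j,T}$.

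The finite-dimensional LDPs above are consistent under refinement of the time points, so the Dawson--G\"artner theorem \cite[Theorem 4.6.1]{Dembo1998} yields an LDP for $\tilde A^N_{1,1}$ at speed $N$ on the space of all functions $[0,\infty)\to\mathbb{R}$ with the topology of pointwise convergence, with rate function $\sup_{j,T}\Lambda^*_{j,T}(x)=I_{1,1}(x)$. Assumption \ref{ass::ldlb}(2) enters here to guarantee that this rate function is genuinely supported on $\mathcal{D}_0$: if $x(t)/(1+t)$ does not tend to $0$ then, testing with the two-point set $\{0,1\}$ along a sequence $T\to\infty$, one has $\Lambda^*_{\{0,1\},T}(x)=T\Psi\big(\lambda\mathbb{E}(Y)+x(T)/T,\,T\big)$, and since the argument stays bounded away from $\lambda\mathbb{E}(Y)$ the hypothesis $\Psi^1_\infty>0$ forces this to diverge, so $I_{1,1}(x)=+\infty$ outside $\mathcal{D}_0$.

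It remains to strengthen the topology. We first show that $\tilde A^N_{1,1}$ and its polygonal version $\tilde{\bar A}^N_{1,1}$ are exponentially equivalent \cite[Theorem 4.2.13]{Dembo1998} in the scaled uniform norm $\|\cdot\|_s$: on each inter-point interval of $X^{\oplus N}$ the two paths differ by at most the size of the mark at the enclosing point, so $\|\tilde A^N_{1,1}-\tilde{\bar A}^N_{1,1}\|_s$ is controlled by the largest scaled mark, and Assumption \ref{ass::ldlb}(1) makes the presence of a mark of order $N$ before any fixed time superexponentially unlikely, while the $(1+t)^{-1}$ weighting together with the tail estimate of the previous paragraph handles large $t$; hence $\mathbb{P}(\|\tilde A^N_{1,1}-\tilde{\bar A}^N_{1,1}\|_s>\delta)$ decays faster than $e^{-cN}$ for every $c>0$. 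In particular $\tilde{\bar A}^N_{1,1}$ also satisfies the pointwise-topology LDP with rate function $I_{1,1}$. Since $\tilde{\bar A}^N_{1,1}$ takes values in the continuous functions and is exponentially tight in $\|\cdot\|_s$ by Assumption \ref{ass::ldlb}(4), and the identity embedding of $(\mathcal{D}_0,\|\cdot\|_s)$ into the pointwise topology is continuous and injective, a standard exponential-tightness argument (the inverse contraction principle, see \cite[Corollary 4.2.6]{Dembo1998}) promotes this to an LDP for $\tilde{\bar A}^N_{1,1}$ in $(\mathcal{D}_0,\|\cdot\|_s)$ with the same rate function $I_{1,1}$, which is then automatically good as the rate function of an exponentially tight family; a final application of exponential equivalence transfers the LDP back to $\tilde A^N_{1,1}$, completing the proof.

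\emph{Main obstacle.} The Cram\'er and Dawson--G\"artner steps are routine once Assumption \ref{ass::ldlb}(3) is granted. The crux is the last step: making the comparison between the marked point process and its interpolation quantitative \emph{uniformly in $t$} under the scaled norm on the non-compact half-line, and verifying that Assumption \ref{ass::ldlb}(1) is precisely strong enough to suppress the interpolation error at speed $N$ while Assumption \ref{ass::ldlb}(2) governs the behaviour as $t\to\infty$; the interaction of these tail controls with the exponential tightness hypothesis is where the real work lies.
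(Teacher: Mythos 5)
Your overall route is the same as the paper's: a finite-dimensional LDP for the increments (your Cram\'er appeal is the i.i.d.\ specialisation of the G\"artner--Ellis argument the paper packages as Lemma \ref{lem::finiteA}), a Dawson--G\"artner lift to the pointwise topology, exponential equivalence of $\tilde A^N_{1,1}$ with the polygonal interpolation $\tilde{\bar A}^N_{1,1}$, and an inverse-contraction upgrade to $(\mathcal{D}_0,\|\cdot\|_s)$ driven by exponential tightness. So I will focus on where your write-up diverges in substance.

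There is a genuine gap in the step where you invoke Assumption \ref{ass::ldlb}(4) to conclude that $\tilde{\bar A}^N_{1,1}$ is exponentially tight in $\|\cdot\|_s$. That assumption only gives exponential tightness on each finite interval $[0,T)$, i.e.\ in $\mathscr{C}^T$ with the (scaled) uniform norm, not on the whole half-line. The passage from ``tight on every $[0,T)$'' to ``tight in $(\mathcal{D}_0,\|\cdot\|_s)$'' is exactly the non-trivial content of the paper's Lemma \ref{lem::infinitesp}, and it requires a separate tail estimate controlling $\sup_{t>t_0}[\tilde A^N_{1,1}(t)-Ct]$ at speed $N$ (the paper's Lemma \ref{lem::lt2}, which is where Assumption \ref{ass::ldlb}(2) actually does work). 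One then has to intersect the finite-time compacts delivered by Assumption (4) with the tail-control sets coming from that estimate, and verify the intersection is $\|\cdot\|_s$-compact. Your ``two-point test'' showing $I_{1,1}(x)=\infty$ outside $\mathcal{D}_0$ is a statement about the rate function, not about exponential tightness, and does not substitute for this construction; it is instead the ingredient needed (together with Lemma \ref{lem::C0}) to restrict the state space via \cite[Lemma 4.1.5]{Dembo1998} once the LDP is in hand. You do flag in your ``main obstacle'' note that this interaction is where the real work lies, but the proof as written asserts the conclusion rather than supplying the argument.

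A smaller point: your exponential-equivalence paragraph attributes the suppression of large interpolation errors to Assumption \ref{ass::ldlb}(1). That assumption is the short-time MGF control used to identify the Poissonian limit in Lemma \ref{lem::pple}; the exponential equivalence of $\tilde A^N_{1,1}$ and $\tilde{\bar A}^N_{1,1}$ (the paper's Lemma \ref{poly} with $\alpha=\beta=1$) actually rests on the mark MGF $M(\theta)$ being finite for all $\theta\in\mathbb{R}$, which here follows from Assumption \ref{ass::ldlb}(3) rather than (1). The correct ingredient is available, so this is a misattribution rather than a missing hypothesis, but it should be fixed.
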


We now turn to results for small buffers which involve short timescales. 
We start by considering scaling (ii), where $0<\alpha<\beta=1$. Results here have previously been established by \cite{Cruise2009a}; we state them here (together with an outline of the proof in Section \ref{sec::proofs}) to illustrate how this fits into the framework which we introduce.  In this setting we need the following assumptions:
\begin{assumption}\label{ass::ldsb}
$\,$
\begin{enumerate}
\item  There exist $\theta_0 >0$ and $K<\infty$ such that
\begin{equation*}
\lim_{t\rightarrow 0}t^{-1}
\mathbb{E}\left[e^{\theta_0 A(0,t)}1_{\{X(0,t)>K\}}\right]=0\,.
\end{equation*}
 \item  $\Psi^1_\infty(x) > 0$ for all $x \neq \lambda \mathbb{E}(Y)$.
\item  $M (\theta)< \infty$ for all $\theta \in \mathbb{R}$.
\end{enumerate}
\end{assumption}
With these we have:
\begin{thm}\label{thm::ldsb}
Let $A$ be a stationary marked point process.
Under Assumptions \ref{ass::ldsb} and given $0<\alpha<1$, the sequence  $\tilde{A}^N_{\alpha,1}$ satisfies a sample path large deviations principle in the space $\mathcal{D}_{0}$ with the scaled uniform norm $||x||_s$. This has rate $N^{\alpha}$, and good rate function $I_{\alpha,1}(x)$, where
\begin{equation}\label{eqn::ldsbrf}
I_{\alpha,1}(x)= \begin{cases}
\int_0^{\infty} \Omega^*(\dot{x}+\lambda \mathbb{E}(Y)) dt &\textrm{ if $x(0)=0$ and $x\in\mathscr{AC}$\,,} \\
\infty &\textrm{otherwise\,,}
\end{cases}
\end{equation}
and
\begin{equation}\label{gamma}
\Omega^*(y)=\sup_{\theta \in \mathbb{R}} \big[ \theta y - \lambda(M(\theta)-1) \big]\,.
\end{equation}
\end{thm}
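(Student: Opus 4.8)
The plan is to identify this as a Sanov/Mogulskii-type sample path LDP where the short timescale regime forces the arrival process to behave, at the level of large deviations, like a compound Poisson process, and then to transfer the LDP through the queuing map by continuity. Recall from \eqref{eqn::scaling} that $\tilde{A}^N_{\alpha,1}$ is built from $A^{\oplus N}(0, N^{\alpha-1}t')/N^\alpha - N^{1-1}\lambda t' = A^{\oplus N}(0, N^{\alpha-1}t')/N^\alpha - \lambda t'$. Since $\alpha<1$, the time horizon $N^{\alpha-1}t' \to 0$, so we are looking at the superposition of $N$ i.i.d. marked point processes, each observed on a vanishingly short interval, and rescaled by $N^\alpha$. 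The key heuristic is that on such a short interval each source contributes at most one point with overwhelming probability (this is where Assumption \ref{ass::ldsb}(1) enters), so the increments of $A^{\oplus N}$ over disjoint small intervals are, to leading exponential order, independent and each governed by the tilted law whose cumulant generating function is $\lambda(M(\theta)-1)$ — exactly the compound Poisson cumulant appearing in \eqref{gamma}.

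First I would establish the finite-dimensional LDP: for any finite ordered set of times $0=s_0<s_1<\dots<s_k$, the vector of increments $\bigl(\tilde{A}^N_{\alpha,1}(s_i) - \tilde{A}^N_{\alpha,1}(s_{i-1})\bigr)_{i=1}^k$ satisfies an LDP at speed $N^\alpha$ with rate function $\sum_{i=1}^k (s_i-s_{i-1})\,\Omega^*\!\bigl(y_i/(s_i-s_{i-1}) + \lambda\mathbb{E}(Y)\bigr)$, i.e. the obvious additive combination of \eqref{gamma}. This is a Gärtner–Ellis computation: one shows that $N^{-\alpha}\log\mathbb{E}\exp\bigl(N^\alpha\sum_i\theta_i(\tilde A^N(s_i)-\tilde A^N(s_{i-1}))\bigr) \to \sum_i (s_i-s_{i-1})\,[\,\lambda(M(\theta_i)-1) - \theta_i\lambda\mathbb{E}(Y)\,]$, using independence across the $N$ sources to factorise, the short-interval point-count control from Assumption \ref{ass::ldsb}(1) to replace $\Lambda_{N^{\alpha-1}(s_i-s_{i-1})}(\theta)$ by its first-order Taylor term $\lambda N^{\alpha-1}(s_i-s_{i-1})(M(\theta)-1)$, and Assumption \ref{ass::ldsb}(3) ($M(\theta)<\infty$ everywhere) so the limiting cumulant is finite and differentiable, licensing Gärtner–Ellis. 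Legendre-transforming the separable limit yields the separable rate function, which is precisely the discretisation of $\int_0^\infty \Omega^*(\dot x + \lambda\mathbb{E}(Y))\,dt$.

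Next I would upgrade from finite-dimensional to the sample path LDP in $\mathcal{D}_0$ with the scaled uniform norm. The route is: (a) prove exponential tightness at speed $N^\alpha$ of $\{\tilde{A}^N_{\alpha,1}\}$ in $(\mathcal{D}_0,\|\cdot\|_s)$, working with the polygonal approximation $\tilde{\bar A}^N_{\alpha,1}$ of Definition \ref{def:linerinterp} to stay in the space of continuous functions where compact sets are characterised by a modulus-of-continuity condition, then showing $\tilde{\bar A}^N_{\alpha,1}$ and $\tilde A^N_{\alpha,1}$ are exponentially equivalent in $\|\cdot\|_s$ (the interpolation error per point is a single mark divided by $N^\alpha$, controllable via Assumption \ref{ass::ldsb}(1) and the decay of $\Psi^1_\infty$ from Assumption \ref{ass::ldsb}(2) which handles the long-time tail $\lim_t x(t)/(1+t)=0$ behaviour and ensures goodness of the rate function); (b) invoke the Dawson–Gärtner projective-limit theorem together with the finite-dimensional LDP and exponential tightness to conclude a full LDP, whose rate function is the projective limit, i.e. $\sup$ over finite time-sets of the discretised functionals; (c) identify this projective-limit rate function with $I_{\alpha,1}$ in \eqref{eqn::ldsbrf} — the standard Mogulskii identification, where the supremum over partitions of $\sum (s_i-s_{i-1})\Omega^*(\Delta x_i/\Delta s_i + \lambda\mathbb{E}(Y))$ equals $\int_0^\infty\Omega^*(\dot x+\lambda\mathbb{E}(Y))\,dt$ when $x$ is absolutely continuous with $x(0)=0$ and is $+\infty$ otherwise, using convexity of $\Omega^*$ (Jensen for the lower bound, a.c. differentiation for the upper bound) and the superlinear growth of $\Omega^*$ (from $M(\theta)<\infty$ for all $\theta$) to force absolute continuity and to reduce the non-a.c. case to $+\infty$.

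The main obstacle I expect is step (a), specifically the exponential equivalence of the marked point process and its polygonal interpolation in the \emph{scaled} uniform norm at the non-standard speed $N^\alpha$: one must show that the maximal jump of $A^{\oplus N}$ over the whole (shrinking) horizon, divided by $N^\alpha$, is superexponentially negligible, and more delicately that the local oscillation of $\tilde A^N_{\alpha,1}$ — equivalently the number of points of $X^{\oplus N}$ falling in a short window of the already-short macroscopic interval, times a typical mark — cannot build up a uniform deviation of order $1$ with probability worse than $e^{-o(N^\alpha)}$. This is exactly where Assumption \ref{ass::ldsb}(1) is used at full strength: it gives, after the time change, the estimate $\mathbb{E}[e^{\theta_0 A^{\oplus N}(0,N^{\alpha-1}\delta)} \mathbf{1}_{\{X^{\oplus N}(0,N^{\alpha-1}\delta) > KN\}}] \le (\text{small})^N$, which both caps the per-source point count (justifying the compound-Poisson replacement in the Gärtner–Ellis step) and, combined with the scaled-uniform structure, yields the needed exponential tightness and equivalence. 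Everything downstream — projective limit, rate-function identification, goodness — is then routine Mogulskii-style bookkeeping, and the queuing-map continuity (not needed for the theorem as stated, which is about $\tilde A^N_{\alpha,1}$ itself) would only be invoked if one subsequently contracts to $Q^{(N)}$.
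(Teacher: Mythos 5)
Your proposal follows essentially the same route as the paper: a G\"artner--Ellis finite-dimensional LDP driven by the short-timescale compound Poisson limit of the log-MGF (the paper's Lemma \ref{lem::pple}), an upgrade to sample-path level via the polygonal interpolation of Definition \ref{def:linerinterp} together with exponential equivalence (Lemma \ref{poly}), Dawson--G\"artner projective limits over partitions and then over $T$, a separate long-timescale tail bound (Lemma \ref{lem::lt2}, using $\Psi^1_\infty>0$) to justify the extension to $[0,\infty)$, and a Mogulskii-style identification of the rate function. The only caveat worth flagging is a small misattribution of roles: the exponential equivalence of $\tilde{A}^N_{\alpha,1}$ and $\tilde{\bar{A}}^N_{\alpha,1}$ is controlled by Assumption \ref{ass::ldsb}.3 ($M(\theta)<\infty$ for all $\theta$) via a Chernoff bound on the marks, not by Assumptions \ref{ass::ldsb}.1--2, which instead feed the finite-dimensional limit and the infinite-horizon truncation respectively.
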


Next we look at the moderate deviations case (iii), as first examined in discrete time by Wischik \cite{moddev}; here $1/2 <\alpha=\beta<1$. We need to modify the assumptions under which we work slightly in our continuous time setting.
\begin{assumption} \label{ass::mdlb}
$\,$
\begin{enumerate}
\item  There exist $\theta_0 >0$ and $K<\infty$ such that
\begin{displaymath}
\lim_{t\rightarrow 0}t^{-1}
\mathbb{E}\left[e^{\theta_0 A(0,t)}1_{\{X(0,t)>K\}}\right]=0\,.
\end{displaymath}
\item $\Psi_{\infty,d\rightarrow t}^2 > 0$  and $\Psi_{\infty,t\rightarrow d}^2 > 0$.
\item There exists $\theta^*>0$  such that, for all $\theta \in [0,\theta^*)$ and $t\in\mathbb{R}^+$, $\Lambda_{t} (\theta) < \infty$.
\item For $\alpha \in (1/2,1)$, $\{\tilde{\bar{A}}_{\alpha,\alpha}(\cdot) \}_{t\in [0,T)}$ is exponentially tight in the space of continuous  functions on the interval $[0,t)$ with the scaled uniform norm.
\end{enumerate}
\end{assumption}
We then obtain the following:
\begin{thm}\label{thm::mdlb}
Let $A$ be a stationary marked point process with continuous variance function $v$.
Under Assumptions \ref{ass::mdlb}, for $1/2<\alpha<1$ the sequence  $ \tilde{A}^N_{\alpha,\alpha}$ satisfies a sample path large deviations principle in the space $\mathcal{D}_{0}$ with the scaled uniform norm $||x||_s$. This has rate $N^{2 \alpha -1}$, and good rate function $I_{\alpha,\alpha}(x)$, where
\begin{equation}\label{eqn::mdlbrf}
I_{\alpha,\alpha}(x)= \begin{cases} \frac{1}{2} \| x\|^2_{R_v}  & \text{if $x\in R_{v}$\,,} \\
\infty & \text{otherwise\,,}
\end{cases} 
\end{equation}
where $R_v$ is the reproducing kernel Hilbert space associated  with variance function $v$. 
\end{thm}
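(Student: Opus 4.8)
The plan is to obtain this moderate deviations principle by a projective-limit argument combined with an exponential tightness upgrade, paralleling the structure of Theorem \ref{thm::ldlb} but replacing the Legendre-transform rate function by its quadratic (Gaussian) counterpart. First I would establish a finite-dimensional MDP: for any fixed finite ordered set $j=(j_0=0, j_1, \dots, j_{|j|}) \subset (0,T]$, consider the vector of scaled increments of $\tilde A^N_{\alpha,\alpha}$ over the intervals $(j_{i-1}T, j_i T]$. Because the $N$ sources are i.i.d.\ and the scaling in \eqref{eqn::scaling} for $\alpha=\beta$ divides the superposition by $N^\alpha$ and runs time at rate $N^{\alpha-\beta}=N^0=1$, the relevant object is a sum of $N$ i.i.d.\ centred contributions normalised by $N^\alpha$, which is exactly the moderate-deviations normalisation (between the LLN scale $N$ and the CLT scale $N^{1/2}$) with speed $N^{2\alpha-1}$. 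Here Assumption \ref{ass::mdlb}(3) (finiteness of $\Lambda_t(\theta)$ on a neighbourhood of $0$) supplies the local exponential moment control needed to run the Gärtner--Ellis theorem at this scale; the limiting logarithmic moment generating function is the quadratic form $\tfrac12 \theta^\top \Sigma_j \theta$, where $\Sigma_j$ is the covariance matrix of the increments built from the variance function $v$ (continuity of $v$, assumed in the hypothesis, is what makes this matrix well-defined and the quadratic form continuous in $j$). The finite-dimensional rate function is then $\tfrac12 \langle \text{increments}, \Sigma_j^{-1}\,\text{increments}\rangle$, the standard Gaussian rate.

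Next I would pass from finite-dimensional distributions to the sample-path level. By the Dawson--Gärtner theorem, the projective limit of the finite-dimensional MDPs yields an LDP on $\mathbb{R}^{(0,\infty)}$ with speed $N^{2\alpha-1}$ and rate function $\sup_{j,T} \tfrac12 \langle \cdot, \Sigma_j^{-1}\cdot\rangle$; the content of the reproducing kernel Hilbert space $R_v$ is precisely that this supremum of finite-dimensional quadratic forms equals $\tfrac12\|x\|_{R_v}^2$ when $x \in R_v$ and $+\infty$ otherwise — this is the variational/Loève characterisation of the RKHS norm, so I would cite \cite{Berlinet2004} for it rather than reprove it. Then I would use the exponential tightness hypothesis, Assumption \ref{ass::mdlb}(4), applied to the polygonal approximation $\tilde{\bar A}^N_{\alpha,\alpha}$ on each compact interval $[0,T)$ in the scaled uniform norm, together with the inverse contraction principle, to transfer the projective-limit LDP onto $\mathcal{D}_0$ with the topology induced by $\|\cdot\|_s$. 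A routine step here is checking that the difference between $\tilde A^N_{\alpha,\alpha}$ and its polygonal approximation $\tilde{\bar A}^N_{\alpha,\alpha}$ is superexponentially negligible at speed $N^{2\alpha-1}$ in the scaled uniform norm — this uses Assumption \ref{ass::mdlb}(1), the same small-time exponential moment / bounded-jumps estimate that controls clustering of points on short timescales, exactly as in the proofs of the earlier theorems. Goodness of the rate function follows because sublevel sets of $\tfrac12\|\cdot\|_{R_v}^2$ are bounded balls in $R_v$, which embed compactly into $(\mathcal{D}_0, \|\cdot\|_s)$ given the tail constraint defining $\mathcal{D}_0$ and the exponential tightness.

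The main obstacle I anticipate is the large-timescale control, and this is exactly why Assumption \ref{ass::mdlb}(2) is phrased through the two limits $\Psi^2_{\infty,d\to t}$ and $\Psi^2_{\infty,t\to d}$. The projective limit and exponential tightness together only give the LDP on bounded time intervals; to get a good rate function on all of $\mathcal{D}_0$ one must show that the process cannot accumulate excess work over arbitrarily long horizons without paying the full quadratic cost, uniformly in $T$. Concretely, one needs that $t\,\Psi(\lambda\mathbb{E}(Y)+d, t)$ grows at least like $d^2 \log t$ for large $t$ and small $d$ — strictly positive in both orders of the iterated limit — so that a deviation of the long-run rate is penalised enough to close the variational problem defining $I_{\alpha,\alpha}$ and to rule out escape of probability mass to infinity along the time axis at sub-quadratic cost. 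Handling this means carefully splitting the supremum over $t$ in the queueing map $f_C$ into a compact part (controlled by the projective-limit LDP) and a tail part (controlled by Assumption \ref{ass::mdlb}(2) via a Chernoff bound at the moderate-deviations scale), and then letting the cutoff grow; making the two pieces match up uniformly, and verifying that the resulting tail rate function is the RKHS one rather than something larger, is where the real work lies. The heavy-load feature — $\beta=\alpha<1$ forces the excess capacity $N^\beta C$ to be asymptotically negligible relative to $N\lambda$, so the centred process is what survives — is what makes the Gaussian limit appear, and I would emphasise in the write-up that this is structurally why only $v$, and no finer feature of $A$, enters the answer.
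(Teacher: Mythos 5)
Your proposal follows the same overall architecture as the paper: a finite-dimensional MDP via G\"artner--Ellis, a projective limit via Dawson--G\"artner, a topology upgrade through exponential tightness (Assumption~\ref{ass::mdlb}.4) plus the inverse contraction principle, exponential equivalence between $\tilde A^N_{\alpha,\alpha}$ and its polygonal interpolant, and long-timescale control from Assumption~\ref{ass::mdlb}.2. The one genuine point of divergence is how you identify the projective-limit rate function $\sup_{j,T}\tfrac12\langle\cdot,\Sigma_j^{-1}\cdot\rangle$ with $\tfrac12\|\cdot\|_{R_v}^2$: you invoke the variational/Lo\`eve characterisation of the RKHS norm directly from \cite{Berlinet2004}, whereas the paper sidesteps that formula by introducing a mean-zero Gaussian process $Z$ with variance function $v$, applying the generalised Schilder theorem to $Z/N^{1/2}$ to produce the RKHS rate, checking that the resulting finite-dimensional rate functions coincide with those just computed for $\tilde A^N_{\alpha,\alpha}$, and then appealing to uniqueness of rate functions. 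Both routes are valid; yours is more direct, while the paper's repurposes an off-the-shelf theorem rather than reproving the variational identity. Two small things worth tightening in your write-up: the paper computes the quadratic $\Omega_{\alpha,\alpha}$ explicitly by a Taylor expansion of $\Lambda_t(N^{\alpha-1}\theta)$ in the small parameter $N^{\alpha-1}$, which is cleaner than asserting the limit form, and it records that Assumption~\ref{ass::mdlb}.2 forces $v(t)/t^2\to 0$ as $t\to\infty$, which is what puts $R_v$ inside $\mathcal{D}_0$; and the long-timescale splitting (Lemmas~\ref{lem::lt3} and~\ref{lem::infinitesp}) should be framed as establishing exponential tightness of the \emph{path} in the scaled uniform topology, controlling deviations of $x(t)/t$ above \emph{and} below $\lambda\mathbb{E}(Y)$, rather than as a decomposition of the supremum inside the queueing map $f_C$, which is a downstream application of the sample-path LDP via the contraction principle rather than an ingredient of its proof.
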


Next we present moderate deviations (iv) for these fast timescales where $\alpha<\beta<1$ and $\alpha+\beta>1$. These scalings lie between the small buffer result and the first moderate deviations result.  In this setting we need the following assumptions:
\begin{assumption}\label{ass::mdsb}
$\,$
\begin{enumerate}
\item  There exist $\theta_0 >0$ and $K<\infty$ such that
\begin{displaymath}
\lim_{t\rightarrow 0}t^{-1}
\mathbb{E}\left[e^{\theta_0 A(0,t)}1_{\{X(0,t)>K\}}\right]=0\,.
\end{displaymath} 
 \item  $\Psi_{\infty,d\rightarrow t}^2(x) > 0$  and $\Psi_{\infty,t\rightarrow d}^2(x) > 0$.
\item $M (\theta)< \infty$ for all $\theta$ in a neighbourhood of 0.
\end{enumerate}
\end{assumption}
The result we obtain here is the following:
\begin{thm}\label{thm::mdsb}
Let $A$ be a stationary marked point process.
Under Assumptions \ref{ass::mdsb} and given $\alpha < \beta<1$  such that $\alpha +\beta>1$, the sequence  $\tilde{A}^N_{\alpha,\beta}$ satisfies a sample path large deviations principle in the space $\mathcal{D}_{0}$ with the scaled uniform norm $||x||_s$. This has rate $N^{\alpha+\beta -1}$, and good rate function $I_{\alpha, \beta}(x)$, where
\begin{equation}\label{eqn::mdsbrf}
I_{\alpha, \beta}(x)= \begin{cases}
\int_0^{\infty} \frac{\dot{x}^2}{2 \lambda \mathbb{E}(Y^2)} dt &\textrm{ if $x(0)=0$ and $x\in\mathscr{AC}$\,,} \\
\infty &\textrm{otherwise\,.}
\end{cases}
\end{equation}
\end{thm}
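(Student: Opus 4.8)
The plan is to mirror the proof of the small-buffer large deviations principle (Theorem~\ref{thm::ldsb}), replacing its compound-Poisson G\"artner--Ellis computation by a second-order (moderate) expansion. Since $\alpha<\beta$ the relevant window $[0,N^{\alpha-\beta}t]$ shrinks, so with overwhelming probability each source emits at most one point in it — this is what Assumption~\ref{ass::mdsb}(1) quantifies, letting us discard the rare intervals carrying more than $K$ points — and $A^{\oplus N}(0,N^{\alpha-\beta}t)$ behaves like a compound-Poisson sum of roughly $N^{1+\alpha-\beta}\lambda t$ marks. First I would compute, for any finite partition $0=t_0<t_1<\dots<t_k$ and $\theta\in\mathbb R^k$, the limit as $N\to\infty$ of $N^{1-\alpha-\beta}\log\mathbb E\exp\big(N^{\alpha+\beta-1}\sum_{i=1}^k\theta_i(\tilde A^N_{\alpha,\beta}(t_i)-\tilde A^N_{\alpha,\beta}(t_{i-1}))\big)$. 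By independence across sources and marks this reduces to analysing $N^{1+\alpha-\beta}\lambda(t_i-t_{i-1})\big(M(N^{\beta-1}\theta_i)-1\big)$ (together with the deterministic centering); since $N^{\beta-1}\theta_i\to 0$ we may Taylor-expand $M$ about $0$ — only Assumption~\ref{ass::mdsb}(3), finiteness of $M$ near $0$, is needed, because the moderate regime only probes $Y$ near its mean — the first-order term cancelling against the centering $N^{1-\beta}\lambda t$ up to a contribution of order $o(N^{\alpha+\beta-1})$, and the second-order term producing exactly $\tfrac12\lambda\mathbb E(Y^2)\sum_i\theta_i^2(t_i-t_{i-1})$. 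The G\"artner--Ellis theorem then delivers the finite-dimensional LDP at speed $N^{\alpha+\beta-1}$ with the Gaussian-increment rate function.

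Next I would lift this to a path-space statement. The Dawson--G\"artner projective-limit theorem yields an LDP along cylinder sets whose rate function is the supremum over finite partitions of the finite-dimensional ones, and a standard convex-duality computation identifies this supremum with $\int_0^\infty \dot x^2/(2\lambda\mathbb E(Y^2))\,dt$ for $x\in\mathscr{AC}$ with $x(0)=0$, and $+\infty$ otherwise. As a sanity check, this is simultaneously the short-timescale specialisation of the $R_v$-rate function of Theorem~\ref{thm::mdlb} (take $v(t)=\lambda\mathbb E(Y^2)t$, the leading short-time behaviour of any marked-point-process variance function) and the quadratic expansion near the mean of the integrand $\Omega^*(\dot x+\lambda\mathbb E(Y))$ of Theorem~\ref{thm::ldsb}, as befits a regime interpolating between cases (ii) and (iii).

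The substantive step is upgrading from cylinder sets to $\mathcal D_0$ with the scaled uniform norm. As in Theorem~\ref{thm::ldsb} I would work through the polygonal approximation $\tilde{\bar A}^N_{\alpha,\beta}$ of Definition~\ref{def:linerinterp}, which lives in a space of continuous functions where compactness is governed by Arzel\`a--Ascoli, establish exponential tightness there at speed $N^{\alpha+\beta-1}$, and transfer back by showing $\tilde A^N_{\alpha,\beta}$ and $\tilde{\bar A}^N_{\alpha,\beta}$ are exponentially equivalent in $\|\cdot\|_s$ — the discrepancy between them is a single scaled mark, controlled because $M$ is finite near $0$. Exponential tightness itself splits into a short-timescale modulus-of-continuity estimate, which follows from the same compound-Poisson bounds (Assumption~\ref{ass::mdsb}(1)) together with the second-order estimate above, and a long-timescale tail bound of the form $\mathbb P\big(\sup_{t\ge T}\big(\tilde A^N_{\alpha,\beta}(t)-\varepsilon(1+t)\big)>0\big)\le e^{-LN^{\alpha+\beta-1}}$ for arbitrarily large $L$ once $T$ is large; this is exactly what the two limits in Assumption~\ref{ass::mdsb}(2) — the moderate-deviations analogue of the $\Psi^1_\infty$ control used for Theorem~\ref{thm::ldsb} — are there to deliver, via Chernoff bounds on $A^{\oplus N}$ at intermediate and long timescales and a union bound over dyadic blocks. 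With the finite-dimensional LDP and exponential tightness in the scaled-uniform topology in hand, the inverse contraction principle (equivalently, the standard lemma that a projective LDP plus exponential tightness gives the full LDP, the rate function being good) yields Theorem~\ref{thm::mdsb}.

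I expect the main obstacle to be this last step, and within it the long-timescale tail estimate: one must verify that the $\Psi^2_{\infty,d\rightarrow t}$ and $\Psi^2_{\infty,t\rightarrow d}$ hypotheses genuinely translate into super-exponential decay at speed $N^{\alpha+\beta-1}$ uniformly over the dyadic blocks, keeping in mind that this speed is slower than the $N^\alpha$ of Theorem~\ref{thm::ldsb}, so the tail must be squeezed harder and the two orderings of the limits become relevant (one handling the regime where the timescale is sent to infinity first, the other where the moderate-deviation size is sent to zero first). By contrast, the short-timescale oscillation control and the exponential equivalence with the polygonal approximation should go through much as in the proof of Theorem~\ref{thm::ldsb}.
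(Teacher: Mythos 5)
Your proposal follows essentially the same route as the paper's proof: finite-dimensional Gärtner--Ellis computation, Dawson--Gärtner projective lift, exponential equivalence with the polygonal approximation, exponential tightness on $\mathscr{C}^T$, the long-timescale tail bound via the $\Psi^2_{\infty,d\to t}$ and $\Psi^2_{\infty,t\to d}$ hypotheses, and the inverse contraction principle. The one place your exposition elides a genuine subtlety is the finite-dimensional step: you first approximate the per-source log-MGF by the compound-Poisson form $\lambda s(M(\theta)-1)$ and \emph{then} Taylor-expand $M$, but with $\beta<1$ the approximation error $o(s)$ in the first step, multiplied by $N$ sources, is \emph{not} negligible at speed $N^{\alpha+\beta-1}$ unless one also tracks that the error carries a factor $\theta^2$ — i.e.\ the two limits $t\to 0$ and $\theta\to 0$ must be taken jointly rather than sequentially. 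The paper's Lemma~\ref{lem::pple3} does exactly this, expanding $e^x$ and controlling the second moment of $A(0,t)$ directly under Assumption~\ref{ass::mdsb}(1), and your plan would in effect need to reproduce that joint estimate; once it is in place the rest of your outline matches Lemmas~\ref{lem::finiteA}, \ref{lem::finitesp}, \ref{lem::infinitesp}, \ref{lem::lt3}, \ref{lem::ratemsb} and \ref{lem::exptmsb}. (A minor remark: since $N^{\alpha+\beta-1}$ is slower than the $N^\alpha$ of Theorem~\ref{thm::ldsb}, the tail needs to be controlled to a \emph{weaker}, not stronger, exponential order; what makes the tail estimate harder here is the heavier load $N^{\beta-1}x\to 0$ in the Chernoff bound, which is precisely what the double-limit form of $\Psi^2_\infty$ addresses.)
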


%
%
The final case we consider is (v), large deviations for fast timescales where $0<\alpha<1$ and $\beta > 1$. In this setting we need the following assumptions:
\begin{assumption}\label{ass::ldbg1}
$\,$
\begin{enumerate}
\item  There exist $\theta_0 >0$ and $K<\infty$ such that
\begin{displaymath}
\lim_{t\rightarrow 0}t^{-1}
\mathbb{E}\left[e^{\theta_0 A(0,t)}1_{\{X(0,t)>K\}}\right]=0\,.
\end{displaymath} 
\item  $\Psi^1_\infty(x) > 0$ for all $x \neq \lambda \mathbb{E}(Y)$.
\item  $M (\theta)< \infty$ for all $\theta \in \mathbb{R}$.
\end{enumerate}
\end{assumption}
Our result here is the following:
\begin{thm}\label{thm::ldbg1}
Let $A$ be a stationary marked point process.
Under Assumption \ref{ass::ldbg1} and given $0<\alpha <1$  and $\beta>1$, the sequence  $\tilde{A}^N_{\alpha,\beta}$ satisfies a sample path large deviations principle in the space $\mathcal{D}_{0}$ with the scaled uniform norm $||x||_s$. This has rate $N^{\alpha} \log N$, and good rate function $I_{\alpha, \beta}(x)$, where
\begin{equation}
    I_{\alpha, \beta}(x) = \begin{cases}
\int_0^{\beta - 1} \dot{x} dt &\textrm{ if $x(0)=0$ and $x\in\mathscr{AC}$\,,} \\
\infty &\textrm{otherwise\,.}
\end{cases}
\end{equation}
\end{thm}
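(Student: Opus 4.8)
\noindent\emph{Outline of proof.} The plan is to follow the architecture used for the other short-timescale, small-buffer regimes --- cases (ii) and (iv), and ultimately \cite{Cruise2009a}: (1) use $\alpha<\beta$ to reduce the problem to a ``Poissonised'' picture on vanishing real-time windows; (2) establish a finite-dimensional large deviations principle for the increments of $\tilde A^N_{\alpha,\beta}$; (3) derive exponential tightness and lift to a sample-path statement in $(\mathcal D_0,\|\cdot\|_s)$; and (4) identify the limiting rate function with $I_{\alpha,\beta}$ and check goodness. Two features differ from cases (ii) and (iv): the drift term $N^{1-\beta}\lambda t$ in \eqref{eqn::scaling} now vanishes, and --- the genuinely new point --- the \emph{speed} is $N^\alpha\log N$, because with a vanishing load the probability that a given source contributes to a window of scaled length $\delta$ (real length $N^{\alpha-\beta}\delta$) is only polynomially small, of order $N^{\alpha-\beta}$, and the $\log N$ arises from the competition between this probability and the $\binom{N}{k}$ ways of choosing which $k$ sources contribute. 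Concretely, in Step~1 I would use Assumption \ref{ass::ldbg1}(1) to show that on these shrinking windows each source emits at most one point with a probability whose complement is superexponentially small at speed $N^\alpha\log N$, that $\Lambda_{N^{\alpha-\beta}t}(\phi)=\lambda N^{\alpha-\beta}t\,(M(\phi)-1)+o(N^{\alpha-\beta})$ uniformly on compacts \emph{and} for $\phi$ of order $\log N$ (this is exactly where Assumption \ref{ass::ldbg1}(3), $M<\infty$ on all of $\mathbb{R}$, is needed), and that increments of $A^{\oplus N}$ over disjoint windows are independent up to superexponentially small corrections.

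\textbf{The key computation (Step~2).} For a single increment $\Delta=\tilde A^N_{\alpha,\beta}(t)-\tilde A^N_{\alpha,\beta}(s)$ I would compute the scaled cumulant generating function
\begin{displaymath}
\Lambda_N(u)=\frac{1}{N^\alpha\log N}\log\mathbb{E}\left[e^{u N^\alpha \Delta\log N}\right]=\frac{N\,\Lambda_{N^{\alpha-\beta}(t-s)}(u\log N)}{N^\alpha\log N}+o(1),
\end{displaymath}
where the $o(1)$ absorbs the (now vanishing) drift, and substitute the Step~1 expansion so that the numerator is asymptotically $\lambda N^{1+\alpha-\beta}(t-s)\,M(u\log N)$. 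Since $\log M(u\log N)$ grows essentially linearly in $u\log N$ (the slope being the right endpoint of the support of $Y$), $M(u\log N)$ is a power of $N$, and one finds that $\Lambda_N(u)$ converges to a \emph{degenerate} limit: identically $0$ for $u$ below a threshold and $+\infty$ for $u$ above it, the threshold being $\beta-1$ divided by that slope. Its Legendre transform is linear on $a\ge 0$ with slope equal to the threshold, and $+\infty$ on $a<0$ --- precisely the one-dimensional object integrated up in $I_{\alpha,\beta}$ (with $+\infty$ on decreasing coordinates reflecting monotonicity of arrivals). The G\"artner--Ellis theorem \cite[Theorem 2.3.6]{Dembo1998} then gives an LDP for $\Delta$ at speed $N^\alpha\log N$; the large-deviations \emph{lower} bound at the kink of the degenerate limiting function is not covered by the smooth version of G\"artner--Ellis and must be supplied by hand via an exponential change of measure that tilts the number of contributing sources (and, at second order, their marks). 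Combined with the asymptotic independence from Step~1, this yields an LDP for the vector of increments over any finite mesh $0=t_0<\dots<t_m=T$ whose rate function is the corresponding finite-dimensional projection of $I_{\alpha,\beta}$.

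\textbf{Lifting to sample paths (Steps~3--4).} As in cases (ii) and (iv), exponential tightness is not assumed and must be derived from Assumptions \ref{ass::ldbg1}(1),(3): a modulus-of-continuity estimate of the form $\mathbb{P}(\sup_{s\le r\le s+\eta}|\tilde A^N_{\alpha,\beta}(r)-\tilde A^N_{\alpha,\beta}(s)|\ge\varepsilon)\le\exp(-c_\eta\varepsilon N^\alpha\log N)$ with $c_\eta\to\infty$ as $\eta\to 0$, via Chernoff bounds on the Poissonised increments using finiteness of $M$ everywhere, while the large-$t$ behaviour is controlled by the $(1+t)$-weight built into $\|\cdot\|_s$ together with Assumption \ref{ass::ldbg1}(2) (which forbids the arrival process sustaining a rate other than $\lambda\mathbb{E}(Y)$ over long real-time windows). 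Since finite-dimensional evaluations are continuous and separate points of $\mathcal D_0$, the Dawson--G\"artner projective-limit theorem \cite[Theorem 4.6.1]{Dembo1998} applied to the finite-dimensional LDPs of Step~2, upgraded using exponential tightness \cite[Corollary 4.2.6]{Dembo1998}, gives the full sample-path LDP in $(\mathcal D_0,\|\cdot\|_s)$. The projective-limit rate function is identified with $I_{\alpha,\beta}$ on absolutely continuous paths by matching finite-dimensional projections (and is $+\infty$ off $\mathscr{AC}$ with $x(0)=0$); the matching lower bound is the explicit construction realising a target path by letting, in successive windows, $\approx N^\alpha\dot x(t)$ fresh sources each contribute a near-maximal mark, at cost of order $(\beta-1)\log N$ per contributing source. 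Goodness of $I_{\alpha,\beta}$ in $\|\cdot\|_s$ follows because finiteness forces $x$ to be non-decreasing with bounded terminal value, which with the $\mathcal D_0$ growth constraint confines sublevel sets to a uniformly bounded, scaled-equicontinuous family, hence precompact; lower semicontinuity is inherited from the supremum over finite-dimensional projections.

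\textbf{Main obstacle.} The one genuinely delicate point is Step~2, i.e.\ extracting the $\log N$ speed rigorously. In every previously treated many-sources regime the scaled cumulant generating function of the increments has a finite, non-degenerate limit at a \emph{polynomial} speed, so G\"artner--Ellis applies in its standard smooth form; here it does not --- at speed $N^\alpha$ the limit is identically zero, and at speed $N^\alpha\log N$ it is degenerate (zero below a threshold, $+\infty$ above). The work is therefore (i) to separate cleanly the entropic $k\log N$ contribution of $\binom{N}{k}$ from the polynomially small per-source probability $\asymp N^{\alpha-\beta}$, and to do so uniformly over the continuum of timescales entering the sample-path statement; and (ii) to prove the large-deviations lower bound at the kink of the degenerate limiting log-moment generating function by an explicit tilting argument. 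This is also precisely where Assumption \ref{ass::ldbg1}(3) is used in an essential --- not merely convenient --- way: it is what fixes the speed at $N^\alpha\log N$ rather than $N^\alpha$ (were $M$ to explode at some finite $\theta$, the speed would drop), and hence what makes $I_{\alpha,\beta}$ the correct rate function.
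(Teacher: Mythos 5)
Your proof follows the same architecture as the paper: the paper routes case (v) through Lemma~\ref{lem::finiteA} (finite-dimensional LDP via the scaled CGF and G\"artner--Ellis), Lemma~\ref{lem::finitesp} (Dawson--G\"artner plus inverse contraction on $[0,T]$), Lemma~\ref{lem::infinitesp} (extension to $[0,\infty)$), Lemma~\ref{lemma: time-bounding for beta > 1} (tail control), Lemma~\ref{lem::ratebg1} (computation of $\Omega_{\alpha,\beta}$ and $I_{\alpha,\beta}$), and Lemma~\ref{lem::exptbg1} (exponential tightness). Your diagnosis of the $N^\alpha\log N$ speed as the balance between the $\binom{N}{k}\sim N^k$ combinatorial factor and the polynomially small per-source probability $\asymp N^{\alpha-\beta}$ on a window of scaled length $\delta$, and your computation that $\Omega_{\alpha,\beta}$ is degenerate (zero below a threshold, infinite above), both match Lemma~\ref{lem::ratebg1}.

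Your ``main obstacle'' paragraph puts its finger on a genuine issue that the paper glosses over. Lemma~\ref{lem::finiteA} invokes G\"artner--Ellis under a steepness hypothesis (its hypothesis~(1)); but the degenerate $\Omega_{\alpha,\beta}(\theta,j,T)$ of Lemma~\ref{lem::ratebg1} is identically zero on the interior $\{\theta_j<\beta-1\ \forall j\}$ of its effective domain, so its gradient there is zero and it is not steep. The standard G\"artner--Ellis lower bound then holds only on exposed points of $I_{j,T}$, and for a Legendre transform that is linear on $\{a\ge0\}$ with a single slope there are no useful exposed points in the interior of the effective domain. The lower bound therefore has to be supplied directly, e.g.\ by the explicit change-of-measure/construction you sketch (promoting $\approx N^\alpha\dot x$ sources per window at $\asymp(\beta-1)\log N$ cost each). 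The paper's proof, as written, appeals to Lemma~\ref{lem::finiteA} without noting that its steepness hypothesis fails here.

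There is, however, an unresolved inconsistency in your Step~2. You correctly compute that $\log M(u\log N)\sim y^\ast u\log N$ (with $y^\ast$ the essential supremum of $Y$, when it is finite), so the threshold in $\Omega_{\alpha,\beta}$ is $(\beta-1)/y^\ast$ and the resulting slope of the one-dimensional rate function is $(\beta-1)/y^\ast$. You then assert that this is ``precisely the one-dimensional object integrated up in $I_{\alpha,\beta}$,'' but the theorem's stated rate function has slope $\beta-1$: the two agree only when $y^\ast=1$. Note that Lemma~\ref{lem::ratebg1} in the paper writes the Poissonised log MGF as $\lambda N^{\alpha-\beta}(t_j-t_{j-1})(e^{\theta_j\log N}-1)$ rather than $\lambda N^{\alpha-\beta}(t_j-t_{j-1})(M(\theta_j\log N)-1)$, i.e.\ treats the process as unmarked; your compound-Poisson treatment is the correct one, but it does not reproduce the stated $I_{\alpha,\beta}$ unless you normalise $y^\ast=1$, carry the factor $1/y^\ast$ through, or explicitly restrict to $Y\equiv1$. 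You should also flag that Assumption~\ref{ass::ldbg1}(3) ($M<\infty$ everywhere) does not rule out unbounded marks; for those, $\log M(u\log N)/(u\log N)\to\infty$, the threshold collapses to $0$, and your derivation (and the paper's) breaks down entirely.
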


%
To understand the assumptions we impose on the traffic process we examine how they lead to properties of the processes required in the proof. The systems studied in this paper are already stable by virtue of the nature of the scaling. We also require the amount of traffic produced in a small time period to be small, and this is guaranteed by the first of each of the assumptions. In addition we require the probability of a source sending at a rate greater than the service rate of the queue for long periods to be small. Often when considering large deviations limits this is dealt with by examining whether $\lim_{t \to \infty}\Psi(C,t)$ is non-zero. However we consider a scaled form $\Psi_{\infty}^{1}(x)$ which is a weaker condition.

It is worth noting that for three of these results (Theorems \ref{thm::mdlb}, \ref{thm::ldsb} and \ref{thm::mdsb}) the associated rate functions are the same as for previously studied stochastic processes. Firstly, in Theorem \ref{thm::mdlb} the rate function in (\ref{eqn::mdlbrf}) is that of a stationary Gaussian process with variance function $v$, and of the generalized Schilder's theorem \cite[Theorem 5.2.3]{Dembo1998}. In comparison to this, we have that in the small buffer large deviations case (Theorem \ref{thm::ldsb}) the rate function (\ref{eqn::ldsbrf}) is that of a marked Poisson process with mean rate $\lambda$ and independent marks with distribution $Y$.  Finally, for the moderate deviations small buffer case (Theorem \ref{thm::mdsb}) we find that the rate function (\ref{eqn::mdsbrf}) is that of a Brownian motion with variance parameter $\lambda \mathbb{E}(Y^2)$. In these latter two cases the rate function only depends on the mean rate of the point process and either the moment generating function of the marks or the second moment of the marks. This allows easy calculations in many circumstances, and these calculations are robust to changes in the underlying point process. This is useful from a modelling perspective, as we only need to estimate the mean rate of the point process and properties of the marks to provide useful estimates of tail probabilities. 

The move from large deviations to moderate deviations requires a strengthening of the assumption on the long run behaviour, but we are able to weaken the assumption on the moment generating functions, as in the limit the second moment dominates. In addition, for the large buffer cases the conditions on the log moment generating functions of the processes (our Assumptions \ref{ass::ldlb}.3 and \ref{ass::mdlb}.3) imply the associated condition on the moment generating function of the marks $M(\theta)$. Finally, in the large buffer cases we have an extra condition which guarantees the processes are exponentially tight on finite timescales but we are able to drop this in the small buffer setting. This is because the limiting process is relatively insensitive to the original process in the small buffer case.

\section{Proofs of theorems} \label{sec::proofs}

In this section we prove the results we stated in Section \ref{sec::results}. These proofs are split into three sections: some preliminary lemmas, a framework for the large deviations results, and a final step combining these. Section \ref{sec::lemmas} contains a series of lemmas on the marked point processes in the various scalings we consider. This includes two limits for the log moment generating function, and also bounds for behaviour at long timescales in the various scenarios. In Section \ref{sec::framework} we set up a general framework of lemmas for proving our large deviations results. We begin by proving a large deviations principle over a fixed time interval and then extending this to infinite time. Finally, for each of the scalings of interest we show how to apply the previous lemmas to obtain the desired results.

\subsection{Preliminary lemmas}\label{sec::lemmas}

\subsubsection{Lemmas for large deviation results (\texorpdfstring{$\beta \geq 1$}{})}

\begin{lemma} \label{lem::pple}
Suppose $A$ is  a marked point process  which obeys Assumptions \ref{ass::ldlb} or \ref{ass::ldsb}. Given $0=j_0<j_1<j_2< \cdots <j_{n-1}<j_n = 1$ and $t \in \mathbb{R}$, define the vector $\mathbf{A}^{\mathbf{j},t}$ by $\mathbf{A}^{\mathbf{j},t}_i = A(t j_{i-1}, t j_i)$. Then there exists $\mathbf{\theta}^* >0$ such that, uniformly for all $\theta\in\{ \theta \in \mathbb{R}^n: \theta_i \in [0,\theta^*]\}$,
\begin{equation}\label{eqn::explimit2}
\lim_{t \rightarrow 0} \frac{1}{t} \log \mathbb{E}\left[ e^{\langle\theta, \mathbf{A}^{\mathbf{j},t}\rangle} \right]=\lambda \sum_{i=1}^n (j_{i}-j_{i-1})(M(\theta_i) -1)\,.
\end{equation}
\end{lemma}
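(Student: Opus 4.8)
The plan is to exploit the fact that over a vanishingly small time window a simple stationary point process behaves like a Poisson process. By the orderliness of a simple stationary point process (Korolyuk's theorem; see \cite[Chapter 3]{Daley2008}), as $s\to0$ one has $\mathbb{P}(X(0,s)=0)=1-\lambda s+o(s)$, $\mathbb{P}(X(0,s)=1)=\lambda s+o(s)$ and $\mathbb{P}(X(0,s)\geq2)=o(s)$. Since $e^{\langle\theta,\mathbf{A}^{\mathbf{j},t}\rangle}=\prod_{i=1}^{n}e^{\theta_i A(tj_{i-1},tj_i)}$ is a functional of the marked process on $(0,t]$, I would split $\mathbb{E}[e^{\langle\theta,\mathbf{A}^{\mathbf{j},t}\rangle}]$ according to whether $X(0,t)$ equals $0$, equals $1$, lies in $\{2,\dots,K\}$, or exceeds $K$, taking $\theta^*:=\theta_0$ with $\theta_0,K$ as in the first of Assumptions \ref{ass::ldlb} / \ref{ass::ldsb}. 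With this choice $\langle\theta,\mathbf{A}^{\mathbf{j},t}\rangle\leq\theta_0 A(0,t)$ for every admissible $\theta$ (the increments being nonnegative), and $M(\theta^*)<\infty$ (immediate from Assumption \ref{ass::ldsb}.3; for Assumption \ref{ass::ldlb}.3, condition on $\{X(0,t)=1\}$ inside $\Lambda_t(\theta_0)<\infty$).

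I would then evaluate the four pieces. On $\{X(0,t)=0\}$ every increment vanishes, contributing $\mathbb{P}(X(0,t)=0)=1-\lambda t+o(t)$. On $\{X(0,t)=1\}$ the single point lies in exactly one block $(tj_{i-1},tj_i]$, whence $\langle\theta,\mathbf{A}^{\mathbf{j},t}\rangle=\theta_i Y$ with $Y$ the associated mark (independent of the point configuration); conditioning on which block contains the point gives a contribution
\[
\sum_{i=1}^{n} M(\theta_i)\,\mathbb{P}\bigl(X(0,t)=1,\ \text{the point lies in }(tj_{i-1},tj_i]\bigr),
\]
and stationarity together with orderliness applied to each subinterval yields $\mathbb{P}(X(0,t)=1,\ \text{point in block }i)=\lambda t(j_i-j_{i-1})+o(t)$, so this piece equals $\lambda t\sum_{i=1}^{n}M(\theta_i)(j_i-j_{i-1})+o(t)$. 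The piece on $\{2\leq X(0,t)\leq K\}$ is bounded by $M(\theta^*)^K\,\mathbb{P}(X(0,t)\geq2)=o(t)$, and the piece on $\{X(0,t)>K\}$ is bounded by $\mathbb{E}[e^{\theta_0 A(0,t)}1_{\{X(0,t)>K\}}]=o(t)$ by the first assumption. Summing and using $\sum_{i}(j_i-j_{i-1})=1$ gives
\[
\mathbb{E}\bigl[e^{\langle\theta,\mathbf{A}^{\mathbf{j},t}\rangle}\bigr]=1+\lambda t\sum_{i=1}^{n}(M(\theta_i)-1)(j_i-j_{i-1})+o(t);
\]
taking logarithms (legitimate since the left-hand side is $\geq1-\lambda t+o(t)>0$ for small $t$), dividing by $t$ and letting $t\to0$ then gives (\ref{eqn::explimit2}).

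Uniformity over $\{\theta\in\mathbb{R}^{n}:\theta_i\in[0,\theta^*]\}$ is essentially automatic: each error term above is either $\theta$-free, or equals a $\theta$-free $o(t)$ multiplied by a constant ($M(\theta^*)$ or $M(\theta^*)^K$) bounding the $\theta$-dependent factors, while the $\log(1+x)=x+O(x^2)$ correction is $O(t^2)$ uniformly since $x=O(t)$ uniformly on the box. Hence $\sup_{\theta}\bigl|t^{-1}\log\mathbb{E}[e^{\langle\theta,\mathbf{A}^{\mathbf{j},t}\rangle}]-\lambda\sum_{i}(M(\theta_i)-1)(j_i-j_{i-1})\bigr|\to0$.

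The step I expect to be the main obstacle is the estimate $\mathbb{P}(X(0,t)=1,\ \text{point in block }i)=\lambda t(j_i-j_{i-1})+o(t)$ for a general simple stationary point process. The cleanest route is: $\mathbb{P}(X(tj_{i-1},tj_i)\geq1)=\lambda t(j_i-j_{i-1})+o(t)$ by stationarity and orderliness, while the discrepancy between the event $\{X(tj_{i-1},tj_i)\geq1\}$ and the event $\{X(0,t)=1$ with that point in block $i\}$ is contained in $\{X(0,t)\geq2\}$, of probability $o(t)$; since there are only finitely many blocks the various $o(t)$'s combine harmlessly. Everything else is bookkeeping, plus the minor verification that Assumptions \ref{ass::ldlb}.3 / \ref{ass::ldsb}.3 do supply $M(\theta_0)<\infty$.
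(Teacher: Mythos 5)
Your proposal is correct and follows essentially the same route as the paper's proof: both decompose $\mathbb{E}[e^{\langle\theta,\mathbf{A}^{\mathbf{j},t}\rangle}]$ by conditioning on $X(0,t)\in\{0\},\{1\},\{2,\dots,K\},\{>K\}$, use orderliness for the first three pieces, the Assumption-1 tail bound for the last, and then $\log(1+x)=x+O(x^2)$. Your handling of the $\{X(0,t)=1\}$ piece (computing $\mathbb{P}(X(0,t)=1,\text{point in block }i)$ directly via $\mathbb{P}(X(tj_{i-1},tj_i)\geq1)$ and noting the symmetric difference lies in $\{X(0,t)\geq2\}$) is in fact a slightly cleaner formulation of the step the paper carries out by computing $\lim_{t\to0}\mathbb{P}(X(tj_{i-1},tj_i)=1\mid X(0,t)=1)=j_i-j_{i-1}$, but the two computations are equivalent.
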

\begin{proof}
Firstly, we have $\mathbb{P}(X(0,t)=1) =\lambda t +o(t)$ and $\mathbb{P}(X(0,t) \geq 2) =o(t)$. These follow directly from
\cite[Propositions 3.3.I, 3.3.IV and 3.3.V]{Daley2008}. Now let
$p_t(k) = \mathbb{P}(X(0,t)=k)$ for $k\in \mathbb{Z^+}$. As we choose
$\theta^*>0$ and $K<\infty$ to be such that the  Assumptions \ref{ass::ldlb} or \ref{ass::ldsb} are
satisfied with $\theta^* \leq \theta_0$ and $M(\theta^*)<\infty$,
\begin{multline*}
\mathbb{E}\left[ e^{\langle\theta, \mathbf{A}^{\mathbf{j},t}\rangle} \right] -1 =p_t(0) +p_t(1)\mathbb{E}\left[ e^{\langle\theta, \mathbf{A}^{\mathbf{j},t}\rangle} | X(0,t)=1 \right]\\ 
+ \sum_{k=2}^K p_t(k) \mathbb{E}\left[ e^{\langle\theta, \mathbf{A}^{\mathbf{j},t}\rangle} | X(0,t)=k \right] +\mathbb{E}\left[ e^{\langle\theta, \mathbf{A}^{\mathbf{j},t}\rangle} 1_{X(0,t) >K} \right] -1\,.
\end{multline*}
Since $\mathbb{E}\left[ e^{\langle\theta, \mathbf{A}^{\mathbf{j},t}\rangle} |
X(0,t)=k \right] $ has no dependence on $t$ (as the $Y_i$ are independent of the point process), we use the previous probability
approximations to get
\[
\mathbb{E}\left[ e^{\langle\theta, \mathbf{A}^{\mathbf{j},t}\rangle} \right] -1 
=-\lambda t + \lambda t\mathbb{E}\left[ e^{\langle\theta, \mathbf{A}^{\mathbf{j},t}\rangle} | X(0,t)=1 \right] +\mathbb{E}\left[ e^{\langle\theta, \mathbf{A}^{\mathbf{j},t}\rangle} 1_{X(0,t) >K} \right]+o(t)\,.
\]
Consider $\mathbb{E}\left[ e^{\langle\theta, \mathbf{A}^{\mathbf{j},t}\rangle} | X(0,t)=1 \right]$. By conditioning the sub-interval containing the point, we get
\begin{displaymath}
\mathbb{E}\left[ e^{\langle\theta, \mathbf{A}^{\mathbf{j},t}\rangle} | X(0,t)=1 \right] = \sum_{i=1}^n \mathbb{P}(X( t j_{i-1},t j_i)=1 | X(0,t)=1) M(\theta_i)\,.
\end{displaymath}
Also,
\begin{displaymath}
0\leq \mathbb{E}\left[ e^{\langle\theta, \mathbf{A}^{\mathbf{j},t}\rangle} 1_{X(0,t) >K} \right] \leq \mathbb{E}\left[ e^{\theta^* A(0,t)} 1_{X(0,t) >K} \right] \leq \mathbb{E}\left[ e^{\theta_0 A(0,t)} 1_{X(0,t) >K} \right] \,.
\end{displaymath}
Using these and our assumptions, we get
\[
\lim_{t\rightarrow 0} \frac{1}{t} \left( \mathbb{E}\left[ e^{\langle\theta, \mathbf{A}^{\mathbf{j},t}\rangle} \right] -1 \right)
= -\lambda+\lambda \lim_{t\rightarrow 0} \left(\sum_{i=1}^n \mathbb{P}(X( t j_{i-1},t j_i)=1 | X(0,t)=1) M(\theta_i) \right)\,,
\]
and
\begin{displaymath}
\lim_{t\rightarrow 0} \frac{1}{t} \left( \mathbb{E}\left[ e^{\langle\theta, \mathbf{A}^{\mathbf{j},t}\rangle} \right] -1 \right)^2 =0\,.
\end{displaymath}
We now need to find $\lim_{t\rightarrow 0} \mathbb{P}(X( t j_{i-1},t j_i)=1 | X(0,t)=1)$.  Firstly,
\begin{displaymath}
\mathbb{P}(X( 0,t/2)=1 | X(0,t)=1) = \frac{\mathbb{P}(X( 0,t/2)=1  , X( t/2,t)=0)}{\mathbb{P}(X( 0,t)=1)}\,.
\end{displaymath}
Since \mbox{$\mathbb{P}(X(0,t/2)=1)= \lambda t/2 +o(t)$,} and $\mathbb{P}(X(t/2,t)=1)= \lambda t /2+o(t/2)$, we have
\begin{displaymath}
\lim_{t\rightarrow 0} \mathbb{P}(X( 0,t/2)=1 | X(0,t)=1) = \lim_{t\rightarrow 0} \frac{\lambda\frac{t}{2} +o(t)}{\lambda t +o(t)} =\frac{1}{2}\,.
\end{displaymath}
A similar argument gives
\begin{equation}\label{eqn::shortprob}
\lim_{t\rightarrow 0}\mathbb{P}(X( t j_{i-1},t j_i)=1 | X(0,t)=1) = j_i -j_{i-1}\,.
\end{equation}
Thus,
\begin{equation}\label{eq::explimit}
\lim_{t\rightarrow 0} \frac{1}{t} \left( \mathbb{E}\left[ e^{\langle\theta, \mathbf{A}^{\mathbf{j},t}\rangle} \right] -1 \right) = -\lambda+\lambda  \left(\sum_{i=1}^n (j_i- j_{i-1}) M(\theta_i) \right)\,.
\end{equation}
Finally, we have that $x-x^2/2 \leq \log(1+x) \leq x$ for all $x\geq 0$, which gives the desired result.
\end{proof}

\begin{lemma}\label{lem::lt2}
Suppose $A$ is a marked point process such that $\Psi^1_\infty(x)>0$ for $x>\lambda\mathbb{E}(Y)$. Given $B>0$ and $d \in \mathbb{R}^+$, there exists $t_0 <\infty$ such that
\begin{displaymath}
\limsup_{N \rightarrow \infty} \frac{1}{N^{\alpha}} \log
\mathbb{P}\left(\sup_{t>(N^\alpha t_0)/N} [A^{\oplus N}(0,t)-Nxt]\geq N^\alpha B \right)
\leq -d.
\end{displaymath}
\end{lemma}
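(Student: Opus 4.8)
The plan is to peel the time half-line $(N^{\alpha-1}t_0,\infty)$ into geometrically growing blocks, estimate each block by a Chernoff bound for the $N$-fold superposition, and sum the resulting series; the hypothesis $\Psi^1_\infty(x)>0$ will be used only to control the blocks at large timescales, where the per-block Chernoff exponent grows merely logarithmically. Concretely, fix $\epsilon>0$ small enough that $x_\epsilon:=x/(1+\epsilon)>\lambda\mathbb{E}(Y)$ (possible since $x>\lambda\mathbb{E}(Y)$), put $r_k=(1+\epsilon)^kN^{\alpha-1}t_0$, and decompose $(N^{\alpha-1}t_0,\infty)=\bigcup_{k\ge 0}(r_k,r_{k+1}]$. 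Since $t\mapsto A^{\oplus N}(0,t)$ is nondecreasing while $t\mapsto Nxt$ is increasing,
\begin{displaymath}
\Big\{\sup_{r_k<t\le r_{k+1}}\big[A^{\oplus N}(0,t)-Nxt\big]\ge N^\alpha B\Big\}\subseteq\big\{A^{\oplus N}(0,r_{k+1})\ge Nxr_k+N^\alpha B\big\},
\end{displaymath}
and an elementary rearrangement gives $Nxr_k+N^\alpha B=Nr_{k+1}y_k$ with $y_k:=x_\epsilon+B\,((1+\epsilon)^{k+1}t_0)^{-1}\ge x_\epsilon$. By independence of the sources and the definition of $\Psi$, the Chernoff inequality gives $\mathbb{P}(A^{\oplus N}(0,r_{k+1})\ge Nr_{k+1}y_k)\le e^{-Nr_{k+1}\Psi(y_k,r_{k+1})}$, and since $\Psi(\cdot,t)$ is nondecreasing on $[\lambda\mathbb{E}(Y),\infty)$ this is at most $e^{-Nr_{k+1}\Psi(x_\epsilon,r_{k+1})}$. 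Writing $s_k:=r_{k+1}$, so $Ns_k=(1+\epsilon)^{k+1}t_0N^\alpha\ge(1+\epsilon)t_0N^\alpha$, a union bound reduces the claim to bounding $\sum_{k\ge 0}e^{-Ns_k\Psi(x_\epsilon,s_k)}$.

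I would then bound the exponent $Ns_k\Psi(x_\epsilon,s_k)$ in two regimes. Because $\Psi^1_\infty(x_\epsilon)>0$ there are $a>0$ and $T_*\ge e$ with $t\Psi(x_\epsilon,t)\ge a\log t$ for all $t\ge T_*$; this is the only place the hypothesis enters, and it is why one needs $\Psi^1_\infty$ rather than merely $\lim_{t\to\infty}\Psi(x_\epsilon,t)>0$. For the blocks with $s_k<T_*$ — there are $O(\log N)$ of these, and only finitely many when $\alpha=1$ — I would use $\Psi(x_\epsilon,s_k)\ge\psi_*:=\inf_{0<t\le T_*}\Psi(x_\epsilon,t)$, which is strictly positive: from $\Psi(x_\epsilon,t)\ge\theta_1 x_\epsilon-\Lambda_t(\theta_1)/t$, choosing $\theta_1>0$ small with $\theta_1 x_\epsilon>\lambda(M(\theta_1)-1)$ (possible since $x_\epsilon>\lambda\mathbb{E}(Y)=\lambda M'(0)$), together with $\Lambda_t(\theta_1)/t\to\lambda(M(\theta_1)-1)$ as $t\to 0$ (Lemma \ref{lem::pple} with $n=1$) and lower semicontinuity of $t\mapsto\Psi(x_\epsilon,t)$, one obtains $\psi_*>0$. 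For the blocks with $s_k\ge T_*$ I would use $Ns_k\Psi(x_\epsilon,s_k)\ge Na\log s_k$ and substitute $\log s_k=(k+1)\log(1+\epsilon)+(\alpha-1)\log N+\log t_0$.

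It then remains to choose $t_0$ large (depending on $d$, $B$, $\epsilon$ and on $a$, $T_*$, $\psi_*$) and to sum. For the short blocks, $Ns_k\Psi(x_\epsilon,s_k)\ge(1+\epsilon)t_0\psi_*N^\alpha\ge(d+1)N^\alpha$ once $t_0$ is large, so they contribute at most $O(\log N)\,e^{-(d+1)N^\alpha}$. For the long blocks with $\alpha<1$: using $s_k\ge T_*$ and that $k\mapsto\log s_k$ has slope $\log(1+\epsilon)$, the sum over such $k$ is at most $2e^{-Na\log T_*}$, which is $\le e^{-dN^\alpha}$ for $N$ large since $N^{1-\alpha}a\log T_*\to\infty$. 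When $\alpha=1$, $s_k=(1+\epsilon)^{k+1}t_0$ is independent of $N$, and $Na\log s_k=Na(k+1)\log(1+\epsilon)+Na\log t_0\ge k+dN$ once $t_0\ge e^{d/a}$ and $N$ is large, so the long blocks contribute at most $\sum_{k\ge 0}e^{-dN-k}\le 2e^{-dN}$. Altogether $\mathbb{P}(\cdots)\le C_N e^{-dN^\alpha}$ with $\log C_N=o(N^\alpha)$; taking $N^{-\alpha}\log(\cdot)$ and letting $N\to\infty$ yields the bound $-d$.

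The step I expect to be the main obstacle is precisely this last reconciliation: the supremum runs over timescales from order $N^{\alpha-1}$ up to $\infty$, and the per-block Chernoff exponent behaves very differently at the two ends — it is only logarithmic in $s_k$ at large timescales (so only the weaker $\Psi^1_\infty$ condition is available there), and it is a fixed positive constant times the small block-mass $\asymp N^\alpha$ at small timescales (so the cutoff $t_0$ must be taken large as a function of $d$) — and one must produce a single cutoff $t_0$ and geometric ratio $1+\epsilon$ that push every one of the infinitely many blocks below $e^{-dN^\alpha}$ at once. The subsidiary technical point is the strict positivity of $\Psi(x_\epsilon,\cdot)$ on $(0,T_*]$, which relies on the small-time behaviour of $\Lambda_t$ supplied by Lemma \ref{lem::pple}.
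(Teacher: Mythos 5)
Your proof is correct and rests on the same engine as the paper's: decompose the time half-line into blocks, apply the Chernoff bound per block using independence of the $N$ sources, and control the resulting sum via a two-regime estimate on $\Psi$ (bounded below by a positive constant at short timescales, and satisfying $t\Psi(x_\epsilon,t)\ge a\log t$ at long timescales, which is exactly what $\Psi^1_\infty>0$ buys). The material differences are in the execution. The paper uses an arithmetic partition $t_l=sl$ (after rescaling time via $F^{\oplus N}(0,t)=A^{\oplus N}(0,t/N)$, so that block index $l$ up to about $N^{1-\alpha}$ lies in the ``small-$t$'' regime for $\Psi$), and cites extensions of Lemma~5 and Corollary~6 of Cao and Ramanan \cite{CaoRamanan2002} for the two-regime bound on $\Psi$; you use a geometric partition $r_k=(1+\epsilon)^kN^{\alpha-1}t_0$ and give a self-contained positivity argument for $\Psi(x_\epsilon,\cdot)$ near zero by feeding Lemma~\ref{lem::pple} (with $n=1$) into a one-sided Legendre bound, reserving the hypothesis $\Psi^1_\infty>0$ for the long-timescale tail. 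The geometric decomposition makes the tail series a clean geometric series with ratio $(1+\epsilon)^{-Na}$ and isolates the $\alpha=1$ versus $\alpha<1$ cases transparently, at the mild cost of having to verify the algebraic identity $Nxr_k+N^\alpha B=Nr_{k+1}y_k$. Your self-contained treatment of $\psi_*>0$ is a nice addition; the only slightly underdeveloped point is the invocation of lower semicontinuity of $t\mapsto\Psi(x_\epsilon,t)$ on $(0,T_*]$, which deserves a line noting that each map $t\mapsto\theta x_\epsilon - t^{-1}\Lambda_t(\theta)$ is continuous (by continuity of $t\mapsto\Lambda_t(\theta)$ for a simple stationary point process) so the supremum over $\theta$ is lower semicontinuous, and extending it to $t=0$ by its positive limit gives a positive minimum over the compact interval.
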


\begin{proof}
We use the scaling $F^{\oplus N}(0,t) = A^{\oplus N}(0,t/N)$. This gives 
 $$\sup_{t>(N^\alpha t_0)/N} [A^{\oplus N}(0,t)-Nxt] =\sup_{t>N^\alpha  t_0} [F^{\oplus N}(0,t)-xt]\,.$$
Hence
\begin{align}
\nonumber\mathbb{P}\left(\sup_{t>(N^\alpha t_0)/N}[A^{\oplus N}(0,t)-Nxt]\geq N^{\alpha}B\right)
&=\mathbb{P}\left(\sup_{t>N^\alpha t_0} [F^{\oplus N}(0,t)-xt]\geq N^{\alpha}B\right) 
\\ &\leq \mathbb{P}\left(\sup_{t>N^\alpha t_0} [F^{\oplus N}(0,t)-xt]\geq B\right)\,.\label{scaling1}
\end{align}

We choose $\delta \in (0,B)$ such that $s= \delta x^{-1} <1$.
For $l \in \mathbb{Z}^+$, let $t_l = s l$ and $I_l =
[t_l, t_{l+1})$. Since $F^{\oplus N}(0,t)$ is non-decreasing
in $t$ and $B-\delta >0$ we get
\begin{align*}
\mathbb{P} \left(\sup_{t\in [t_l , t_{l+1} ) } [F^{\oplus N}(0,t)-xt]\geq B\right) & \leq \mathbb{P}(F^{\oplus N}(0,t_{l+1})> B +xt_l) \\
&=\mathbb{P}(F^{\oplus N}(0,t_{l+1})> B -\delta +xt_{l+1}) \\ &\leq \mathbb{P}(F^{\oplus N}(0,t_{l+1})> xt_{l+1})\,.
\end{align*}
We can now use the Chernoff bound to see that, for $\theta\geq 0$,
\begin{displaymath}
\mathbb{P}(F^{\oplus N}(0,t_{l})> xt_{l}) \leq e^{-xt_l \theta} \mathbb{E}[e^{\theta F^{\oplus N}(0,t_{l})}]
=e^{-t_l \left( x \theta -N t_l^{-1} \log \mathbb{E}[e^{\theta A(0,t_{l}/N)}]\right) }\,.
\end{displaymath}
Taking the infimum over $\theta \geq 0$ we get $\mathbb{P}(F^{\oplus N}(0,t_{l})> xt_{l}) \leq e^{-t_l \Psi(x,N^{-1} t_l)}$.

Given $t_0\in (s^{-1}, \infty)$, let $L_N \in \mathbb{Z}^+$ be such
that $N^\alpha t_0 \in [t_{L_N} ,t_{L_N +1})$.  Let $N_0 = \inf \{ N: N\tau >L_N \}$.  For $\tau \in (0, \infty)$ and all $N>N_0$, we have that (\ref{scaling1}) is at most
\begin{multline} 
 \mathbb{P} \left(\sup_{t\in [sL_N , s \lfloor N\tau\rfloor ) } [F^{\oplus N}(0,t)-xt]\geq B\right) +\mathbb{P} \left(\sup_{t\in [s \lfloor N\tau\rfloor , \infty ) } [F^{\oplus N}(0,t)-xt]\geq B \right) \\
\leq \sum_{l=L_N+1}^{\lfloor N\tau\rfloor} e^{-t_l \Psi(x,N^{-1} t_l)} + \sum_{l=\lfloor N\tau\rfloor}^{\infty} e^{-t_l \Psi(x,N^{-1} t_l)}\,. 
\label{eq::52}
\end{multline}
We find a bound for $\Psi(x,t)$ using
trivial extensions to Lemma 5 and Corollary 6 of 
\cite{CaoRamanan2002}, which state that there exist $\acute{\tau}>1$, and $ \beta_1,\beta_2>0$ such
that $ \Psi(x,t) \geq \beta_1$ for $t\in[0,\acute{\tau}]$ and $t\Psi(x,t)
/\log t \geq \beta_2$ for $t\in[\acute{\tau},\infty)$. This gives
\begin{equation} \label{firstsum1}
 \sum_{l=L_N+1}^{\lfloor N\acute{\tau}\rfloor} e^{-t_l \Psi(x,N^{-1} t_l)}  \leq \sum_{l=L_N+1}^{\lfloor N\acute{\tau}\rfloor} e^{-t_l \beta_1} \leq \sum_{l=L_N+1}^{\infty} e^{-t_l \beta_1}\,.
\end{equation}
Using the definition of $L_N$ and $t_l$, and bounding the sum
by an integral, we have
\begin{equation}\label{firstlimit11}
(\ref{firstsum1})  \leq \int_{N^\alpha t_0/s} ^{\infty}
e^{-\beta_1 sl} dl= \beta_1 s e^{-N^\alpha t_0 \beta_1}\,.
\end{equation}
For the second sum on the right-hand side of (\ref{eq::52}), we have that
\begin{multline}\label{secondlimit11}
\sum_{l=\lfloor N\acute{\tau}\rfloor}^{\infty} e^{-t_l \Psi(x,N^{-1} t_l)} \leq \sum_{l=\lfloor N\acute{\tau}\rfloor}^{\infty} e^{-N\beta_2 \log(N^{-1} t_l)}
=\sum_{l=\lfloor N\acute{\tau}\rfloor}^{\infty} \left(\frac{ls}{N} \right)^{-\beta_2 N} \\
\leq s^{-\beta_2 N} N\int_{\acute{\tau}}^{\infty} x^{-\beta_2 N} dx 
= -\frac{ s^{-\beta_2 N} N \acute{\tau} ^{1-\beta_2 N}}{1-\beta_2 N}\,.
\end{multline}
Using (\ref{firstsum1}) and the bounds provided by
(\ref{firstlimit11}) and (\ref{secondlimit11}), we get
\begin{multline*}
\limsup_{N \rightarrow \infty} \frac{1}{N^\alpha} \log
\mathbb{P}\left(\sup_{t>(N^\alpha t_0)/N} [A^{\oplus N}(0,t)-Nxt]\geq
N^\alpha B\right)\\
 \leq \max\left\{-\beta_1 t_0,\limsup_{N \rightarrow \infty} \frac{1}{N^\alpha} \log
\left( \frac{ s^{-\beta_2 N} N \acute{\tau} ^{1-\beta_2 N}}{\beta_2
N-1}\right) \right\}.
\end{multline*}
The second term is $-\infty$, as required.
\end{proof}

\begin{lemma}
Suppose $A$ satisfies Assumptions \ref{ass::ldbg1}, and that $0<\alpha<1$ and $\beta>1$. Then, for any $B, d >0$, there exists $T = T(B, d) \in \mathbb{R}_+$ and $N = N(B, d) \in \mathbb{Z}_+$ such that, for any $t >T$, 
\begin{align*}
\limsup_{N \to \infty} \frac{1}{N^\alpha \log N} \log \mathbb{P} \left( \sup_{t>N^{\alpha - \beta}T } A^{\oplus N} (0,t) - N \lambda t - N^\beta C t \geq N^\alpha B \right) < -d\,.
\end{align*}
\label{lemma: time-bounding for beta > 1}
\end{lemma}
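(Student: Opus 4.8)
The plan is to mimic the structure of the proof of Lemma \ref{lem::lt2}, adapting it to the faster speed $N^{\alpha}\log N$ and the vanishing load regime $\beta > 1$. First I would perform the same time-change $F^{\oplus N}(0,t) = A^{\oplus N}(0,t/N)$, under which the supremum over $t > N^{\alpha-\beta}T$ transforms. Care is needed here: the relevant rescaled threshold becomes $t > N^{\alpha-\beta}T \cdot N = N^{1+\alpha-\beta}T$, and the drift $N\lambda t + N^{\beta}Ct$ becomes $\lambda t + N^{\beta-1}Ct$ in the $F$-coordinates. Since $\beta > 1$, the coefficient $N^{\beta-1}C$ now grows, which only helps us: the process is being drained faster. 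As in the earlier lemma I would discretise the half-line $(N^{1+\alpha-\beta}T, \infty)$ into intervals $I_l = [t_l, t_{l+1})$ with $t_l = sl$ for a suitably small $s$ chosen relative to $B$ and the effective rate, use monotonicity of $F^{\oplus N}$ to replace the supremum over each interval by a point evaluation, and apply a Chernoff bound. The key quantity is again $\Psi(x, N^{-1}t_l)$, and I would invoke the same extensions of Lemma 5 and Corollary 6 of \cite{CaoRamanan2002}, together with Assumption \ref{ass::ldbg1}.2 ($\Psi^1_\infty(x) > 0$ for $x \neq \lambda\mathbb{E}(Y)$), to get the lower bounds $\Psi(x,t) \geq \beta_1$ on a bounded range and $t\Psi(x,t)/\log t \geq \beta_2$ for large $t$.

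The main difference — and the place where the $\log N$ in the speed is produced — is in the tail sum $\sum_l e^{-t_l \Psi(x, N^{-1}t_l)}$ over the large-timescale range. There, $e^{-t_l\Psi(x, N^{-1}t_l)} \leq e^{-N\beta_2 \log(N^{-1}t_l)} = (t_l/N)^{-\beta_2 N}$, and summing/integrating gives a bound of order $s^{-\beta_2 N} N \, \acute{\tau}^{1-\beta_2 N}$ evaluated at the lower endpoint, which here is of order $N^{1+\alpha-\beta}T$ rather than $\acute\tau$. Evaluating the geometric-type tail from this lower limit, the dominant contribution to $\frac{1}{N^\alpha \log N}\log(\cdot)$ comes from a term behaving like $-\beta_2 N \log(N^{\alpha-\beta}T)$, i.e. $-\beta_2 N(\alpha-\beta)\log N$ up to lower-order corrections; since $\beta > 1 > \alpha$ this is $+\infty \cdot \log N$ with the wrong sign unless handled carefully, so in fact I should evaluate more carefully: the sum's leading exponent is $-\beta_2 N \log(N^{1+\alpha-\beta}T/N) \approx -\beta_2 N(\alpha - \beta)\log N$, which is positive — meaning this crude integral bound is too weak at this lower endpoint and I instead bound the sum starting from a fixed $\acute\tau$, picking up $-\beta_1$ times the (large) number of intervals in the bounded range, which is $\Theta(N^{1+\alpha-\beta}T)$. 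Dividing by $N^\alpha \log N$ and using $1 + \alpha - \beta$ versus $\alpha$: if $\beta < 1$ this would blow up, but we are in $\beta>1$, so $N^{1+\alpha-\beta}/(N^\alpha\log N) = N^{1-\beta}/\log N \to 0$ — so that contribution vanishes, not diverges. This forces the conclusion to come instead from the genuinely-small-probability large-time tail, where after choosing $T$ large the exponent $-\beta_2 N \log(N^{-1}t_l)$ with $t_l \geq N^{1+\alpha-\beta}T$ gives $\log$-probability at most $-\beta_2 N \log(N^{\alpha-\beta}T) = -\beta_2 N[(\alpha-\beta)\log N + \log T]$; dividing by $N^\alpha\log N$ yields $-\beta_2 N^{1-\alpha}[(\alpha-\beta) + \log T/\log N] \to -\infty$ as long as we first send $N\to\infty$ — here $\alpha - \beta < 0$ dominates, so this term diverges to $-\infty$. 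Thus the $<-d$ bound holds for all sufficiently large $N$ and any fixed $T$; strictly, I would organise the argument to extract a clean $-\infty$ from the large-time sum and show the remaining finitely-many-terms contribution is $o(N^\alpha \log N)$.

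Concretely, the steps in order: (1) time-change to $F^{\oplus N}$ and identify the transformed threshold and drift; (2) discretise and use monotonicity + Chernoff to bound each interval probability by $e^{-t_l\Psi(x_N, N^{-1}t_l)}$ where $x_N = \lambda + N^{\beta-1}C$ (absorbing the extra drift, or just bounding below by $\lambda$-drift and losing nothing since we only need an upper bound on probability — actually keeping $x_N \geq \lambda\mathbb{E}(Y)$ strictly is what we need for Assumption \ref{ass::ldbg1}.2 to apply); (3) split the sum at $l \asymp N\acute\tau$, bound the near part using $\Psi \geq \beta_1$, bound the far part using $t\Psi/\log t \geq \beta_2$; (4) bound both sums by integrals, evaluate; (5) take $\frac{1}{N^\alpha\log N}\log$ of the total, send $N\to\infty$, observe the near-part contribution is $O(N^{1-\beta}/\log N) \to 0$ and the far-part contribution is $\leq -\beta_2 N^{1-\alpha}(\beta-\alpha) + o(1)$ after choosing $T\geq 1$, hence $\to -\infty$; (6) conclude that for $N$ large enough the expression is $< -d$ for the given $d$, and choose $T = T(B,d)$, $N = N(B,d)$ to record the quantifiers in the statement.

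The hard part will be bookkeeping the interaction of the three growing scales — $N^{\alpha}$ (buffer), $N^{\beta}$ (drift), $N^{\alpha}\log N$ (speed) — in the exponents, in particular making sure that the contribution from the bounded-timescale range (which has $\Theta(N^{1+\alpha-\beta})$ terms) genuinely vanishes after normalisation by $N^\alpha\log N$ rather than contributing at order $N^{1-\beta}$, and ensuring the argument $x_N = \lambda + N^{\beta-1}C \neq \lambda\mathbb{E}(Y)$ (equivalently that we may apply the hypothesis with $x$ bounded away from $\lambda\mathbb{E}(Y)$, which needs $\mathbb{E}(Y) \neq 1$ or a separate small-time argument when $\mathbb{E}(Y)=1$ — in the latter case the $N^{\beta-1}C$ term is exactly what provides the separation). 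Everything else is a routine adaptation of Lemma \ref{lem::lt2}.
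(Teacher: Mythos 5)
Your skeleton matches the paper's proof and Lemma \ref{lem::lt2}: time-change, discretise into intervals $[t_l, t_{l+1})$, replace the supremum by a point evaluation via monotonicity and the $\delta$-trick, Chernoff to get $e^{-u_l\Psi(x_N, N^{-1}u_l)}$ per interval with $x_N = \lambda + N^{\beta-1}C$, split the sum at a fixed time threshold, and bound the near range by $\Psi\geq\beta_1$ and the far range by $t\Psi(x,t)/\log t\geq\beta_2$. But steps (5)--(6) contain a logical error: you compute that the near-range contribution to $\frac{1}{N^\alpha\log N}\log\mathbb{P}$ is $O(N^{1-\beta}/\log N)\to 0$ and the far-range contribution is $\to -\infty$, and then conclude that the expression is eventually $<-d$. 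That does not follow. The probability is the sum of the two range contributions, so $\frac{1}{N^\alpha\log N}\log(\cdot)$ is asymptotically the maximum of the two normalised logs, and here the near range dominates: the limsup is $0$, not something below $-d$. A vanishing normalised log-probability is strictly weaker than the bound the lemma requires.

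The missing idea is that the fixed lower bound $\Psi(x_N,t)\geq\beta_1$ throws away the growth of $x_N$. For $\beta>1$ we have $x_N=\lambda + N^{\beta-1}C \to\infty$, and by the small-time limit underlying Lemma \ref{lem::pple} one has $\Psi(x,t)\to\Omega^*(x)=\sup_\theta[\theta x - \lambda(M(\theta)-1)]$ as $t\to 0$, which grows essentially like $x\log x$. So $\Psi(x_N,t)$ is of order $N^{\beta-1}\log N$ uniformly over small $t$, and combined with the prefactor $u_l \geq N^{1+\alpha-\beta}T$ the exponent in the near-range Chernoff bound is of order $N^{\alpha}T(\beta-1)\log N$. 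Normalising by $N^\alpha\log N$ then gives a quantity of order $-(\beta-1)T$, which can be driven below $-d$ by choosing $T$ large; this is also where the parameter $T$ genuinely enters the bound. That $N$-dependent lower bound on $\Psi$ is the mechanism producing the $\log N$ in the speed and is precisely what the argument must exploit; with only $\Psi\geq\beta_1$, neither the near-range term nor the overall bound reaches the lemma's conclusion, and the same care is needed when reading the published proof, which is elliptical at exactly this step.
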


\begin{proof}
We begin by noting that 
\[
A^{\oplus N} (0,t) - N \lambda t - N^\beta C t \geq N^\alpha B  
\implies \frac{A^{\oplus N} (0,t)}{N^\alpha} - N^{1-\alpha} \lambda t - N^{\beta - \alpha} C t \geq B\,.
\]
Let $t' = N^{\beta - \alpha}t$, so that
\begin{align*}
\frac{A^{\oplus N} (0,t)}{N^\alpha} - N^{1-\alpha} \lambda t - N^{\beta - \alpha} C t =\frac{A^{\oplus N} (0,N^{\alpha-\beta}t')}{N^\alpha} - N^{1-\beta} \lambda t' -  Ct'\,. 
\end{align*}
The problem then reduces to bounding
\begin{align}\label{original_inequality}
\mathbb{P} \left(\sup_{t \geq t_0} \left[\tilde{A}^{N}_{\alpha, \beta}(0,t) - Ct \right] \geq B \right)\,.
\end{align}
We choose $\delta \in (0, B)$ such that $s = \delta C^{-1} < 1$. For $l \in \mathbb{Z}^+$, let $t_l = sl$ and $I_l = [t_l, t_{l+1})$. For fixed $N$, $\tilde{A}^{N}_{\alpha, \beta}(0,t)$ is non-decreasing in $t$, and $B - \delta >0$, therefore we get
\begin{align*}
\mathbb{P} \left(\sup_{t \in [t_l, t_{l+1})} \left[\tilde{A}^{N}_{\alpha, \beta}(0,t) - Ct \right] \geq B \right) & \leq \mathbb{P} \left(\tilde{A}^{N}_{\alpha, \beta}(0, t_{l+1}) > B + C t_{l} \right) \\
& \leq \mathbb{P} \left(\tilde{A}^{N}_{\alpha, \beta}(0, t_{l+1}) > B - \delta + Ct_{l+1} \right) \\															& \leq \mathbb{P} \left(\tilde{A}^{N}_{\alpha, \beta}(0, t_{l+1}) > C t_{l+1} \right)\,.
\end{align*}
We use the Chernoff bound to see that, for $\theta>0$, 
\begin{multline*}
\mathbb{P} \left(\tilde{A}^{N}_{\alpha, \beta}(0,t_l) > C t_l \right)  \leq e^{-t_l C \theta} \mathbb{E} \left[e^{\theta \tilde{A}^{N}_{\alpha, \beta}(0, t_l)} \right]\\ 
 = e^{-t_l \left( C \theta + \lambda \theta N^{1-\beta} - N/t_l \Lambda_{\alpha-\beta}(\theta/N^\alpha) \right)} 
															 \leq e^{-t_l \left( C \theta + \lambda \theta N^{1-\beta} - N/t_l \Lambda_{\alpha-\beta}(\theta) \right)} \,.
\end{multline*}
Taking the infimum over $\theta \geq 0$ we get
\begin{align*}
\mathbb{P}\left(\tilde{A}^{N}_{\alpha, \beta}(0,t_l) > Ct_l\right) \leq e^{-t_l\Psi(C+\lambda N^{1- \beta}, N^{\alpha-\beta}t_l)}\,,
\end{align*}
where the last equality is obtained by recognizing that $N/t_l = \frac{N^{\alpha-\beta+1}}{N^{\alpha-\beta}t_l}$, and that for $m \in [1, \infty)$, $$\sup_{\theta \in [0, \infty)} \left[\theta x - \frac{m}{t} \log \mathbb{E}e^{\theta t}\right] \leq \Psi(x,t)\,.$$

Given $t_0 \in (s^{-1}, \infty)$, let $L_N \in \mathbb{Z}^+$ be such that $N^{\alpha-\beta}t_0 \in [t_{L_N}, t_{L_{N+1}})$.  Let $N_0 = \inf\{N:N_\tau > L_N\}$. For $\tau \in (0, \infty)$ and all $N>N_0$  we have that (\ref{original_inequality}) is at most 
\begin{multline}
\P \left( \sup_{t \in [sL_N, s \floor{N\tau})} \left[ \tilde{A}^{N}_{\alpha, \beta}(0,t) - Ct \right] \geq B \right)
 + \P \left(\sup_{t \in [s\floor{N\tau}, \infty)} \left[ \tilde{A}^{N}_{\alpha, \beta}(0,t) - Ct \right] \geq B \right) \\ \leq \sum_{L_N + 1}^{\floor{N \tau}} e^{-t_l \Psi(C+\lambda N^{1-\beta}, N^{\alpha-\beta}t_l)} + \sum_{l = \floor{N \tau}}^{\infty} e^{-t_l \Psi(C+ \lambda N^{1-\beta}, N^{\alpha - \beta}t_l)}\,.
\label{eq::53}
\end{multline}
We proceed similarly to the proof of Lemma \ref{lem::lt2}.  We bound $\Psi(x,t)$ using extensions to Lemma 5 and Corollary 6 in \cite{CaoRamanan2002}: there exist $\dot{\tau} > 1$, and $\beta_1, \beta_2 >0$ such that $\Psi(x,t) \geq \beta_1$ for $t \in [0, \dot{\tau})$ and $t \Psi(x,t)/\log t \geq \beta_2$ for $t \in [\dot{\tau}, \infty)$. This gives
\begin{align*}
\sum_{L_N + 1}^{\floor{N \dot{\tau}}} e^{-t_l \Psi(C+ \lambda N^{1-\beta}, N^{\alpha-\beta}t_l)} \leq \sum_{l=L_N +1}^{\floor{N \dot{\tau}}} e^{-t_l \beta_1} \leq \sum_{l=L_N +1}^{\infty} e^{-t_l \beta_1}\,,
\end{align*}
which tends to zero as $L_N\rightarrow\infty$.
For the second sum on the right-hand side of (\ref{eq::53}) we have that
\begin{multline*}
\sum_{l = \floor{N \dot{\tau}}}^{\infty} e^{-t_l \Psi(C+ \lambda N^{1-\beta}, N^{\alpha - \beta}t_l)}  \leq \sum_{l = \floor{N \dot{\tau}}}^{\infty} e^{-\beta_2 N^{\beta-\alpha} \log(N^{\alpha - \beta t_l})}\\ 
 = \sum_{l = \floor{N \dot{\tau}}}^{\infty} \left(N^{\alpha-\beta} sl \right)^{-\beta_2 N^{\beta - \alpha}} 								 
 = s^{-\beta_2 N^{\beta-\alpha}} N \int_{\dot{\tau}}^{\infty} x^{-\beta_2 N^{\beta-\alpha}} \, dx 
 = - \frac{s^{-\beta_2 N^{\beta - \alpha + 1}} \dot{\tau}^{1 - \beta_2 N^{\beta - \alpha}}}{1 - \beta_2 N^{\beta - \alpha}}\,,
\end{multline*}
which also tends to zero as $N\rightarrow\infty$. We can therefore choose $L_N = L_N(,B)$ and $N =  N(,B)$ such that for $T = sL$ and $n>N$, each term on the right hand side of (\ref{eq::53}) is less than $\epsilon/2$. Hence, (\ref{original_inequality}) is at most $\epsilon$ for sufficiently large $N$, and the required conclusion follows from taking logarithms.
\end{proof}

\subsubsection{Lemmas for moderate deviations (\texorpdfstring{$\beta<1$)}{}}

We now need to provide equivalent lemmas for the moderate deviations scalings. We begin with a limit on the log moment generating function.

\begin{lemma} \label{lem::pple3}
Suppose $A$ is  a marked point process which satisfies Assumptions \ref{ass::mdlb} or \ref{ass::mdsb}. Given $0=j_0<j_1<j_2< \cdots <j_{n-1}<j_n = 1$ and $t \in \mathbb{R}$,  define the vector $\mathbf{A}^{\mathbf{j},t}$ by $\mathbf{A}^{\mathbf{j},t}_i = A(t j_{i-1}, t j_i)$. Also, let $f: \mathbb{R}^+ \rightarrow \mathbb{R}^+$ be a continuous function for which $f(t)\rightarrow0$ as $t\rightarrow0$.  Then there exists $\mathbf{\theta}^* >0$ such that, uniformly for all $\theta\in [0,\theta^*]^n,$
\[
\lim_{t \rightarrow 0} \frac{1}{t (f(t))^2} \log \mathbb{E}\left[ \exp \left \{\langle\theta f(t), \mathbf{A}^{\mathbf{j},t}\rangle - \lambda \mathbb{E}(Y)f(t) \sum_{i=1}^n\theta_i (j_{i}-j_{i-1})t \right \} \right]
 =\frac{1}{2}\lambda \mathbb{E}(Y^2) \sum_{i=1}^n (j_{i}-j_{i-1})\theta_i^2\,.
\]
\end{lemma}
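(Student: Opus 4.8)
The plan is to mimic the structure of the proof of Lemma \ref{lem::pple}, but carrying out a second-order (Gaussian) expansion rather than a first-order (Poisson) one, since the normalisation here is $t(f(t))^2$ rather than $t$ and the shift subtracts off the mean. First I would condition on $X(0,t)$, the number of points of the underlying point process in $(0,t)$, writing
\[
\mathbb{E}\left[e^{\langle \theta f(t),\mathbf{A}^{\mathbf{j},t}\rangle}\right] = p_t(0) + p_t(1)\,\mathbb{E}\left[e^{\langle \theta f(t),\mathbf{A}^{\mathbf{j},t}\rangle}\,\middle|\,X(0,t)=1\right] + \sum_{k=2}^K p_t(k)\,\mathbb{E}\left[\cdots\middle| X(0,t)=k\right] + \mathbb{E}\left[e^{\langle\theta f(t),\mathbf{A}^{\mathbf{j},t}\rangle}1_{\{X(0,t)>K\}}\right],
\]
using exactly as in Lemma \ref{lem::pple} that $p_t(1)=\lambda t + o(t)$, $p_t(k)=o(t)$ for $k\ge 2$, and that the tail term is controlled by Assumption \ref{ass::mdlb}.1 (resp.\ \ref{ass::mdsb}.1). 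On $\{X(0,t)=1\}$ the conditional expectation is $\sum_{i=1}^n \mathbb{P}(X(tj_{i-1},tj_i)=1\mid X(0,t)=1)\,M(\theta_i f(t))$, and by \eqref{eqn::shortprob} the conditional probabilities tend to $j_i - j_{i-1}$. Since $f(t)\to 0$ and $M$ is smooth near $0$ (finite in a neighbourhood of $0$ under Assumption \ref{ass::mdsb}.3, or on all of $\mathbb{R}$ under Assumption \ref{ass::mdlb}.3), I expand $M(\theta_i f(t)) = 1 + \theta_i f(t)\mathbb{E}(Y) + \tfrac12 \theta_i^2 f(t)^2 \mathbb{E}(Y^2) + o(f(t)^2)$.

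Next I would assemble these pieces to compute $\mathbb{E}[e^{\langle\theta f(t),\mathbf{A}^{\mathbf{j},t}\rangle}] - 1$. The $p_t(0) = 1 - \lambda t + o(t)$ term contributes $-\lambda t$; the $p_t(1)$ term contributes $\lambda t \sum_i (j_i - j_{i-1})(1 + \theta_i f(t)\mathbb{E}(Y) + \tfrac12\theta_i^2 f(t)^2\mathbb{E}(Y^2) + o(f(t)^2))$. The constant ($1$) pieces cancel the $-\lambda t$, leaving
\[
\mathbb{E}\left[e^{\langle\theta f(t),\mathbf{A}^{\mathbf{j},t}\rangle}\right] - 1 = \lambda t f(t)\mathbb{E}(Y)\sum_{i=1}^n \theta_i(j_i-j_{i-1}) + \tfrac12 \lambda t f(t)^2 \mathbb{E}(Y^2)\sum_{i=1}^n \theta_i^2(j_i - j_{i-1}) + o(t f(t)^2).
\]
The key point is that the $O(tf(t))$ term is exactly the mean correction: when I multiply through by $e^{-\lambda\mathbb{E}(Y)f(t)\sum_i\theta_i(j_i-j_{i-1})t}$, or equivalently expand $\log(1+\cdots)$ and subtract the shift, the first-order-in-$f(t)$ contributions cancel, and what survives at order $t f(t)^2$ is $\tfrac12 \lambda \mathbb{E}(Y^2)\sum_i \theta_i^2 (j_i-j_{i-1})$. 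Concretely, since $\log(1+x) = x - \tfrac12 x^2 + O(x^3)$ and here $x = O(tf(t))$, the $x^2$ term is $O(t^2 f(t)^2) = o(t f(t)^2)$, so dividing by $t f(t)^2$ and sending $t\to 0$ gives the claimed limit.

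The main obstacle — and where I would spend the most care — is the \emph{uniformity} of the limit over $\theta \in [0,\theta^*]^n$ and the rigorous control of the error terms. I would need to show the $o(f(t)^2)$ remainder in the Taylor expansion of $M(\theta_i f(t))$ is uniform in $\theta_i \in [0,\theta^*]$; this follows from an integral-form (or Lagrange) remainder estimate $|M(\eta) - 1 - \eta\mathbb{E}(Y) - \tfrac12\eta^2\mathbb{E}(Y^2)| \le \tfrac16|\eta|^3 M'''(\theta^* f_{\max})$ valid once $\theta^* f(t)$ is small, using finiteness of $M$ on a neighbourhood of $0$ to bound $M'''$. Likewise I must check that the tail term $\mathbb{E}[e^{\langle\theta f(t),\mathbf{A}^{\mathbf{j},t}\rangle}1_{\{X(0,t)>K\}}]$, bounded by $\mathbb{E}[e^{\theta^* A(0,t)}1_{\{X(0,t)>K\}}]$ for $t$ small enough that $\theta^* f(t) \le \theta_0$, is $o(tf(t)^2)$; since $f(t)\to 0$, $o(t)$ suffices here only if we know it is in fact $o(tf(t)^2)$, so I would instead choose $\theta^*$ small and use that Assumption \ref{ass::mdlb}.1 gives $o(t)$ while a slightly sharpened version (or the observation that $f(t)^2 \to 0$ forces us to be careful) — more safely, I would re-derive the tail bound directly and note the whole argument only requires the tail to be $o(t)$ \emph{provided} we also divide the leading comparison correctly; the cleanest route is to absorb the tail into the $o(tf(t)^2)$ by noting $\mathbb{E}[e^{\theta^* A(0,t)}1_{\{X(0,t)>K\}}] = o(t)$ and, since we will ultimately see the relevant scale is $tf(t)^2$ with $f(t)^2$ bounded below on no interval, handle it by a separate estimate exploiting that for the moderate-deviations $f$ one may take $f$ with $f(t)^2 \gtrsim t^{\epsilon}$; in any case this bookkeeping, together with verifying the squared-term $(\mathbb{E}[\cdots]-1)^2 = o(tf(t)^2)$ bound, is the only genuinely delicate part, the rest being the routine expansion above.
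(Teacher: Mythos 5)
Your structure is the right one in broad strokes (condition on $X(0,t)$, expand, show the $f(t)$-order part cancels the shift and the $f(t)^2$-order part gives $\tfrac12\lambda\mathbb{E}(Y^2)\sum_i(j_i-j_{i-1})\theta_i^2$), but there is a genuine gap in the claimed cancellation of the first-order terms, and the paper's proof goes a different and cleaner route to avoid exactly this problem. Your $p_t(1)$-contribution to the first order in $f(t)$ is $\lambda t f(t)\mathbb{E}(Y)\sum_i q_i(t)\theta_i$, where $q_i(t)=\mathbb{P}(X(tj_{i-1},tj_i)=1\mid X(0,t)=1)$. Equation \eqref{eqn::shortprob} gives only $q_i(t)\to j_i-j_{i-1}$, with no rate. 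The subtracted shift is $\lambda t f(t)\mathbb{E}(Y)\sum_i(j_i-j_{i-1})\theta_i$, so after subtraction you are left with $\lambda t f(t)\mathbb{E}(Y)\sum_i(q_i(t)-(j_i-j_{i-1}))\theta_i$, plus the $O(tf(t))$ first-moment contributions from $\{X(0,t)\ge2\}$ that you have discarded as $o(t)$. Divided by $tf(t)^2$ this is of size $o(1)/f(t)$, which need not vanish, since $f$ is an arbitrary continuous function tending to $0$. The paper avoids this entirely by not going through the conditional mgf for the first moment: it uses the exact identity $\mathbb{E}\langle\theta f(t),\mathbf{A}^{\mathbf{j},t}\rangle=\lambda\mathbb{E}(Y)f(t)\,t\sum_i\theta_i(j_i-j_{i-1})$, valid for all $t$ by stationarity and independence of the marks, so the first-order cancellation with the shift is exact; conditioning on $X(0,t)$ is used only for the \emph{second} moment term, where the $q_i(t)\to j_i-j_{i-1}$ approximation is harmless because that term is already explicitly of order $tf(t)^2$.

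Your treatment of the tail term is also too weak for the same reason, as you half-notice yourself. Bounding $\mathbb{E}[e^{\langle\theta f(t),\mathbf{A}^{\mathbf{j},t}\rangle}1_{\{X(0,t)>K\}}]\le\mathbb{E}[e^{\theta_0 A(0,t)}1_{\{X(0,t)>K\}}]=o(t)$ is not enough to give $o(tf(t)^2)$, and the suggested patch (taking $f(t)^2\gtrsim t^\epsilon$) is not available in the lemma as stated (and would not help: $o(t)/(tf(t)^2)=o(1)/f(t)^2$ still need not vanish). What the paper actually needs to control is only $\mathbb{E}(\langle\theta f(t),\mathbf{A}^{\mathbf{j},t}\rangle^2 1_{\{X(0,t)>K\}})$, which already carries the factor $f(t)^2$; dividing by $tf(t)^2$ reduces the problem to $t^{-1}\mathbb{E}(A(0,t)^2 1_{\{X(0,t)>K\}})\to0$, which follows from Assumption \ref{ass::mdlb}.1 (or \ref{ass::mdsb}.1) together with $x^2\le Ce^{\theta_0 x}$. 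So the fix for both issues is the same: expand $e^x-1$ as $x+\tfrac12 x^2+\cdots$, handle the linear term $\mathbb{E}\langle\theta f(t),\mathbf{A}^{\mathbf{j},t}\rangle$ exactly (it equals the shift), and apply the conditioning argument only to the quadratic and higher terms.
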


\begin{proof}
As in the proof for Lemma \ref{lem::pple} we know $\mathbb{P}(X(0,t)=1) =\lambda t +o(t)$ and
$\mathbb{P}(X(0,t) \geq 2) =o(t)$. In addition, by assumption, using (\ref{eq::explimit}) and making use of the expansion of $e^x$ about $x=0$ we get
\begin{equation}
\label{eq:qwerty}\mathbb{E}\left[ e^{\langle\theta f(t), \mathbf{A}^{\mathbf{j},t}\rangle} \right] -1= \mathbb{E}\left[ \sum_{i=1}^{\infty}  (\langle\theta f(t), \mathbf{A}^{\mathbf{j},t}\rangle)^i/i!\right]\,.
\end{equation}
Now we  consider $\mathbb{E}(\langle\theta f(t), \mathbf{A}^{\mathbf{j},t}\rangle^i)$ for $i>0$.  This is bounded above by $\mathbb{E}((\theta^* f(t) A(0,t))^i)$, as $\max \theta_i \leq \theta^*$. Using (\ref{eq:qwerty}) and exchanging the sum and expectation, we have, for $\nu< \theta^*$,
$$ \sum_{i=1}^{\infty} \nu^i \mathbb{E}( A(0,t)^i)/i! = \lambda t( M(\nu)-1) +o(t)\,.$$
Since $A(0,t)\geq 0$ we have $\mathbb{E}( A(0,t)^i) = c_i t +o(t)$ for any $i>0$, giving  $\mathbb{E}((\theta^* f(t) A(0,t))^i) = c_i (f(t)\theta^*)^i t + o(t f(t)^i)$, which will be used to bound $\mathbb{E}(\langle\theta f(t), \mathbf{A}^{\mathbf{j},t}\rangle^i)$. 
We know $$\mathbb{E}(\langle\theta f(t), \mathbf{A}^{\mathbf{j},t}\rangle) = \lambda \mathbb{E}(Y)f(t) \sum_{i=1}^n\theta_i (j_{i}-j_{i-1})t\,.$$
We now let $p_k(t) = \mathbb{P}(X(0,t)=k)$, so get 
\[
\mathbb{E}(\langle\theta f(t), \mathbf{A}^{\mathbf{j},t}\rangle^2)\\
 = \sum_{i=1}^k p_i(t) \mathbb{E}(\langle\theta f(t), \mathbf{A}^{\mathbf{j},t}\rangle^2| X(0,t)=i) + \mathbb{E}(\langle\theta f(t), \mathbf{A}^{\mathbf{j},t}\rangle^2 {1}_{X(0,t)>K})\,.
\]
Since we have $\mathbb{P}(X(0,t)>2) = o(t)$ we get $p_i(t) \mathbb{E}(\langle\theta f(t), \mathbf{A}^{\mathbf{j},t}\rangle^2| X(0,t)=i) = o(t f(t)^2)$ for $i>1$. 
Putting this all together we get
\begin{multline*}
\mathbb{E}\left[ e^{\langle\theta f(t), \mathbf{A}^{\mathbf{j},t}\rangle} \right] -1 =
\lambda \mathbb{E}(Y)f(t) \sum_{i=1}^n\theta_i (j_{i}-j_{i-1})t \\
+ \frac{1}{2}(\lambda t +o(t)) \mathbb{E}(\langle\theta f(t), \mathbf{A}^{\mathbf{j},t}\rangle^2| X(0,t)=1)+ o(t f(t)^2) 
\\+ \frac{1}{2}\mathbb{E}(\langle\theta f(t), \mathbf{A}^{\mathbf{j},t}\rangle^2 {1}_{X(0,t)>K}) 
+\sum_{i=3}^{\infty} \mathbb{E}(\langle\theta f(t), \mathbf{A}^{\mathbf{j},t}\rangle^i)\,.
\end{multline*}
Using (\ref{eqn::shortprob}) we get
 $$\mathbb{E}(\langle\theta f(t), \mathbf{A}^{\mathbf{j},t}\rangle^2| X(0,t)=1) = \frac{1}{2}\mathbb{E}(Y^2) \sum_{i=1}^n (j_{i}-j_{i-1})(\theta_i f(t))^2\,.$$
As $x-x^2/2 -c \leq \log(1+x) -c \leq x-c$, we use the above to get 
\begin{multline} 
\lim_{t \rightarrow 0}\frac{1}{tf(t)^2}\left( \mathbb{E}\left[ e^{\langle\theta f(t), \mathbf{A}^{\mathbf{j},t}\rangle} \right] -1 -\lambda \mathbb{E}(Y)f(t) \sum_{i=1}^n\theta_i (j_{i}-j_{i-1})t \right )\\=  \frac{1}{2}\lambda \mathbb{E}(Y^2) \sum_{i=1}^n (j_{i}-j_{i-1})\theta_i^2+ \lim_{t \rightarrow 0}\frac{1}{tf(t)^2}\mathbb{E}(\langle\theta f(t), \mathbf{A}^{\mathbf{j},t}\rangle^2 {1}_{X(0,t)>K})\,.
\end{multline}
The second term goes to zero since, by assumption, we have 
$$
\lim_{t\rightarrow 0}t^{-1}
\mathbb{E}\left[e^{\theta_0 A(0,t)}1_{\{X(0,t)>K\}}\right]=0\,.
$$
This gives the desired result.
\end{proof}

\begin{lemma}\label{lem::lt3}
Let $A$ be a marked point process such that $\Psi_{\infty,d\rightarrow t}^2(x) > 0$  and  $\Psi_{\infty,t\rightarrow d}^2(x) > 0$. Given $B>0$, $d >0$ and $\alpha, \beta >0$ such that $\beta <1$ and $\alpha +\beta>1$, there exists $t_0 <\infty$ such that
\begin{multline*}
\limsup_{N \rightarrow \infty} \frac{1}{N^{\alpha+\beta -1}} \log
\mathbb{P}\left(\sup_{t>N^{\alpha-\beta} t_0} [A^{\oplus N}(0,t)-N \lambda\mathbb{E}(Y) t- N^{\beta}x t]\geq N^\alpha B \right)\\
\leq -d.
\end{multline*}
\end{lemma}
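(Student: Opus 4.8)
The plan is to follow the template of Lemmas~\ref{lem::lt2} and~\ref{lemma: time-bounding for beta > 1}, with the genuine novelty confined to the large-timescale control of $\Psi$. First I would reduce via the scaling: dividing the event through by $N^\alpha$ and putting $t = N^{\alpha-\beta}t'$ rewrites the probability as $\mathbb{P}\bigl(\sup_{t'>t_0}[\tilde{A}^N_{\alpha,\beta}(0,t')-xt']\ge B\bigr)$, with $\tilde{A}^N_{\alpha,\beta}$ the scaled process of~\eqref{eqn::scaling}. Choosing $\delta\in(0,\min\{B,x\})$, $s=\delta x^{-1}<1$ and $t'_l=sl$, monotonicity of $a\mapsto A^{\oplus N}(0,a)$ gives, on each interval $[t'_l,t'_{l+1})$, the bound $\sup[\tilde{A}^N_{\alpha,\beta}(0,t')-xt']\le \tilde{A}^N_{\alpha,\beta}(0,t'_{l+1})-xt'_{l+1}+\delta$, so (as $B-\delta>0$) the probability over that interval is at most $\mathbb{P}(\tilde{A}^N_{\alpha,\beta}(0,t'_{l+1})>xt'_{l+1}) = \mathbb{P}(A^{\oplus N}(0,a_l)>(N\lambda\mathbb{E}(Y)+N^\beta x)a_l)$ with $a_l := N^{\alpha-\beta}t'_{l+1}$. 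A Chernoff bound over $\theta\ge0$, optimised in $\theta$ — the supremum over $\theta\ge0$ equalling $\Psi$ because the target rate $\lambda\mathbb{E}(Y)+N^{\beta-1}x$ strictly exceeds the mean rate $\lambda\mathbb{E}(Y)$ — yields $\mathbb{P}(\tilde{A}^N_{\alpha,\beta}(0,t'_{l+1})>xt'_{l+1})\le e^{-Na_l\Psi(\lambda\mathbb{E}(Y)+d_N,\,a_l)}$ where $d_N := N^{\beta-1}x$. Summing over $l\ge L:=\floor{t_0/s}$ bounds the quantity of interest by $\sum_{l\ge L}e^{-Na_l\Psi(\lambda\mathbb{E}(Y)+d_N,a_l)}$.

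The new feature, relative to Lemma~\ref{lem::lt2}, is that $d_N\to0$, so the first argument of $\Psi$ converges to the mean rate $\lambda\mathbb{E}(Y)$ and the constant Cao--Ramanan-type lower bounds used there degenerate; they must be replaced by bounds quadratic in the displacement. Concretely I would establish, via extensions of Lemma~5 and Corollary~6 of~\cite{CaoRamanan2002} adapted to track the displacement $d$ from the mean rate (and, for the large-timescale piece, drawing on the hypotheses $\Psi^2_{\infty,d\to t}>0$ and $\Psi^2_{\infty,t\to d}>0$), that there are $\acute{\tau}>1$, $d_0>0$ and $\beta_1,\beta_2>0$ such that $\Psi(\lambda\mathbb{E}(Y)+d,\tau)\ge\beta_1 d^2$ for all $\tau\in(0,\acute{\tau}]$, $d\in(0,d_0]$, and $\tau\Psi(\lambda\mathbb{E}(Y)+d,\tau)\ge\beta_2 d^2\log\tau$ for all $\tau\ge\acute{\tau}$, $d\in(0,d_0]$.

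For $N$ large enough that $d_N\le d_0$, I would split $\sum_{l\ge L}$ according to whether $a_l\le\acute{\tau}$ or $a_l>\acute{\tau}$. On the bounded range, $Na_l\Psi(\lambda\mathbb{E}(Y)+d_N,a_l)\ge\beta_1 sx^2 N^{\alpha+\beta-1}(l+1)$, so that part is dominated by a geometric series with first term at most $e^{-\beta_1 x^2 N^{\alpha+\beta-1}t_0}$ (using $L+1>t_0/s$) and ratio $e^{-\beta_1 sx^2 N^{\alpha+\beta-1}}\to0$; hence $N^{-(\alpha+\beta-1)}$ times its logarithm tends to $-\beta_1 x^2 t_0$. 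On the large range, $Na_l\Psi(\lambda\mathbb{E}(Y)+d_N,a_l)\ge\beta_2 x^2 N^{2\beta-1}\log a_l$; bounding the tail by an integral as in Lemma~\ref{lem::lt2} and using $2\beta-1>0$ (which follows from the standing constraints $\alpha<\beta$, $\alpha+\beta>1$ under which the lemma is invoked) gives a bound of order $N^{\beta-\alpha}\acute{\tau}^{\,1-\beta_2 x^2 N^{2\beta-1}}/(\beta_2 x^2 N^{2\beta-1}-1)$, whose logarithm divided by $N^{\alpha+\beta-1}$ tends to $-\infty$ because $\beta>\alpha$ and $\acute{\tau}>1$. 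Adding the two parts, $\limsup_{N\to\infty}N^{-(\alpha+\beta-1)}\log\mathbb{P}(\cdots)\le-\beta_1 x^2 t_0$, which is $\le-d$ once $t_0\ge\max\{s^{-1},\,d/(\beta_1 x^2)\}$; this fixes $t_0=t_0(B,d)$.

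The step I expect to resist a routine treatment is the large-timescale estimate $\tau\Psi(\lambda\mathbb{E}(Y)+d,\tau)\ge\beta_2 d^2\log\tau$ of the second paragraph: in the application, $\tau=a_l$ sweeps from $\asymp\acute{\tau}$ up to $\infty$ while $d=d_N\to0$, and the two are coupled through $N$, so neither iterated $\liminf$ on its own delivers a bound valid throughout. The natural remedy is to split $[\acute{\tau},\infty)$ at a threshold comparable to a power of $1/d$, handling the ``$\tau$ very large'' regime with $\Psi^2_{\infty,t\to d}$ and the ``$\tau$ moderately large'' regime with $\Psi^2_{\infty,d\to t}$ — which is exactly why the hypothesis asks for both iterated limits to be positive, and making this split uniform is where the argument needs real care.
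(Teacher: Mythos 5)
Your plan matches the paper's proof in its overall structure: interval discretisation, Chernoff bound, split of the resulting series at an $N$-dependent threshold with a geometric-decay piece on bounded timescales and a tail piece whose contribution is $-\infty$ at rate $N^{\alpha+\beta-1}$ (using $2\beta-1>0$, as you note); the substitution $t=N^{\alpha-\beta}t'$ is a cosmetic repackaging of the paper's $F^{\oplus N}(0,t)=A^{\oplus N}(0,t/N)$. The one genuine divergence is where the bounded-timescale bound $\Psi(\lambda\mathbb{E}(Y)+d,\tau)\ge\beta_1 d^2$ comes from: you propose to extend Lemma~5 and Corollary~6 of \cite{CaoRamanan2002}, but those are stated for a fixed super-mean rate and say nothing about the $d^2$ degeneration as $d\to 0$. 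The paper instead derives the bound from Lemma~\ref{lem::pple3}, whose small-$t$ log-MGF asymptotics already exhibit the $\tfrac12\lambda\mathbb{E}(Y^2)\theta^2$ quadratic, combined with H\"older's inequality and stationarity to push from $t$ near $0$ out to $t\in(T_2,\acute\tau)$, giving $\Psi(\lambda\mathbb{E}(Y)+N^{\beta-1}x,\tau)\ge N^{2(\beta-1)}\delta_2 T_2/\tau$; that is the route to take rather than an unspecified extension of Cao--Ramanan. Finally, your closing concern about uniformity of the large-timescale estimate is well placed: the paper also passes from positivity of the two iterated $\liminf$s to a uniform bound $t\Psi(\lambda\mathbb{E}(Y)+x,t)/(x^2\log t)>\delta_1$ over $x<D$, $t>T_1$ without spelling out the uniformity argument, and the two-regime split of $[\acute\tau,\infty)$ you describe is what a fully detailed write-up would need to supply.
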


\begin{proof}
We use the scaling $F^{\oplus N}(0,t) =A^{\oplus N}(0,t/N)$,  giving
\begin{multline}
\mathbb{P}\left(\sup_{t>N^{\alpha-\beta} t_0}  [A^{\oplus N}(0,t)-N \lambda\mathbb{E}(Y) t- N^{\beta}x t]\geq N^\alpha B \right) \\ =\mathbb{P}\left(\sup_{t>N^{\alpha-\beta +1} t_0} [F^{\oplus N}(0,t)-\lambda\mathbb{E}(Y) t- N^{\beta-1}x t]\geq N^{\alpha}B \right)\,.
\label{eqn::probscaling}
\end{multline}
We choose $N_1$ such that $N_1^\alpha B > \lambda \mathbb{E}(Y)+N_1^{\beta -1}x$, then choose $\delta \in (0,N_1^\alpha B)$ such that $\delta > \lambda \mathbb{E}(Y)+N_1^{\beta -1}x$. Note that for $N>N_1$ we have $N^\alpha B > N_1^\alpha B$ and $\lambda \mathbb{E}(Y)+N^{\beta -1}x <\lambda \mathbb{E}(Y)+N_1^{\beta -1}x$.

For $l \in \mathbb{Z}^+$, let $I_l =
[l, {l+1})$. Since $F^{\oplus N}(0,t)$ is non-decreasing
in $t$, and because we have $N^\alpha B-\delta >0$ for $N>N_1$, we get
\begin{align*}
\mathbb{P}&\left(\sup_{t\in [l , {l+1} ) } [F^{\oplus N}(0,t)-\lambda\mathbb{E}(Y) t- N^{\beta-1}x t]\geq N^\alpha B\right)\\ 
&\leq \mathbb{P}(F^{\oplus N}(0,{l+1})> N^\alpha B +(\lambda\mathbb{E}(Y) + N^{\beta-1}x) l)\\
&\leq \mathbb{P}(F^{\oplus N}(0,{l+1})> N^\alpha B - \delta +(\lambda\mathbb{E}(Y) + N^{\beta-1}x) (l+1))\\
&\leq \mathbb{P}(F^{\oplus N}(0,{l+1})> (\lambda\mathbb{E}(Y) + N^{\beta-1}x) (l+1))\,.
\end{align*}
For the remainder of this proof we will assume $N> N_1$.
We can now use the Chernoff bound which, for $\theta\geq 0$, gives
\begin{multline*}
\mathbb{P}(F^{\oplus N}(0,{l})> (\lambda\mathbb{E}(Y) + N^{\beta-1}x) l) \leq e^{-(\lambda\mathbb{E}(Y) + N^{\beta-1}x) l \theta} \mathbb{E}[e^{\theta F^{\oplus N}(0,{l})}]\\
=e^{-l \left( (\lambda\mathbb{E}(Y) + N^{\beta-1}x) \theta -N l^{-1} \log \mathbb{E}[e^{\theta A(0,{l}/N)}]\right) }\,.
\end{multline*}
Taking the infimum over $\theta \geq 0$ we get
$$
\mathbb{P}(F^{\oplus N}(0,{l})>(\lambda\mathbb{E}(Y) + N^{\beta-1}x){l}) \leq e^{-l \Psi((\lambda\mathbb{E}(Y) + N^{\beta-1}x),N^{-1} l)}\,.
$$

Given $t_0\in (1, \infty)$, let $L_N \in \mathbb{Z}^+$ be such
that $N^{\alpha- \beta +1} t_0 \in [{L_N} ,{L_N +1})$.  Let $N_2 = \inf \{ N: N
\tau >L_N \}$.  For some
$\tau \in (0, \infty)$ and all $N>\max (N_1,N_2)$  we find that (\ref{eqn::probscaling}) is at most
\begin{multline}
 \mathbb{P} \left(\sup_{t\in [L_N ,  \lfloor N\tau\rfloor ) } [F^{\oplus N}(0,t)-(\lambda\mathbb{E}(Y) + N^{\beta-1}x)t]\geq N^\alpha B\right)
 \\ +\mathbb{P} \left(\sup_{t\in [ \lfloor N\tau\rfloor , \infty ) } [F^{\oplus N}(0,t)-(\lambda\mathbb{E}(Y) + N^{\beta-1}x)t]\geq N^\alpha B\right) \\
\leq \sum_{l=L_N+1}^{\lfloor N\tau\rfloor} e^{-l \Psi((\lambda\mathbb{E}(Y) + N^{\beta-1}x),N^{-1} l)} 
+ \sum_{l=\lfloor N\tau\rfloor}^{\infty} e^{-l \Psi((\lambda\mathbb{E}(Y) + N^{\beta-1}x),N^{-1} l)} \,.
\label{eq::55}
\end{multline}
We consider $\Psi(\lambda\mathbb{E}(Y) + x,t)$ to find bounds on the above. We start with the assumption that
$\Psi_{\infty,d\rightarrow t}^2(x) > 0 \textrm{ and  } \Psi_{\infty,t\rightarrow d}^2(x) > 0$.  There exist $D$ and $T_1$ such that, for all $x<D$ and $t>T_1$, 
$$\frac{t\Psi(\lambda\mathbb{E}(Y) + x,t)}{x^2 \log(t)} > \delta_1\,.$$
 Also, we have
 \begin{multline}
  \Psi(\lambda\mathbb{E}(Y) + N^{\beta-1} x,t)  = \sup_{\theta} \left\{\theta(\lambda\mathbb{E}(Y) + N^{\beta-1}x) -t^{-1}\log\mathbb{E}( e^{A(0,t)\theta})\right\}\\
  = N^{2(\beta -1)} \sup_{\theta} \left\{\theta N^{1-\beta}x -(tN^{2(\beta -1)})^{-1}\log\mathbb{E}( e^{A(0,t)\theta -\theta\lambda\mathbb{E}(Y)})\right\}.  \label{eqn::thing}
 \end{multline}
 Letting $\theta' = N^{1-\beta} \theta$,
 \begin{align*}
  (\ref{eqn::thing}) &= N^{2(\beta -1)} \sup_{\theta} \left\{\theta x -(tN^{2(\beta -1)})^{-1}\log\mathbb{E}( e^{A(0,t) N^{\beta -1}\theta -N^{\beta -1}\theta\lambda\mathbb{E}(Y)})\right\}\,.\notag
\end{align*}
From Lemma \ref{lem::pple3} there is $\theta_0$ such that, for all $\theta \in (0, \theta_0)$ and however we take the joint limit, we get 
$$
\lim_{N\rightarrow \infty, t\rightarrow 0} \frac{1}{tN^{2(\beta -1)}}\log\mathbb{E}( e^{A(0,t) N^{\beta -1}\theta -N^{\beta -1}\theta\lambda\mathbb{E}(Y)}) = \lambda \mathbb{E}(Y^2) \theta^2\,.
$$ 
So we have that, for $x \neq 0$, 
$$
\lim_{N\rightarrow \infty, t\rightarrow 0} (N^{2(1-\beta)}) \Psi(\lambda\mathbb{E}(Y) + N^{\beta-1} x,t) >0\,.
$$
This means that there is $T_2<1$ and $N_3$ such that, for all $N>N_3$, $(N^{2(1-\beta)}) \Psi(\lambda\mathbb{E}(Y) + N^{\beta-1} x,t) > \delta_2$.  Using H\"older's inequality and stationarity of $A$ means that for $t\in(T_2, \tau)$ we have $\Psi(\lambda\mathbb{E}(Y) + N^{\beta-1} d,t)  > N^{2(\beta-1)} (\delta_2 T_2/t)$.

Let $\acute{\tau} =\max(e^{\frac{d+1}{\delta_1}}, T_1)$ and $N>\max(N_1, N_2, N_3)$. Using the above results we bound the terms of interest: 
\begin{equation}\label{firstsum}
 \sum_{l=L_N+1}^{\lfloor N\acute{\tau}\rfloor} e^{-l \Psi((\lambda\mathbb{E}(Y) + N^{\beta-1}x),N^{-1} l)}  \leq \sum_{l=L_N+1}^{\lfloor N\acute{\tau}\rfloor} e^{-l N^{2(\beta-1)} (\delta_2 T_2/\acute{\tau})}\\
  \leq \sum_{l=L_N+1}^{\infty} e^{-l N^{2(\beta-1)} (\delta_2 T_2/\acute{\tau})}\,.
\end{equation}
Using the definition of $L_N$  and bounding the sum
by an integral we have
\begin{equation}\label{firstlimit1}
\sum_{l=L_N+1}^{\infty} e^{-l N^{2(\beta-1)} (\delta_2 T_2/\acute{\tau})} \leq \int_{N^{\alpha -\beta +1} t_0} ^{\infty}
e^{-l N^{2(\beta-1)} (\delta_2 T_2/\acute{\tau})} dl
= N^{2(\beta-1)} (\delta_2 T_2/\acute{\tau})e^{- t_0 N^{\alpha+\beta-1} (\delta_2 T_2/\acute{\tau})}\,.
\end{equation}
For the second sum in (\ref{eq::55}) we have
\begin{multline}
\sum_{l=\lfloor N\acute{\tau}\rfloor}^{\infty} e^{-l \Psi((\lambda\mathbb{E}(Y) + N^{\beta-1}x),N^{-1} l)} \leq \sum_{l=\lfloor N\acute{\tau}\rfloor}^{\infty} e^{-N^{2\beta-1}\delta_1 \log(N^{-1} l)}\\ 
=\sum_{l=\lfloor N\acute{\tau}\rfloor}^{\infty} \left(\frac{l}{N} \right)^{-\delta_1 N^{2\beta -1}} 
\leq  N\int_{\acute{\tau}}^{\infty} x^{-\delta_1 N^{2\beta -1}} dx 
= -\frac{ N \acute{\tau} ^{1-\delta_1 N^{2\beta -1}}}{1-\delta_1 N^{2\beta -1}}\,.
\label{secondlimit1}
\end{multline}

So, using (\ref{firstsum}) and then the bounds provided by
(\ref{firstlimit1}) and (\ref{secondlimit1}) we get that
\[
\limsup_{N \rightarrow \infty} \frac{1}{N^{\alpha+\beta -1}} \log
\mathbb{P}\Bigg(\sup_{t>(N^{\alpha-\beta +1} t_0)} [A^{\oplus N}(0,t)
-(N\lambda \mathbb{E}(Y) +N^\beta)t]\geq
N^\alpha B\Bigg)
\]
is bounded above by
$$
\max\left\{-t_0\delta_2 T_2/\acute{\tau} ,\limsup_{N \rightarrow \infty} \frac{1}{N^{\alpha+\beta-1}} \log
\left( \frac{ N \acute{\tau} ^{\delta_1 N^{2\beta -1}}}{1-\delta_1 N^{2\beta -1}-1}\right) \right\}\,.
$$
The final term is $-\infty$ for $\alpha <\beta$, and bounded above by $-d$, showing the desired result.
\end{proof}

\subsubsection{Properties of the processes}

We now look more closely at the processes of interest. We begin by checking that these processes belong to the space $\mathcal{D}_0$.

\begin{lemma}\label{lem::C0}
Let $A$ be a marked point process such that $\Psi^1_\infty(x) >0$ for all $x \neq \lambda \mathbb{E}(Y)$. Then $\tilde{A}^N_{\alpha,\beta}$ is in $\mathcal{D}_{0}$ for all $N$.
\end{lemma}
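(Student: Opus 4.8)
The plan is to check the three properties that define membership in $\mathcal{D}_0$ for the sample paths of $\tilde{A}^N_{\alpha,\beta}$: that they are right-continuous with left limits, that they vanish at the origin, and that $\tilde{A}^N_{\alpha,\beta}(t)/(1+t)\to 0$ as $t\to\infty$ (almost surely). The first two are immediate. For a fixed $N$, $A^{\oplus N}(0,\cdot)=\sum_{i=1}^N A^{(i)}(0,\cdot)$ is a non-decreasing, right-continuous pure-jump path with left limits (it jumps by the relevant mark at each point of $X^{\oplus N}$); composing with the continuous increasing time change $t\mapsto N^{\alpha-\beta}t$, multiplying by $N^{-\alpha}$, and subtracting the continuous linear drift all preserve the cadlag property, while $A^{(i)}(0,0)=0$ gives $\tilde{A}^N_{\alpha,\beta}(0)=0$. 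Thus the content of the lemma lies entirely in the long-run rate condition.

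For that, substitute $s=N^{\alpha-\beta}t$. Writing $\lambda\mathbb{E}(Y)$ for the common mean rate of a single source (which is the drift rate subtracted in the definition of $\tilde{A}^N_{\alpha,\beta}$), a short computation gives
\[
\frac{\tilde{A}^N_{\alpha,\beta}(t)}{1+t}=\frac{N^{-\alpha}\bigl(A^{\oplus N}(0,s)-N\lambda\mathbb{E}(Y)\,s\bigr)}{1+N^{\beta-\alpha}s}.
\]
Since $N$ is fixed and $s\to\infty$ as $t\to\infty$, the denominator is bounded below by $N^{\beta-\alpha}s$, so it suffices to establish the strong law $A^{\oplus N}(0,s)/s\to N\lambda\mathbb{E}(Y)$ almost surely: then the numerator is $N^{-\alpha}s\cdot o(1)$ and the ratio is $N^{-\beta}o(1)\to 0$. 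As $A^{\oplus N}$ is a finite sum of i.i.d.\ copies of $A$, this reduces to showing $A(0,s)/s\to\lambda\mathbb{E}(Y)$ a.s.

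I would obtain this strong law from the hypothesis $\Psi^1_\infty(x)>0$ for $x\neq\lambda\mathbb{E}(Y)$ by a Chernoff estimate along a geometric subsequence, exactly in the spirit of Lemmas \ref{lem::lt2} and \ref{lemma: time-bounding for beta > 1}. For $x>\lambda\mathbb{E}(Y)$, optimising Markov's inequality over $\theta\ge 0$ yields $\mathbb{P}(A(0,t)\ge xt)\le e^{-t\Psi(x,t)}$, and positivity of $\Psi^1_\infty(x)$ gives $t\Psi(x,t)\ge \frac{1}{2}\Psi^1_\infty(x)\log t$ for all large $t$, hence $\mathbb{P}(A(0,t)\ge xt)\le t^{-\Psi^1_\infty(x)/2}$ eventually. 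For any fixed $\rho>1$ this makes $\sum_k\mathbb{P}(A(0,\rho^k)\ge x\rho^k)$ finite, so Borel--Cantelli together with monotonicity of $A(0,\cdot)$ gives $\limsup_{t\to\infty}A(0,t)/t\le \rho x$ a.s.; intersecting over countable sequences $x\downarrow\lambda\mathbb{E}(Y)$ and $\rho\downarrow 1$ then yields $\limsup_{t}A(0,t)/t\le\lambda\mathbb{E}(Y)$ a.s. The matching lower bound follows identically, optimising over $\theta\le 0$ (for which $\mathbb{E}[e^{\theta A(0,t)}]\le 1$ automatically, so no integrability input is needed) and applying the hypothesis at points $x<\lambda\mathbb{E}(Y)$. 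This gives $A(0,t)/t\to\lambda\mathbb{E}(Y)$ a.s., and with it the lemma.

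The one point requiring genuine care is that this must be an almost-sure, not merely in-probability, statement: the polynomial tail bound $t^{-\Psi^1_\infty(x)/2}$ need not be summable over the integers, which is precisely why the geometric subsequence and the monotonicity of the paths are used to force summability. (One could alternatively deduce the strong law from Birkhoff's theorem for the stationary process and observe that the hypothesis excludes a non-degenerate invariant limit, but the Chernoff argument above is self-contained and mirrors the estimates already deployed in this section.) Everything else is routine bookkeeping with the time change $s=N^{\alpha-\beta}t$.
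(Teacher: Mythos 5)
Your proof is correct, and it uses the same core estimate as the paper (Chernoff bound plus the positivity of $\Psi^1_\infty$ to get polynomial decay of $\mathbb{P}(A(0,t)>xt)$). However, you complete a step that the paper leaves implicit. The paper's proof stops after establishing $\mathbb{P}(A(0,t)>xt)\to 0$ as $t\to\infty$ — that is, convergence in probability of $A(0,t)/t$ — and asserts this suffices, without explaining how to upgrade to the almost-sure statement that $\tilde{A}^N_{\alpha,\beta}\in\mathcal{D}_0$ with probability one. You correctly identify that this upgrade is the real content of the lemma and supply the missing Borel--Cantelli argument along geometric subsequences (using monotonicity of $A(0,\cdot)$ to interpolate), which turns the polynomial tail bound $t^{-\delta}$ into a summable series and hence an a.s.\ strong law. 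The alternative you mention in parentheses — invoking Birkhoff's theorem for the stationary process $A$ and then using convergence in probability to identify the a.s.\ limit — is presumably what the paper's authors had in mind, but neither route is spelled out in the paper. Your Chernoff/Borel--Cantelli version is the more self-contained of the two and fits naturally alongside Lemmas \ref{lem::lt2} and \ref{lemma: time-bounding for beta > 1}, which use the same ingredients. The reduction to a single source via the explicit time change $s=N^{\alpha-\beta}t$ and the observation that the denominator $1+N^{\beta-\alpha}s$ is harmless for fixed $N$ is also clean and correct.
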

\begin{proof}
Firstly, by definition we have that $\tilde{A}^N_{\alpha,\beta}$ is a cadlag function. Secondly, we need to show that $\tilde{A}^N_{\alpha,\beta} /(t+1) \rightarrow 0$ as $t \rightarrow \infty$. This is equivalent to showing
$ \mathbb{P}(\tilde{A}^N_{\alpha,\beta} \notin \mathcal{D}_{0}) =0$. 
We do this by considering, for $x>\lambda \mathbb{E}(Y)$, $\mathbb{P}(A(0,t)>xt)\leq e^{-t \Psi(x,t)}$.  Since $\Psi^1_\infty (x) >0$ given  $x \neq \lambda \mathbb{E}(Y)$,  there exist $T$ and $\delta>0$ such that, for all $t>T$, $t \Psi(x,t)/\log t \geq \delta > 0$.
This shows that  $\mathbb{P}(A(0,t)>xt) \rightarrow 0$ as $t$ tends to $\infty$. A similar argument holds for $x<\lambda \mathbb{E}(Y)$.
\end{proof}

For our Definition \ref{def:linerinterp} of the linear interpolation of a point process to be of use, we need to show the processes are exponentially tight (\cite[p.8]{Dembo1998}) on the scalings of interest. 
\begin{lemma}\label{poly}
Let $\alpha>0$ and $A$ be a marked point process. If $\beta \geq 1 $ and
$M(\theta) < \infty$ for all $\theta \in \mathbb{R}$, or if $\beta <1$ and 
there exists $\theta>0$ such that $M(\theta) <\infty$, then the associated measures of $\tilde{A}^N_{\alpha ,\beta}$ (the scaled original process) and $\tilde{\bar{A}}^N_{\alpha,\beta}$ (the scaled linear interpolation) are exponentially equivalent in $\mathcal{D}_{0}$ with the scaled uniform norm $||\cdot ||_s$.
\end{lemma}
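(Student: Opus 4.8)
The statement to establish is that, for every $\delta>0$ and at the large deviations speed $a_N$ appropriate to the scaling under consideration ($a_N=N$ in case (i), $N^\alpha$ in (ii), $N^{2\alpha-1}$ in (iii), $N^{\alpha+\beta-1}$ in (iv), $N^\alpha\log N$ in (v)), one has $\limsup_{N\to\infty} a_N^{-1}\log\mathbb{P}\big(\|\tilde{\bar{A}}^N_{\alpha,\beta}-\tilde{A}^N_{\alpha,\beta}\|_s>\delta\big)=-\infty$; this is exactly what is needed later to transfer a sample path large deviations principle from the continuous polygonal process to the cadlag one. The plan rests on one elementary observation. By Definition \ref{def:linerinterp}, at every time $t'$ the two scaled processes differ by at most a single scaled mark: with $s=N^{\alpha-\beta}t'$, $|\tilde{\bar{A}}^N_{\alpha,\beta}(t')-\tilde{A}^N_{\alpha,\beta}(t')| = N^{-\alpha}\,\frac{s-\tau_N^-(s)}{\tau_N^+(s)-\tau_N^-(s)}\,\zeta_N(s) \le N^{-\alpha}\zeta_N(s)$, where $\zeta_N(s)$ is the mark carried by the point of $X^{\oplus N}$ immediately after time $s$. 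Thus the scaled uniform distance between the two processes is controlled entirely by the sizes of individual marks, and everything reduces to a tail bound on $Y$.

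First I would exploit the $1/(1+t')$ weight built into $\|\cdot\|_s$. Partitioning $[0,\infty)$ into the unit intervals $[k,k+1)$ and using $1/(1+t')\le 1/(1+k)$ there, the event $\{\|\tilde{\bar{A}}^N_{\alpha,\beta}-\tilde{A}^N_{\alpha,\beta}\|_s>\delta\}$ forces, for some $k\ge0$, some point of $X^{\oplus N}$ inside a real-time window of length $N^{\alpha-\beta}$ to carry a mark larger than $(1+k)N^\alpha\delta$. Since the marks are i.i.d.\ and independent of the point processes, and $\mathbb{E}[X^{\oplus N}(0,N^{\alpha-\beta})]=\lambda N^{1+\alpha-\beta}$, a first-moment/union bound gives $\mathbb{P}(\cdot)\le\sum_{k\ge0}\lambda N^{1+\alpha-\beta}\mathbb{P}(Y>(1+k)N^\alpha\delta)$; as $\mathbb{P}(Y>\cdot)$ is decreasing and, for $N$ large, decays at least geometrically along this lattice of thresholds, the sum is at most $C\,N^{1+\alpha-\beta}\mathbb{P}(Y>N^\alpha\delta)$ with $C$ bounded in $N$. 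A Chernoff estimate then yields $\mathbb{P}(Y>N^\alpha\delta)\le\exp(-\Lambda_Y^*(N^\alpha\delta))$ with $\Lambda_Y^*(u)=\sup_{\theta}[\theta u-\log M(\theta)]$; the two forms of the moment hypothesis enter here, supplying either the linear lower bound $\Lambda_Y^*(u)\ge\theta_0u-\log M(\theta_0)$ for a fixed small $\theta_0>0$ (when only $M(\theta)<\infty$ near $0$ is assumed, i.e.\ $\beta<1$), or the superlinear growth $\Lambda_Y^*(u)/u\to\infty$ (when $M$ is finite on all of $\mathbb{R}$, i.e.\ $\beta\ge1$).

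Taking logarithms, the claim reduces to verifying $a_N^{-1}\big[(1+\alpha-\beta)\log N-\Lambda_Y^*(N^\alpha\delta)\big]\to-\infty$ in each scaling; the $\log N$ term is harmless in all five (it is $o(a_N)$), so the work is in the $\Lambda_Y^*(N^\alpha\delta)/a_N$ term. In (iii) and (iv), $a_N$ equals $N^{2\alpha-1}$, resp.\ $N^{\alpha+\beta-1}$, which is $o(N^\alpha)$ because $\alpha,\beta<1$, so even the crude linear bound on $\Lambda_Y^*$ forces the quotient to $-\infty$; in (i) and (ii), $a_N=N^\alpha$ exactly, but all exponential moments are finite, so the superlinearity of $\Lambda_Y^*$ does the job. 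The remaining case (v), with $a_N=N^\alpha\log N$, is the one I expect to be the main obstacle: here one must extract an extra logarithmic factor of decay from the mark distribution, so the argument must genuinely use the full strength of $M(\theta)<\infty$ for all $\theta$ — the faster-than-exponential decay of $\mathbb{P}(Y>u)$ — to push $\Lambda_Y^*(N^\alpha\delta)/(N^\alpha\log N)\to\infty$. Beyond this, only routine bookkeeping remains: measurability of $\tau_N^{\pm}$ and $\zeta_N$, the fact that $\tilde{\bar{A}}^N_{\alpha,\beta}\in\mathcal{D}_0$ so that $\|\cdot\|_s$ is finite (compare Lemma \ref{lem::C0}), and the uniformity in $N$ of all constants above.
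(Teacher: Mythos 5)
Your proposal reproduces, in substance, the same route the paper takes: bound the pointwise gap between $\tilde{\bar{A}}^N_{\alpha,\beta}$ and $\tilde{A}^N_{\alpha,\beta}$ by a single scaled mark, exploit the $1/(1+t)$ weight in $\|\cdot\|_s$ to localize to intervals $[i,i+1)$, use a first-moment union bound in each interval (giving a factor $\lambda N^{1+\alpha-\beta}+1$ from the mean number of candidate points), and then apply a Chernoff/Legendre-transform bound on $\mathbb{P}(Y>\delta(1+i)N^\alpha)$. Your handling of cases (i)--(iv) is therefore essentially the paper's argument and is sound: for $\beta<1$ a single $\theta>0$ with $M(\theta)<\infty$ already yields a term $-\theta\delta N^{1-\beta}\to-\infty$ at speed $N^{\alpha+\beta-1}$, while for $\beta=1$ the superlinearity $\Lambda_Y^*(u)/u\to\infty$ (finiteness of $M$ everywhere) delivers $-\infty$ at speed $N^\alpha$.

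The gap is in case (v), $\beta>1$, which you rightly flag as "the main obstacle" but then leave to routine bookkeeping. What you need is $\Lambda_Y^*(\delta N^\alpha)/(N^\alpha\log N)\to\infty$, and your proposal asserts this follows from the full strength of $M(\theta)<\infty$ for all $\theta$. It does not: that hypothesis only guarantees $\Lambda_Y^*(u)/u\to\infty$, which is strictly weaker than $\Lambda_Y^*(u)/(u\log u)\to\infty$. Take $Y$ with a Poisson distribution: then $M(\theta)=e^{\mu(e^\theta-1)}<\infty$ for every $\theta$ but $\Lambda_Y^*(u)=u\log u-u\log\mu-u+\mu\sim u\log u$, so $\Lambda_Y^*(\delta N^\alpha)/(N^\alpha\log N)\to\alpha\delta$, a finite constant. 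In this example your bound (and, tracing through the calculation, the paper's as well — the iterated limit "$N\to\infty$ then $\theta\to\infty$" gives $-\theta\delta N^{1-\beta}+o(1)\to 0$ rather than $-\infty$ once $\beta>1$) only yields $\limsup_N\,(N^\alpha\log N)^{-1}\log\mathbb{P}(\|\cdot\|_s>\delta)\leq-\alpha\delta$, which is not $-\infty$; and by a matching large-deviations lower bound for the Poisson tail this is essentially sharp. So for $\beta>1$ exponential equivalence at speed $N^\alpha\log N$ genuinely requires more than superlinearity of $\Lambda_Y^*$ (e.g.\ bounded marks, or the explicit condition $\Lambda_Y^*(u)/(u\log u)\to\infty$). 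You should not dismiss this step; it needs an additional hypothesis or a different argument.
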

\begin{proof}
The sets we need to consider are $$\left\{  \left\Vert \tilde{\bar{A}}^N_{\alpha,\beta} -\tilde{A}^N_{\alpha,\beta}\right\Vert_s >\gamma \right\}\,, $$  
which are obviously measurable. To estimate the measure of these we consider whether this event happens in intervals of the form $[i,i+1)$:
$$\mathbb{P}\left( \left\Vert \tilde{\bar{A}}^N_{\alpha,\beta} -\tilde{A}^N_{\alpha,\beta}\right\Vert_s >\gamma \right ) \leq \sum_{i =0}^\infty \mathbb{P} \left( \| \tilde{\bar{A}}^N_{\alpha,\beta} -\tilde{A}^N_{\alpha,\beta}\|_{[i,i+1)} > \gamma (i+1) \right) \,,
$$
where $\|x\|_{[i,i+1)} = \sup_{t\in[i,i+1)}|x(t)|$. 
Now, we have that $$|\tilde{\bar{A}}^N_{\alpha,\beta}(0,t) -\tilde{A}^N_{\alpha,\beta}(0,t)| < |\zeta_N(N^{\alpha -\beta}t)|/ N^{\alpha}\,,$$ which implies that if \mbox{$| \tilde{\bar{A}}^N_{\alpha,\beta} -\tilde{A}^N_{\alpha,\beta} |_{[i,i+1)} >\gamma (i+1)$}, either there is a mark in
$(-N^{\alpha -\beta} (i+1),-N^{\alpha -\beta} i]$ or $\zeta_N(N^{\alpha -\beta} (i+1))$ is larger than $N^\alpha \gamma (i+1)$.  So, for any $\gamma>0$,
\begin{multline}\label{eqn::mnb}
\mathbb{P}(| \tilde{\bar{A}}^N_{\alpha,\beta} -\tilde{A}^N_{\alpha,\beta}|_{[i,i+1)} > \gamma (i+1) )  \\ \leq \sum_{k=1}^{N}\mathbb{P}\left( \bigcup_{j=X^{(k)}(0,N^{\alpha-\beta}(i))}^{X^{(k)}(0,N^{\alpha-\beta}(i+1))} \left\{ Y_j^{(i)} > \gamma (i+1) N^{\alpha} \right\} \right)
+ \mathbb{P}\left(Y_{1}^{(1)} > \gamma (i+1) N^{\alpha}\right)\,,
\end{multline}
where the first term represents marks in $(-N^{\alpha -\beta}(i+1),-N^{\alpha -\beta}i]$, and the second the first mark before this.  Now, the RHS of (\ref{eqn::mnb}) is
\begin{align*}
&N\mathbb{P}\left( \bigcup_{j=X^{(k)}(0,N^{\alpha-\beta}(i))}^{X^{(k)}(0,N^{\alpha-\beta}(i+1))} \{ Y_j^{(1)} > \gamma (i+1) N^{\alpha} \} \right)+\mathbb{P}(Y_{1}^{(1)} > \gamma (i+1) N^{\alpha})\\
&=N \mathbb{E} \left( 1_{\{\cup_{j=X^{(k)}(0,N^{\alpha-\beta}(i))}^{X^{(k)}(0,N^{\alpha-\beta}(i+1))} \{ Y_j^{(1)} > \gamma (i+1) N^{\alpha} \} \}} \right) +\mathbb{P}(Y_{1}^{(1)} > \gamma (i+1) N^{\alpha})\\
&\leq N \mathbb{E} \left(X^{(1)}(N^{\alpha-\beta}i,N^{\alpha-\beta}(i+1)) 1_{ \{ Y_1^{(1)} > \gamma (i+1) N^{\alpha} \}} \right)+\mathbb{P}(Y_{1}^{(1)} > \gamma (i+1) N^{\alpha}) \\
&=(N^{\alpha -\beta  +1} \lambda +1) \mathbb{P}(Y_{1}^{(1)} > \gamma (i+1) N^{\alpha}) \,.
\end{align*}
We now apply the Chernoff bound to this to get that, for $\theta \geq 0$,
$$
\mathbb{P}(| \tilde{\bar{A}}^N_{\alpha,\beta} -\tilde{A}^N_{\alpha,\beta}|_{[i,i+1)} > \gamma (i+1) ) \leq (N^{\alpha - \beta +1} \lambda +1) M(\theta) e^{-N^\alpha \theta \gamma (i+1)}\,.
$$
Summing the terms and bounding the sum by an integral we get 
\begin{align*}\mathbb{P}\left( \left\Vert \tilde{\bar{A}}^N_{\alpha,\beta} -\tilde{A}^N_{\alpha,\beta}\right\Vert_s >\gamma \right ) &\leq \sum_{i =0}^\infty (N^{\alpha - \beta +1} \lambda +1) M(\theta) e^{-N^\alpha \theta \gamma (i+1)} \\
&\leq e^{-N^\alpha \theta \gamma }((N^{\alpha - \beta +1} \lambda +1) M(\theta))(1+ (N^\alpha \theta \gamma)^{-1})\,.
\end{align*}
Now, for $\beta <1$ we just select $\theta > 0$ such that $M(\theta) < \infty$ and we let $N \rightarrow \infty$.  If $\beta \geq 1$ we know that, since $M(\theta)<\infty$ for all $\theta$, we can take  $N \rightarrow \infty$ and then $\theta \rightarrow \infty$, which gives 
\begin{displaymath}
\limsup_{N\rightarrow\infty} \frac{1}{N^{\alpha + \beta -1}} \log \mathbb{P}\left(\left \Vert \tilde{\bar{A}}^N_{\alpha,\beta}|_{[0,T]} -\tilde{A}^N_{\alpha,\beta}|_{[0,T]} \right\Vert_s>\gamma \right)  = -\infty\,,
\end{displaymath}
for any $\gamma>0$. This shows the two measures are exponentially tight.
\end{proof}

\subsection{Framework for the proof of the large deviations principles}\label{sec::framework}
In this section we provide a series of lemmas which form the basis of the proofs of our main results; the proofs will be completed in subsequent sections.
We start by proving a finite dimensional large deviations principle, which is then extended into a sample path result in finite time.
These lemmas are based on ideas from the proof of Mogulskii's theorem \cite[Theorem 5.1.2]{Dembo1998} and the extension of this to infinite time by O'Connell and Ganesh \cite{Ganesh2002}.  

\begin{lemma} \label{lem::finiteA}
Let $\mathscr{J}$ be the collection of all ordered finite subsets
of $(0,1]$. For any  $T \in \mathbb{R}^+$, $j =\{
0<s_1<s_2<\cdots<s_{\lvert j\rvert }\leq 1\} \in \mathscr{J}$ and
$f: [0,T]\times[0,T] \rightarrow \mathbb{R }$, let $p^T_j (f)$ denote
the vector $(f(0,T s_1),f(0,T s_2), \dots, f(0,T s_{\lvert j
\rvert})) \in \mathbb{R}^{\lvert j\rvert}$. Let $A$ be a marked point process. Given $\alpha, \beta \in \mathbb{R}^+$ such that either $0<\alpha<\beta$ and $\alpha+\beta>1$, or $0<\alpha<1$ and $\beta>1$, we define
\[
  \Omega_{\alpha, \beta}(\theta, j,T) = \lim_{N\rightarrow \infty} \frac{1}{f(N)}\log \mathbb{E} \left( e^{\sum_{i=1}^{\lvert j \rvert} N^{\beta -1} \theta_i A( N^{\alpha -\beta} j_{i-1} T, N^{\alpha -\beta} j_{i} T )- \theta_i T(j_i -j_{i-1})N^{\alpha -1}\lambda\mathbb{E}(Y)}\right),  
\]
where
\begin{align*}
    f(N) = \left \lbrace \begin{array}{ll}
       N^{\alpha + \beta - 2}  & \text{ if } 0< \alpha < \beta \text{ and } \alpha + \beta > 1\,,  \\
       N^{\alpha} \log N  &  \text{ if } 0< \alpha < 1 \text{ and } \beta > 1\,.
    \end{array} \right.
\end{align*}
We assume the following: 
\begin{enumerate}
\item $\Omega_{\alpha, \beta}(\theta, j,T)$ is steep \cite[Definition 2.3.5]{Dembo1998} in the variable $\theta$ for every $j$ and $T$,
\item $\mathcal{D}^\circ_{\Omega_{\alpha, \beta}}$  contains $\theta =0$,
\item $\Omega_{\alpha, \beta}(\theta, j,T)$ is differentiable in  $\theta$ on  $\mathcal{D}^{\circ}_{\Omega_{\alpha, \beta}}$.
\end{enumerate} 
The sequence of vectors $p^T_j\left ( \tilde{A}^N_{\alpha, \beta} \right)$ satisfies
 a large deviations principle in  $\mathbb{R}^{\lvert j \rvert}$ with rate
$f(N)$ and good rate function
\begin{equation*}
I_{j,T}(\mathbf{z})=
\sup_{\theta \in \mathbb{R}^{\lvert j \rvert} } \left\{ \sum_{i=1}^{\lvert j \rvert} (z_i-z_{i-1}) \theta_i - \Omega_{\alpha, \beta}(\theta, j,T) \right \}\,.
\end{equation*}
\end{lemma}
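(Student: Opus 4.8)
This is a finite-dimensional large deviations principle, so the natural tool is the G\"artner--Ellis theorem \cite[Theorem 2.3.6]{Dembo1998}: Assumptions 1--3 of the lemma (steepness of $\Omega_{\alpha,\beta}$ in $\theta$, the origin in $\mathcal{D}^\circ_{\Omega_{\alpha,\beta}}$, differentiability on $\mathcal{D}^\circ_{\Omega_{\alpha,\beta}}$) are precisely the essential-smoothness and domain conditions required there. The plan is: (i) show that the scaled logarithmic moment generating function of the vectors $p^T_j(\tilde{A}^N_{\alpha,\beta})$ converges to $\Omega_{\alpha,\beta}$ (after a linear change of variables); (ii) quote G\"artner--Ellis to obtain the LDP at rate $f(N)$ with good rate function the Fenchel--Legendre transform of the limiting function; (iii) rewrite that transform in the increment form $I_{j,T}$.

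For step (i) I would pass from the ``cumulative'' coordinates $\big(\tilde{A}^N_{\alpha,\beta}(0,Ts_1),\dots,\tilde{A}^N_{\alpha,\beta}(0,Ts_{|j|})\big)$ to the ``increment'' coordinates $\tilde{A}^N_{\alpha,\beta}(Ts_{i-1},Ts_i)$ via the invertible linear substitution $\theta_i=\sum_{k\ge i}\lambda_k$ (equivalently $\lambda_i=\theta_i-\theta_{i+1}$, with $\theta_{|j|+1}=0$), exactly as in the proof of Mogulskii's theorem and its extension in \cite{Ganesh2002}. Under this substitution $\langle\lambda, p^T_j(\tilde{A}^N_{\alpha,\beta})\rangle = \sum_{i=1}^{|j|}\theta_i\,\tilde{A}^N_{\alpha,\beta}(Ts_{i-1},Ts_i)$, and, since the $N$ sources are i.i.d.\ and $\tilde{A}^N_{\alpha,\beta}$ is (see \eqref{eqn::scaling}) an additive functional of them plus a deterministic drift, the corresponding moment generating function factorizes into the $N$-th power of a single-source moment generating function evaluated at the shrinking time scale $N^{\alpha-\beta}T$, times the exponential of the drift correction $-\sum_i\theta_i T(s_i-s_{i-1})N^{\alpha-1}\lambda\mathbb{E}(Y)$. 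Matching the normalization $f(N)$ against the number of sources turns $\tfrac{1}{f(N)}\log(\cdot)$ into $\tfrac{N}{f(N)}$ times the single-source log moment generating function, which (after the linearization $\log(1+x)=x+O(x^2)$, valid because the first of each set of traffic assumptions forces the short-time contribution to dominate, as in Lemmas \ref{lem::pple} and \ref{lem::pple3}) is exactly $\Omega_{\alpha,\beta}(\theta,j,T)$; the limit exists in $[0,\infty]$ for every $\theta$, and convexity of the prelimit functions (H\"older) passes to the limit, so the G\"artner--Ellis hypotheses apply.

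Given step (i), step (ii) is a direct citation of \cite[Theorem 2.3.6]{Dembo1998}, which yields the LDP in $\mathbb{R}^{|j|}$ at rate $f(N)$ with good rate function $\Lambda^*(\mathbf{z})=\sup_{\lambda}\{\langle\lambda,\mathbf{z}\rangle-\Omega_{\alpha,\beta}(\theta(\lambda),j,T)\}$. For step (iii), undoing the substitution (and using the convention $z_0=0$) gives $\langle\lambda,\mathbf{z}\rangle=\sum_{i=1}^{|j|}\theta_i(z_i-z_{i-1})$, and since $\lambda\mapsto\theta(\lambda)$ is a bijection of $\mathbb{R}^{|j|}$ the supremum may be taken over $\theta$ directly, so $\Lambda^*(\mathbf{z})=\sup_{\theta\in\mathbb{R}^{|j|}}\{\sum_{i=1}^{|j|}(z_i-z_{i-1})\theta_i-\Omega_{\alpha,\beta}(\theta,j,T)\}=I_{j,T}(\mathbf{z})$, with goodness inherited. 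The main obstacle is step (i): one must check that the two prescriptions for $f(N)$ are exactly those for which the $N$-fold product of drift-corrected single-source moment generating functions converges (rather than degenerating to $0$ or $\infty$), and that the error from the linearization and from the order-$\ge 3$ terms is $o(f(N))$ uniformly on the relevant ranges of $\theta$ --- this is where the short-time estimates of Lemmas \ref{lem::pple}, \ref{lem::pple3} and the first traffic assumption are used, and where the light-load regime $\beta>1$ behaves qualitatively differently, producing the $\log N$ factor in $f(N)$. Everything after that is a mechanical application of G\"artner--Ellis and Legendre duality.
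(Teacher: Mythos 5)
Your proposal is correct and follows essentially the same route as the paper: both pass between the cumulative vector and the increment vector via the same linear bijection and then invoke G\"artner--Ellis at rate $f(N)$, with assumptions 1--3 of the lemma supplying essential smoothness. The only cosmetic difference is ordering: the paper applies G\"artner--Ellis to the increment vector $\mathbf{W}_N^j$ and then transfers to $\mathbf{Z}_N^j$ by the contraction principle, whereas you perform the substitution $\lambda_i = \theta_i - \theta_{i+1}$ inside the moment generating function and apply G\"artner--Ellis to $\mathbf{Z}_N^j$ directly --- equivalent computations. One small note: the convergence of the scaled log-MGF to $\Omega_{\alpha,\beta}$ (your ``main obstacle'') is not actually something to be proved in this lemma --- it is built into the definition of $\Omega_{\alpha,\beta}$ as a hypothesis, and the explicit identification is deferred to the later Lemmas \ref{lem::ratesb}, \ref{lem::ratemsb} and \ref{lem::ratebg1}.
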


\begin{proof}
We have that, for a fixed $N$, $p_j^T\bigl( \tilde{A}_N \bigr)$ is the following random vector:
\begin{displaymath}
\mathbf{Z}_N^j =( \tilde{A}^N_{\alpha, \beta} (0,T s_1), \dots ,  \tilde{A}^N_{\alpha,\beta}(0,T s_{\lvert j \rvert}))\,.
\end{displaymath}
Let
\begin{displaymath}
\mathbf{W}_N^j =( \tilde{A}^N_{\alpha,\beta}(0,T s_1), \tilde{A}^N_{\alpha,\beta}(T s_1,T s_2),\dots ,\tilde{A}^N_{\alpha,\beta}(T s_{\lvert j \rvert-1} , T s_{\lvert j \rvert}))\,.
\end{displaymath}
We prove a large deviations principle for $\mathbf{W}$  and  apply the contraction mapping principle to find a large deviations principle for $\mathbf{Z}$. We use the
G\"artner-Ellis theorem \cite[Theorem 2.3.6]{Dembo1998}, employing the log moment generating function of $W_N^j$,
$\Psi_{N} (\theta) = \log \mathbb{E}[ e^{\langle\theta, W_N^j \rangle}]$. We
have
\[
 \Psi_{N} (\theta) 
 =N\log \mathbb{E} \Bigg( \exp \Bigg(\sum_{i=1}^{\lvert j \rvert} \frac{1}{N^\alpha} \theta_i A( N^{\alpha -\beta} j_{i-1} T, N^{\alpha -\beta} j_{i} T )
  - \theta_i T(j_i -j_{i-1})N^{-\beta}\lambda\mathbb{E}(Y)\Bigg) \Bigg)\,.
\]
By definition $\Psi(\theta) = \lim_{N\rightarrow \infty} \frac{1}{f(N)} \Psi_N( \theta f(N) )=\Omega_{\alpha, \beta}(\theta, j,T)$.
We need $\Psi$ to satisfy the conditions of the G\"artner-Ellis
theorem; this is ensured by the assumptions on $\Omega_{\alpha, \beta}$.
So the rate function for the large deviations principle for $\mathbf{W}_N^j$ is
\begin{align*}
\Psi^* (w) = \sup_{ \theta \in \mathbb{R}^{\lvert j \rvert} } \left\{ \langle w,\theta \rangle - \Psi (\theta) \right\}
= \sup_{\theta \in \mathbb{R}^{\lvert j \rvert} } \left\{ \sum_{i=1}^{\lvert j \rvert} w_i \theta_i - \Omega_{\alpha, \beta}(\theta, j,T) \right \}\,.
\end{align*}
Since the map $\mathbf{W}_N^j \mapsto \mathbf{Z}_N^j$  is continuous and one
to one, we can find the large deviations principle for $\mathbf{Z}_N^j$ by the
contraction principle \cite[Theorem 4.2.1]{Dembo1998}. 
\end{proof}

As $\tilde{A}^N_{\alpha,\beta}$ and $\tilde{\bar{A}}^N_{\alpha,\beta}$ are exponentially equivalent in the cases considered above, the results of Lemma \ref{lem::finiteA}  hold for $\tilde{\bar{A}}^N_{\alpha,\beta}$.

Next we have a lemma which allows us to turn the previous finite dimensional result into a sample path result.

\begin{lemma} \label{lem::finitesp}
For a given $T\in \mathbb{R}^+$, let $\mathscr{C}^T$ be the space of continuous functions, $x:[0,T]\mapsto \mathbb{R}$ for which $x(0)=0$. Let $W^N$ be a random variable on $\mathscr{C}^T$ such that, given any $j\in \mathscr{J}$ and $0<T'\leq T$, we have that $p^{T'}_j (W^N)$ obeys a large deviations principle with rate $f(N)$ and good rate function $I_{j,T'}$. Then $W^N$ obeys a large deviations principle in $\mathscr{C}^T$ with the topology of pointwise convergence with rate $f(N)$ and good rate function 
$$I_T(x) = \sup_{j\in \mathscr{J}, 0<T'\leq T} I_{j,T'}(x).$$
Furthermore, if $W^N$ is exponentially tight with respect to the scaled uniform norm then $W^N$ obeys a large deviations principle in $\mathscr{C}^T$ with the scaled uniform norm and the same rate and rate function.
\end{lemma}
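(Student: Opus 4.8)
The plan is to prove this in two stages, matching the two sentences of the statement: first obtain the LDP in $\mathscr{C}^T$ with the topology of pointwise convergence via the Dawson--G\"artner projective limit theorem, and then upgrade to the scaled uniform topology using the exponential tightness hypothesis together with the fact that the two topologies agree on compact sets.

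\textbf{Stage 1: pointwise-convergence LDP via projective limits.} The space $\mathscr{C}^T$ with the topology of pointwise convergence embeds into the product space $\mathbb{R}^{[0,T]}$, and more usefully it is the projective limit of the finite-dimensional spaces $\mathbb{R}^{|j|}$ indexed by $j \in \mathscr{J}$ (together with the choices $0 < T' \le T$), with the natural projections $p^{T'}_j$ and the obvious consistency maps between them when one index set refines another. By hypothesis, for each fixed $(j,T')$ the pushforward of $W^N$ under $p^{T'}_j$ satisfies an LDP in $\mathbb{R}^{|j|}$ at speed $f(N)$ with good rate function $I_{j,T'}$. The Dawson--G\"artner theorem (see \cite[Theorem 4.6.1]{Dembo1998}) then yields that $W^N$ satisfies an LDP in the projective limit at speed $f(N)$ with good rate function $x \mapsto \sup_{(j,T')} I_{j,T'}\bigl(p^{T'}_j(x)\bigr)$, which is exactly $I_T(x) = \sup_{j\in\mathscr{J},\, 0<T'\le T} I_{j,T'}(x)$. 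One small point to check is that the projective-limit topology on the relevant subset of $\mathbb{R}^{[0,T]}$ restricted to $\mathscr{C}^T$ coincides with pointwise convergence on $\mathscr{C}^T$; this is immediate since pointwise convergence is by definition convergence of all coordinate projections, and evaluations at the rational (or all) times $T s_i$ generate the same topology. One must also verify that $I_T$ is a good rate function, i.e.\ has compact sub-level sets, but goodness is preserved under the projective limit construction in Dawson--G\"artner, so this comes for free.

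\textbf{Stage 2: upgrading to the scaled uniform norm.} Write $\|\cdot\|_s$ for the scaled uniform norm on $\mathscr{C}^T$. The identity map from $(\mathscr{C}^T, \|\cdot\|_s)$ to $(\mathscr{C}^T, \text{pointwise})$ is continuous, so the $\|\cdot\|_s$-topology is finer; the standard mechanism (see e.g.\ \cite[Corollary 4.2.6]{Dembo1998} or the inverse-contraction-type argument) is that \emph{if} $W^N$ is exponentially tight with respect to $\|\cdot\|_s$, then the pointwise LDP transfers to a $\|\cdot\|_s$-LDP with the \emph{same} speed and the same rate function, provided the rate function is also the right one in the finer topology. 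Concretely: exponential tightness in $\|\cdot\|_s$ gives, for each $M$, a $\|\cdot\|_s$-compact set $K_M$ with $\limsup_N f(N)^{-1}\log\P(W^N \notin K_M) \le -M$; on $K_M$ the two topologies coincide (a continuous bijection from a compact space is a homeomorphism, applied to $\mathrm{id}: (K_M,\|\cdot\|_s) \to (K_M,\text{pointwise})$). Combining the pointwise LDP upper bound on closed sets, intersected with the $K_M$, with the escape-from-$K_M$ estimate yields the $\|\cdot\|_s$ upper bound on $\|\cdot\|_s$-closed sets; the lower bound on $\|\cdot\|_s$-open sets follows because any such set is also open in the coarser... no — here one is careful: a $\|\cdot\|_s$-open set need not be pointwise-open. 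The correct argument for the lower bound is that exponential tightness plus the pointwise LDP forces the pointwise rate function to be good and then the lower bound in the finer topology follows from the lower bound in the coarser topology combined with the fact that the rate function's finiteness set, and more precisely every sub-level set, is $\|\cdot\|_s$-compact hence the relevant approximation of points by pointwise-neighbourhoods can be done inside a compact set where the topologies agree. This is exactly the content of the inverse contraction principle / \cite[Corollary 4.2.6]{Dembo1998}, which I would simply invoke.

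\textbf{Main obstacle.} The genuinely delicate step is Stage 2: verifying that the rate function produced by Dawson--G\"artner in the pointwise topology is still the rate function governing the LDP in the \emph{stronger} scaled-uniform topology, rather than some lower-semicontinuous regularization. The resolution is precisely that exponential tightness is the hypothesis that makes this work — under exponential tightness in $\|\cdot\|_s$, the pointwise-topology good rate function automatically has $\|\cdot\|_s$-compact sub-level sets (they are pointwise-compact subsets of $\mathscr{C}^T$ contained, up to the exponential tightness estimate, in the $\|\cdot\|_s$-compact $K_M$, and on $K_M$ the topologies coincide), and then the identity-map inverse-contraction argument applies cleanly. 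So the crux is to set up the interplay ``finer topology + exponential tightness in the finer topology + LDP in the coarser topology $\Rightarrow$ LDP in the finer topology with the same rate function'' and cite the appropriate version of the inverse contraction principle; everything else is bookkeeping about projective limits and coordinate projections.
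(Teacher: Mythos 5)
Your proposal is correct and takes essentially the same route as the paper: Stage 1 is the Dawson--G\"artner projective-limit argument (with the identification of $\mathscr{C}^T$ under pointwise convergence as the projective limit of the finite-dimensional marginals), and Stage 2 is the inverse contraction principle applied to the identity map from the scaled-uniform topology to the pointwise topology, using the assumed exponential tightness. The paper cites Theorem 4.2.4 of Dembo and Zeitouni where you cite Corollary 4.2.6, but these are the same mechanism and your (somewhat more discursive) justification of why the rate function carries over is sound.
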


\begin{proof}
We firstly define a partial ordering on $\mathscr{J}$ by $i =\left( s_1, \dots , s_{\lvert i \rvert} \right) \leq j =\left( t_1, \dots , t_{\lvert j \rvert} \right )$, $i,j
\in \mathscr{J}$, if and only if for each $l$ there exists $q(l)$ such that $s_l =t_{q(l)}$. For a fixed $T$ we can define the projection
$p^T_{ij} : \mathbb{R}^{\lvert j \rvert} \rightarrow \mathbb{R}^{\lvert i \rvert}$
for $i\leq j \in \mathscr{J}$ in the natural way. We now define
$\tilde{\mathscr{C}}^T$ to be the projective limit of $\left\{
\mathscr{Y}_j = \mathbb{R}^{\lvert j \rvert} \right \}_{j\in
\mathscr{J}}$ with respect to the projections $p_{ij}^T$ for a
fixed $T$. The spaces $\tilde{\mathscr{C}}^T$ and $\mathscr{C}^T$
may be identified with each other. This can be seen by considering
$f \in \mathscr{C}^T$, which then corresponds to $(p^T_j (f))_{j\in
\mathscr{J}}$ which belongs to $\tilde{\mathscr{C}}^T$ since
$p^T_i(f)=p_{ij}^T(p_j^T(f))$ for $i\leq j \in \mathscr{J}$. Also, for
$\mathbf{x} \in  \tilde{\mathscr{C}}^T$ we can see that this corresponds to $f \in
\mathscr{C}^T$, where $f(t)= x_k$ if $t \in [s_k, s_{k+1})$ for $t>0$ and $f(0)=0$. In
addition, the projective topology on $\tilde{\mathscr{C}}^T$ is
equivalent to the pointwise convergence topology on
$\mathscr{C}^T$. Therefore we can use the Dawson-G\"artner
theorem \cite[Theorem 4.6.1]{Dembo1998} to find a large deviations principle in $\mathscr{C}^T$ with the
topology of pointwise convergence. The good rate function for this
large deviations principle is
\begin{equation*}
I_T(f) =\sup_{j\in \mathscr{J}, 0<T'\leq T} I_{j,T'}(f)\,.
\end{equation*}

For the second part of the lemma we make use of the inverse contraction principle \cite[Theorem 4.2.4]{Dembo1998}, since $W^N$ is exponentially tight in $\mathscr{C}^T$ with the scaled uniform norm. We use the identity function mapping on $\mathscr{C}^T$ from the topology of pointwise convergence to the uniform topology to give the required result. 
\end{proof}

We thus have a large deviations principle for sample paths of $[0,T)$. Now we will extend it to those on $[0,\infty)$. Again, we use the Dawson-G\"artner theorem to carry out the extension. 

\begin{lemma}\label{lem::infinitesp}
Let $W^N$ be a stationary random process on the space $\mathscr{C}$ of continuous functions of $\mathbb{R}^+$. Assume that, given $T>0$, the random variables $W^N |_T$, the restriction of $W^N$ to the interval $[0,T)$, obey a large deviations principle on $\mathscr{C}^T$ with the uniform topology, rate $f(N)$ and good rate function $I_T$. Assume also that, given $x > \mathbb{E}(W^N(0,1))=\lambda$ and $B,d>0$, there exists $t_{d,x}>0$ such that
$$\limsup_{N\rightarrow \infty} \frac{1}{f(N)} \log \mathbb{P} \left( \sup_{t> t_{d,x}} W^N(0,t) -xt > B\right) <-d,$$
and a similar inequality holds for $x< \mathbb{E}(W^N(0,1))$. Then $W^N$ obeys a large deviations principle on $\mathscr{C}$ with topology induced by the scaled uniform norm $|| \cdot ||_s$ with rate $f(N)$ and good rate function $I(x) = \sup_T I_T(x)$.
\end{lemma}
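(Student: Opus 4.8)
The plan is to assemble the infinite-time principle from three ingredients: a projective-limit (Dawson--G\"artner) argument giving the principle in a coarse topology, an exponential tightness estimate built from the tail hypothesis, and the inverse contraction principle to combine the two.

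First I would set up the projective structure. Identify $\mathscr{C}$ with the projective limit of $\{\mathscr{C}^{T}\}_{T\in\mathbb{N}}$ under the restriction maps $x\mapsto x|_{[0,T)}$; the associated projective topology is that of uniform convergence on compact sets. On each bounded interval the scaled uniform norm is equivalent to the ordinary uniform norm, so the hypothesis furnishes, for every $T$, a large deviations principle for $W^{N}|_{T}$ on $\mathscr{C}^{T}$ with rate $f(N)$ and good rate function $I_{T}$. Since the $W^{N}|_{T}$ are all images of the single process $W^{N}$ under the continuous restriction maps, the Dawson--G\"artner theorem \cite[Theorem 4.6.1]{Dembo1998} applies directly and yields a large deviations principle for $W^{N}$ on $\mathscr{C}$ with the topology of uniform convergence on compacts, rate $f(N)$, and good rate function $x\mapsto\sup_{T}I_{T}(x|_{[0,T)})=\sup_{T}I_{T}(x)=I(x)$; no separate consistency check is required because all finite-dimensional marginals come from a common process.

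The substantive step is to upgrade this to the scaled uniform topology, for which I would prove that $W^{N}$ is exponentially tight with respect to $\|\cdot\|_{s}$. A subset of $\mathscr{C}$ is relatively compact for $\|\cdot\|_{s}$ precisely when its restriction to each $[0,m]$ is uniformly bounded and equicontinuous and, in addition, $\sup_{t\geq m}|x(t)|/(1+t)$ tends to zero uniformly over the set as $m\to\infty$. I would therefore build the compact sets $K_{L}$ as intersections over $m\in\mathbb{N}$ of two families: compacts $\Gamma_{m}\subset\mathscr{C}^{m}$ supplied by the exponential tightness of the restrictions $W^{N}|_{m}$ on $\mathscr{C}^{m}$ (this is exactly the ingredient already needed, via Lemma \ref{lem::finitesp}, to obtain the finite-interval principle in the uniform topology, so it is available in every application), together with ``scaled tails'' $\{x:\sup_{t\geq m}|x(t)|/(1+t)\leq\delta_{m}\}$ for a sequence $\delta_{m}$ decreasing to the asymptotic drift of $W^{N}$ (which is zero for the centred processes to which the lemma is applied). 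The probability of leaving a scaled tail is controlled by the hypothesis: for $\delta$ above the asymptotic drift, $\{\sup_{t\geq m}W^{N}(0,t)/(1+t)>\delta\}\subseteq\{\sup_{t\geq m}(W^{N}(0,t)-\delta t)>\delta\}$, whose probability decays at rate $f(N)$ faster than any prescribed $d$ once $m\geq t_{d,\delta}$, and the matching lower-tail bound is obtained identically from the companion inequality for $x$ below the drift. Choosing the level $d$ at stage $m$ to grow with $m$ (for instance $d=L+m$) and estimating $\P(W^{N}\notin K_{L})$ by a union bound over $m$, the resulting geometric series gives $\limsup_{N}f(N)^{-1}\log\P(W^{N}\notin K_{L})\leq -L$, which is the required exponential tightness.

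Finally, the identity map from $(\mathscr{C},\|\cdot\|_{s})$ onto $\mathscr{C}$ equipped with uniform convergence on compacts is continuous (the former topology is the finer) and injective, so, combining the exponential tightness just established with the principle from the Dawson--G\"artner step, the inverse contraction principle \cite[Theorem 4.2.4]{Dembo1998} delivers a large deviations principle for $W^{N}$ in $(\mathscr{C},\|\cdot\|_{s})$ with the same rate $f(N)$ and the still-good rate function $I=\sup_{T}I_{T}$. I expect the exponential tightness in the penultimate step to be the main obstacle: one must control the path on every compact interval and its scaled growth near infinity simultaneously, the sole leverage for the latter being the tail hypothesis, so the argument turns on calibrating the truncation levels and the decay rates $\delta_{m}$ so that the union bound over $m$ closes; the gap between the asymptotic drift of $W^{N}$ and the threshold in the tail hypothesis is exactly what forces one to work with $\delta$ strictly above the drift and only then let $\delta_{m}$ decrease to it.
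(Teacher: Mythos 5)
Your proposal is correct and takes essentially the same route as the paper: Dawson--G\"artner for the LDP in the projective (uniform-on-compacts) topology, construction of compact sets in the scaled uniform norm by intersecting finite-interval compacts (from the good-rate-function LDP on the Polish spaces $\mathscr{C}^T$, which gives exponential tightness of the restrictions) with tail constraints supplied by the hypothesis, and the inverse contraction principle to transfer to the finer topology. The paper's $L_\alpha$ and $K_\alpha$ are precisely your $\Gamma_m$-intersections and scaled-tail sets, and its verification of compactness of $M_\alpha$ (subsequence in the projective topology, limit shown to satisfy the scaled-tail constraints, then convergence upgraded to $\|\cdot\|_s$) is the Arzel\`a--Ascoli-type characterization you invoke.
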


\begin{proof}
We consider the projections $q_{st}: \mathscr{C}^t \rightarrow \mathscr{C}^s$, for $s \leq t \in \mathbb{R}^+$, which are the restrictions of the
functions to the interval $[0,s)$. This means that the projective limit space
is $\mathscr{C}$ with the projective limit topology. So we can
apply the Dawson-G\"artner theorem 
\cite[Theorem 4.6.1]{Dembo1998}, which shows that
$Y^N$ satisfies a large deviations principle in $\mathscr{C}$ with rate $f(N)$ and good rate
function $I(x) =\sup_{T\in \mathbb{R}^+} I_T(x)$.

We now strengthen the topology from the projection topology to that induced by the scaled uniform norm. We start by proving that $W_N$ is exponentially tight in $\mathscr{C}$ with the scaled uniform norm. We will define two groups of sets, $L_\alpha$ and $K_\alpha$. Firstly, we
know that $W^N|_T$ satisfies a large deviations
principle in the space $\mathscr{C}^T$ with the uniform norm, which is a Polish space. Thus, it is exponentially tight in this space,
which means that there is a family of compact sets $L_\alpha^T$
which have the property that
\begin{displaymath}
\limsup_{N\rightarrow \infty} \frac{1}{f(N)} \log \mathbb{P}(W^N|_T \notin L_\alpha^T)<-\alpha\,.
\end{displaymath}
Let $\bar{L}_\alpha^T$ be the smallest compact set with this
property. Then $\bar{L}_\alpha^{T_2} \subset \bar{L}_\alpha^{T_1}$
if $T_2>T_1$. To see this, assume for contradiction that there is a
function $x$ which is in $\bar{L}_\alpha^{T_2}$ but not in
$\bar{L}_\alpha^{T_1}$. Since it is not in $\bar{L}_\alpha^{T_1}$
we must have that $I^{T_2} (x) >\alpha$. Therefore it must be the
limit of a sequence of functions in $\bar{L}_\alpha^{T_2}$,
otherwise it could be removed and the set would still be compact
and have the required property. This cannot be the case, as the
functions truncated to the interval $[0,T_1)$ must also tend to
the limit $x$ truncated to $[0,T_1)$. This shows that $x \in
\bar{L}_\alpha^{T_1}$, the required contradiction. Hence there is no $x$ in $\bar{L}_\alpha^{T_2}$ but not in
$\bar{L}_\alpha^{T_1}$.

Define
$L_\alpha=  \bigcap_{T\in \mathbb{N}} \bar{L}_\alpha^T$, which is
compact in the projection topology and has the following property:
\begin{displaymath}
\limsup_{N\rightarrow \infty} \frac{1}{f(N)} \log \mathbb{P}(W^N \notin L_\alpha)<-\alpha\,.
\end{displaymath}
By assumption there exist $t_i$ such that
\begin{displaymath}
\limsup_{N \rightarrow \infty} \frac{1}{f(N)} \log
\mathbb{P}\left(\sup_{t> t_i} [W^N(0,t)-(\lambda+\epsilon_i)t]\geq 1\right)
\leq -\alpha\,,
\end{displaymath}
where $\epsilon_i= 1/ i$. Let $t_i^+$ be the minimum $t$
such that this holds. Also, let $t_i^-$ be such that
\begin{displaymath}
\limsup_{N \rightarrow \infty} \frac{1}{f(N)} \log
\mathbb{P}\left(\sup_{t> t_i^-} [W^N(0,t)-(\lambda-\epsilon_i)t]\leq 1\right)
\leq -\alpha\,.
\end{displaymath}
We define $K_\alpha$ to be the set of continuous functions which have the following property:
$$ x(t)< (\lambda + 1/i)t +1 \textrm{  for $t_i^+<t<t_{i+1}^+$}\,, $$
and
$$ x(t)> (\lambda - 1/i)t -1 \textrm{  for $t_i^-<t<t_{i+1}^-$}\,. $$
Obviously we then have that
$$\limsup_{N\rightarrow \infty} \frac{1}{f(N)} \log \mathbb{P}(W^N\notin K_\alpha)<-\alpha\,.$$
We now define $M_\alpha = K_\alpha \cap L_\alpha$, which has the property that
$$\lim_{\alpha \rightarrow \infty} \limsup_{N \rightarrow \infty} \frac{1}{f(N)} \log \mathbb{P}(W^N \notin M_\alpha) =-\infty\,.$$
Also, we know that $L_\alpha$ is compact with the projection topology. Hence, given a sequence $x^{(n)}$ in  $M_\alpha$, we can find a subsequence $x^{(j)}$ which converges to some $x$ in the projective topology. In order to complete the proof we need to show $x \in M_\alpha$ and then that $x^{(j)} \rightarrow x$ in the scaled uniform topology.

Since $x^{(j)} \rightarrow x$ uniformly on compact intervals,
$$
\lim_{j \rightarrow \infty} \sup_{t\in [0,T]} \left |
\frac{x^{(j)}(t)}{1+t} - \frac{x(t)}{1+t} \right | =0 \textrm{  for
every $T>0$}\,.
$$ 
Also, since $x^{(j)} \in K_\alpha$, we have $$\left |
\frac{x^{(j)}(t)}{t}-\lambda \right | \leq \frac{1}{t}
+\epsilon_t \textrm{ for all }t> \max( t_1^-, t_1^+)\,,$$ 
where
$\epsilon_t = 1/\min\{ i-1; t_i^+>t, t_i^->t\}$, which tends to 0 as
$t\rightarrow \infty$.  This shows that $$\left |
\frac{x(t)}{t}-\lambda \right | \leq \frac{1}{t}
+\epsilon_t \textrm{ for all }t> \max( t_1^-, t_1^+)\,,$$ 
thus giving
$x\in M_\alpha$. Finally, given $\epsilon>0$ we choose $i$ such
that $1/i <\epsilon$ and then select $T>0$ such that $T>t_i^+$ and
$T>t_i^-$. Then, for $j$ sufficiently large, we have
\begin{align*}
\| x^{(j)}-x \|  \leq\sup_{t\leq T} \left | \frac{x^{(j)}(t)}{1+t} - \frac{x(t)}{1+t} \right | +\sup_{t>T} \left | \frac{x^{(j)}(t)}{1+t} - \frac{x(t)}{1+t} \right |\leq 2\epsilon.
 \end{align*}
\end{proof}

We now make use of these lemmas to prove the large deviations principles for the different scalings of interest. To do this we need to carry out the following three tasks in each setting:
\begin{enumerate}
\item Calculate $\Omega_{\alpha, \beta}(\theta, j,T)$ and check the necessary conditions.
\item Calculate the corresponding rate function.
\item Check that $\tilde{\bar{A}}_{\alpha, \beta}$ restricted to the interval $[0,T)$ is exponentially tight in the space $\mathscr{C}^T$ with the uniform norm.
\end{enumerate}

\subsection{The case \texorpdfstring{$\alpha=\beta=1$}{}}

In the case $\alpha=\beta=1$ our proof is relatively brief, as nearly all of the above three points are dealt with directly by the assumptions placed on the process. 

\begin{pfof}{\textbf{Theorem \ref{thm::ldlb}}.}

We start by examining 
$$\Omega_{1,1}(\theta, j,T)= \log \mathbb{E} \left( e^{\sum_{i=1}^{\lvert j \rvert}  \theta_i A(j_{i-1} T,  j_{i} T )}\right) - \lambda\mathbb{E}(Y)\sum_{i=1}^{\lvert j \rvert}\theta_i T(j_i -j_{i-1}).$$ 
This is the finite distributional log moment generating function for the process $A$, minus a linear function of $\theta$. As long as the log moment generating function satisfies the necessary conditions, $\Omega_{1,1}$ will also obey them. As it is a log moment generating function we automatically have that $\Omega_{1,1}(0, j,T) =0$ and that it is differentiable on the finite domain. Then Assumption \ref{ass::ldlb}.3 gives the necessary steepness condition. 

By Lemma \ref{lem::finiteA}, the finite distributions of $\tilde{A}^N_{1,1}$ and $\tilde{\bar{A}}^N_{1,1}$ obey large deviations principles.
Now, we know by Assumption \ref{ass::ldlb}.4 that $\tilde{\bar{A}}^N_{1,1}$ is  exponentially tight in $\mathscr{C}^T$. By Lemma \ref{lem::finitesp} we get the sample path result for finite time which we can extend using Lemma \ref{lem::infinitesp} in conjunction with Lemma \ref{lem::lt2} to a sample path large deviations principle on $\mathscr{C}$ with the topology induced by the scaled uniform norm $||\cdot||_s$. The issue is then showing that we can restrict ourselves to $\mathscr{C}_{0}$; this is done with Lemma \ref{lem::C0} and Lemma 4.1.5 in \cite{Dembo1998}. Then, as $\mathscr{C}_0$ is a subspace of $\mathcal{D}_0$, we can expand the space to this. Finally, the exponential tightness of $\tilde{\bar{A}}_{1,1}^N$ and $\tilde{A}^N_{1,1}$ gives the result.
\end{pfof}

\subsection{The case \texorpdfstring{$0< \alpha <\beta =1$}{}}

Consider the case $0< \alpha <\beta =1$. In this section we outline the proof of our Theorem \ref{thm::ldsb}, which has previously been established by Cruise \cite{Cruise2009a}, within our framework.  We use the following two lemmas.  The first gives the log-moment generating function and is proved analogously to \ref{lem::ratemsb} below. The second establishes exponential tightness of the scaled process, and may be proved similarly to Lemma \ref{lem::exptmsb}.

\begin{lemma}\label{lem::ratesb}
For any $0<\alpha <1$, we have that $$ \Omega_{\alpha, 1}(\theta, j,T)=T \sum_{i=1}^{\lvert j \rvert}  (\lambda (M(\theta_i)-1) -\lambda\mathbb{E}(Y)\theta_i )(j_{i}-j_{i-1})\,.$$ Furthermore, we have
$$I_{\alpha,1}(x)= \begin{cases}
\int_0^{\infty} \Omega^*(\dot{x}+\lambda \mathbb{E}(Y)) dt &\textrm{ if $x(0)=0$ and $x\in\mathscr{AC}$\.,} \\
\infty &\textrm{otherwise\,,}
\end{cases}
$$
where $\Omega^*(y)=\sup_{\theta \in \mathbb{R}} \big[ \theta y - \lambda(M(\theta)-1) \big]$.
\end{lemma}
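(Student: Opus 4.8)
The plan is to execute the three tasks listed at the end of Section~\ref{sec::framework} for the scaling $(\alpha,1)$: compute $\Omega_{\alpha,1}$ and verify the G\"artner--Ellis hypotheses of Lemma~\ref{lem::finiteA}, then identify the associated rate function. To compute $\Omega_{\alpha,1}(\theta,j,T)$ I would substitute the scaled process~\eqref{eqn::scaling} (with $\beta=1$) into the defining expression, use independence of the $N$ sources to write the relevant moment generating function as a single-source one raised to the power $N$, and note that the single source is now examined over an interval of length $N^{\alpha-1}(j_i-j_{i-1})T$, which tends to $0$ since $\alpha<1$. Lemma~\ref{lem::pple} then applies and gives $t^{-1}\log\mathbb{E}[e^{\langle\theta,\mathbf{A}^{\mathbf{j},t}\rangle}]\to\lambda\sum_i(j_i-j_{i-1})(M(\theta_i)-1)$; the powers of $N$ balance at speed $N^{\alpha}$, and after subtracting the linear drift term one obtains $\Omega_{\alpha,1}(\theta,j,T)=T\sum_i(\lambda(M(\theta_i)-1)-\lambda\mathbb{E}(Y)\theta_i)(j_i-j_{i-1})$. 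The delicate point here is that Lemma~\ref{lem::pple} only supplies this limit for $\theta_i$ in a possibly small interval $[0,\theta^*]$, whereas Lemma~\ref{lem::finiteA} requires $\Omega_{\alpha,1}$ to be finite and essentially smooth on all of $\mathbb{R}^{|j|}$; since $M$ is finite everywhere (Assumption~\ref{ass::ldsb}.3) the gradient of $\Omega_{\alpha,1}$ stays bounded at any finite boundary point, so steepness forces $\mathcal{D}_{\Omega_{\alpha,1}}=\mathbb{R}^{|j|}$. For $\theta_i\le 0$ this is automatic because $A\ge0$; for $\theta_i>\theta^*$ one re-runs the estimate of Lemma~\ref{lem::pple}, bounding $\sum_{k\ge2}p_t(k)M(\theta_i)^k=\mathbb{E}[M(\theta_i)^{X(0,t)}1_{\{X(0,t)\ge2\}}]$ via Assumption~\ref{ass::ldsb}.3 and the tail term $\mathbb{E}[e^{\theta_i A(0,t)}1_{\{X(0,t)>K\}}]$ via Assumption~\ref{ass::ldsb}.1.

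With the formula in hand the hypotheses of Lemma~\ref{lem::finiteA} are immediate: $\Omega_{\alpha,1}(0,j,T)=0$ since $M(0)=1$; $\Omega_{\alpha,1}$ is $C^\infty$ in $\theta$ on $\mathbb{R}^{|j|}$ because $M$ is finite, hence smooth, on $\mathbb{R}$; and steepness is vacuous once the domain is all of $\mathbb{R}^{|j|}$. Lemma~\ref{lem::finiteA} then produces the finite-dimensional rate function $I_{j,T}(\mathbf{z})=\sup_{\theta}\{\sum_i(z_i-z_{i-1})\theta_i-\Omega_{\alpha,1}(\theta,j,T)\}$. Since $\Omega_{\alpha,1}$ is a sum over $i$ of functions of the single variable $\theta_i$, this supremum decouples; absorbing the linear term into the argument and rescaling by $T(j_i-j_{i-1})$ identifies the $i$-th summand as $T(j_i-j_{i-1})\,\Omega^*\!\big(\tfrac{z_i-z_{i-1}}{T(j_i-j_{i-1})}+\lambda\mathbb{E}(Y)\big)$, with $\Omega^*$ as in~\eqref{gamma}.

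It then remains, via Lemmas~\ref{lem::finitesp} and~\ref{lem::infinitesp}, to show that $\sup_{j\in\mathscr{J},\,T'>0}I_{j,T'}(x)$ equals $\int_0^\infty\Omega^*(\dot{x}+\lambda\mathbb{E}(Y))\,dt$ when $x\in\mathscr{AC}$ with $x(0)=0$ and $+\infty$ otherwise; this Mogulskii-type identification (cf.\ \cite[Theorem~5.1.2]{Dembo1998}) is the main obstacle. For the upper bound one uses that $\Omega^*$ is convex, so Jensen's inequality on each subinterval $[T'j_{i-1},T'j_i]$ yields $I_{j,T'}(x)\le\int_0^{T'}\Omega^*(\dot{x}+\lambda\mathbb{E}(Y))\,dt\le\int_0^\infty\Omega^*(\dot{x}+\lambda\mathbb{E}(Y))\,dt$, the last step because $\Omega^*\ge0$. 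For the matching lower bound one refines the partition $j$ and lets $T'\to\infty$: for absolutely continuous $x$ the resulting Riemann-type sums increase to the integral (using lower semicontinuity of $\Omega^*$ and monotone convergence in $T'$), while for $x\notin\mathscr{AC}$ one builds partitions along which $I_{j,T'}(x)\to\infty$, exploiting the superlinear growth of $\Omega^*$, which is valid here because $M(\theta)<\infty$ for all $\theta$ so that $\Omega^*(y)/y\to\infty$ as $y\to\infty$. Assembling these pieces gives the stated $I_{\alpha,1}$.
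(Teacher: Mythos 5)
Your proposal follows the paper's intended route. The paper states that Lemma~\ref{lem::ratesb} is ``proved analogously to~\ref{lem::ratemsb}'', and you transcribe that argument faithfully: Lemma~\ref{lem::pple} (in place of Lemma~\ref{lem::pple3}) yields the $\Omega_{\alpha,1}$ formula after the substitution $t = N^{\alpha-1}T$; the finite-dimensional supremum decouples over $i$ and, after absorbing the drift, each summand is $T(j_i-j_{i-1})\,\Omega^*\!\bigl(\tfrac{z_i-z_{i-1}}{T(j_i-j_{i-1})}+\lambda\mathbb{E}(Y)\bigr)$; Jensen's inequality gives the upper bound, and the Riemann-sum/Fatou argument plus the partition-refinement argument for non-absolutely-continuous paths give the matching lower bound and the $+\infty$ case, exactly as in the displayed proof of Lemma~\ref{lem::ratemsb}.

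One point is imprecise. Your extension of the $\Omega_{\alpha,1}$ formula to $\theta_i>\theta^*$ appeals to Assumption~\ref{ass::ldsb}.1 to control $\mathbb{E}\bigl[e^{\theta_i A(0,t)}1_{\{X(0,t)>K\}}\bigr]$, but that assumption only furnishes the estimate at the single value $\theta_0$; it does not directly control the tail for $\theta_i>\theta_0$, and the preceding line ``steepness forces $\mathcal{D}_{\Omega_{\alpha,1}}=\mathbb{R}^{|j|}$'' is circular, since steepness is a hypothesis of Lemma~\ref{lem::finiteA} to be verified, not a given. However, the paper's own sketch elides the same issue (Lemma~\ref{lem::pple} is stated only uniformly on $[0,\theta^*]^n$ with $\theta^*\le\theta_0$, and Lemma~\ref{lem::ratemsb} inherits the analogous restriction from Lemma~\ref{lem::pple3}), so this is a shared loose end rather than a deviation from the paper's approach.
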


\begin{lemma}\label{lem::exptsb}
For $A$ which obeys Assumptions \ref{ass::ldsb} and with $0<\alpha<1$, we have that $\tilde{\bar{A}}^N_{\alpha,1}$ is exponentially tight on $\mathscr{C}^T$ with the uniform topology.
\end{lemma}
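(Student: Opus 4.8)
I would prove exponential tightness directly, using the Arzelà--Ascoli description of the compact subsets of $\mathscr{C}^T$ (on $[0,T]$ the uniform and scaled uniform norms are equivalent, so it suffices to work with $\|\cdot\|_\infty$). The heart of the matter is a small-oscillation estimate: for every $d>0$ and $\rho>0$ there should be a mesh $\delta=\delta(d,\rho)>0$ with
\[
\limsup_{N\to\infty}\frac{1}{N^\alpha}\log\mathbb{P}\Bigl(\sup_{s,t\in[0,T],\,|s-t|\le\delta}\bigl|\tilde{\bar{A}}^N_{\alpha,1}(0,s)-\tilde{\bar{A}}^N_{\alpha,1}(0,t)\bigr|>\rho\Bigr)\le-d .
\]
Granting this, for each level $L$ set $\delta_{m,L}=\delta(L+m,1/m)$ and take
\[
K_L=\Bigl\{x\in\mathscr{C}^T:\ x(0)=0,\ \sup_{|s-t|\le\delta_{m,L}}\bigl|x(s)-x(t)\bigr|\le 1/m\ \text{ for all }m\ge1\Bigr\} ,
\]
which is equicontinuous and, since $x(0)=0$, $\|\cdot\|_\infty$-compact; a union bound over $m$, with geometrically summable tails $e^{-N^\alpha m}$, then gives $\limsup_N N^{-\alpha}\log\mathbb{P}(\tilde{\bar{A}}^N_{\alpha,1}\notin K_L)\le-L$, i.e.\ exponential tightness at rate $N^\alpha$.

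\textbf{The oscillation estimate.} Here I would exploit monotonicity. Since $\bar{A}^{\oplus N}$ is the polygonal interpolation of the non-decreasing step function $A^{\oplus N}$ it is itself non-decreasing, so on any rescaled-time subinterval of length $\le\delta$ the increment of $\tilde{\bar{A}}^N_{\alpha,1}$ is dominated, up to the linear drift $\lambda\mathbb{E}(Y)\delta$ and at most one extra mark, by the largest increment of $N^{-\alpha}A^{\oplus N}$ over the grid of $\lceil T/\delta\rceil$ cells of mesh $\delta$ covering $[0,T]$. By stationarity of $A$ the problem collapses to bounding $\mathbb{P}\bigl(N^{-\alpha}A^{\oplus N}(0,N^{\alpha-1}\delta)>\lambda\mathbb{E}(Y)\delta+\epsilon\bigr)$ together with a single-mark term $\mathbb{P}(Y>\epsilon N^\alpha)$; the latter, and more generally the contribution of any atypically large mark in the window, is super-exponentially negligible at rate $N^\alpha$ because $M(\theta)<\infty$ for every $\theta$ (Assumption \ref{ass::ldsb}.3, using arbitrarily large Chernoff parameters). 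For the superposition term one uses the Chernoff bound together with Lemma \ref{lem::pple} (case $n=1$), which gives $\lim_{t\to0}t^{-1}\log\mathbb{E}[e^{\theta A(0,t)}]=\lambda(M(\theta)-1)$ and hence shows that on the scale $N^\alpha$ the quantity $A^{\oplus N}(0,N^{\alpha-1}\delta)$ behaves as a compound Poisson sum with Poisson parameter $N^\alpha\lambda\delta$ and jump law $Y$; this yields a Cramér-type bound with exponent $N^\alpha\,\delta\,\Omega^*\!\bigl(\lambda\mathbb{E}(Y)+\epsilon/\delta\bigr)$, $\Omega^*$ as in (\ref{gamma}). Since $M$ is finite everywhere, $\Omega^*$ is superlinear, so $\delta\,\Omega^*(\lambda\mathbb{E}(Y)+\epsilon/\delta)\to\infty$ as $\delta\downarrow0$; multiplying by the cell count $\lceil T/\delta\rceil$ leaves the rate tending to $-\infty$, which is exactly the strength the first paragraph needs. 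This is the small-time, many-sources counterpart of the moderate-deviations Lemma \ref{lem::exptmsb}, and parallels the tightness argument used in the large-buffer cases $\beta\ge1$ (together with Lemma \ref{poly}).

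\textbf{Main obstacle.} The step I expect to take most care over is upgrading the Chernoff estimate for the superposition from the range of parameters on which Lemma \ref{lem::pple} gives uniform control of the small-time transform — where only the Legendre transform restricted to a bounded interval of $\theta$ emerges, a function that is merely asymptotically linear and therefore too weak to beat the cell count $\lceil T/\delta\rceil$ — to the full, superlinear $\Omega^*$. This is precisely where the large-mark truncation earns its keep: after excising the super-exponentially rare windows containing an oversized mark, the remaining increments have bounded summands, so the per-source transforms stay finite, the compound-Poisson comparison survives a Chernoff parameter that grows with $N$, and the full rate function is recovered in the limit. Fitting the truncation level, the Chernoff parameter, and the moduli $\delta_{m,L}$ together so that all the error terms are controlled and the displayed oscillation bound holds for every $d$ is the only genuinely technical part; the rest is routine Chernoff bookkeeping of the kind already carried out for the cases $\beta\ge1$.
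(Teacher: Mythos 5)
You take a genuinely different route from the paper. The paper's proof of this lemma is inherited from the proof of Lemma \ref{lem::exptmsb}: one takes $K_\gamma(T)$ to be (the closure of) the $(\gamma+1)$-sublevel set of the rate function, proves compactness via Arzel\`a--Ascoli using convexity and superlinearity of $\Omega^*$, and obtains the tail bound $\limsup_N N^{-\alpha}\log\mathbb{P}(\tilde{\bar{A}}^N_{\alpha,1}|_T\notin K_\gamma(T))\le-\gamma$ directly from the LDP upper bound already available in the topology of pointwise convergence (first half of Lemma \ref{lem::finitesp}), together with lower semicontinuity of $I_T$. No Chernoff bookkeeping outside what the G\"artner--Ellis/Dawson--G\"artner machinery already supplies is needed. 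You instead estimate the modulus of continuity directly by a Chernoff/union bound over a grid of cells and define the compact sets via equicontinuity constraints. Both approaches end with Arzel\`a--Ascoli, but the probability estimate is reached by genuinely different means; your route is more self-contained but shoulders extra analytic work.

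That extra work is where the proposal breaks down, at precisely the point you flag. With a fixed $\rho>0$ and the Chernoff parameter capped at $\theta_0$ (the only value at which Assumption \ref{ass::ldsb}.1 gives short-time control of the transform), the per-cell exponent $-\theta(\rho+\lambda\mathbb{E}(Y)\delta)+\delta\lambda(M(\theta)-1)$ tends to $-\theta_0\rho$ as $\delta\downarrow0$; so you cannot reach an arbitrary level $-d$ with $d>\theta_0\rho$ by shrinking $\delta$, and the cell count (being only $O(1/\delta)$) cannot help. Truncating the marks does not repair this: the obstruction is the \emph{count} process, not the mark sizes. After truncating marks at level $M$ one has $\mathbb{E}[e^{\theta A^{\rm trunc}(0,t)}]\le\mathbb{E}[e^{\theta M X(0,t)}]$, and Assumptions \ref{ass::ldsb} place no constraint on exponential moments of $X(0,t)$ beyond what is implicit through $A(0,t)$ at the single exponent $\theta_0$; since the marks $Y$ may be arbitrarily small, controlling $e^{\theta_0 A(0,t)}$ does not control $e^{\theta' X(0,t)}$ for large $\theta'$. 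Nor does excising windows in which some source has $X^{(i)}(0,t_N)>K$ save the argument: the union bound gives $N\,\mathbb{P}(X(0,t_N)>K)=N\cdot o(t_N)=o(N^\alpha\delta)$, which is not $o(e^{-dN^\alpha})$ and need not even vanish. To push through your programme one would need to strengthen Assumption \ref{ass::ldsb}.1 to hold for all $\theta_0$, or impose exponential moments on $X(0,t)$ directly; as stated, the assumptions do not support the Chernoff estimate at the growing exponent your oscillation bound requires. This is the reason the paper works instead from the pointwise-topology LDP, where the needed modulus information is already encoded in $I_T$.
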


Using these lemmas we have the large deviations result.
\begin{pfof}{\textbf{Theorem \ref{thm::ldsb}}.}

Firstly, Lemma \ref{lem::finiteA} show us that we have large deviations principles for the finite distributions of $\tilde{A}_{\alpha,1}^N$ and $\tilde{\bar{A}}_{\alpha,1}^N$. Now we extend this to a sample path result on the interval $[0,T)$ for any $0<T<\infty$ for $\tilde{\bar{A}}_{\alpha,1}^N$ in the space $\mathscr{C}^T$ with the uniform topology using Lemmas \ref{lem::finitesp} and \ref{lem::exptsb}. This is then extended to the whole of $\mathscr{C}$ with the scaled uniform norm by Lemma \ref{lem::infinitesp} and \ref{lem::lt2}. Now, for all $N$ we have that $\tilde{\bar{A}}^N_{\alpha,1}$ is in $\mathscr{C}_0$; by Lemma \ref{lem::C0} we can use Lemma 4.1.5 in \cite{Dembo1998} to restrict to this space. We  expand $\mathscr{C}_0$ to $\mathcal{D}_0$ and use exponential tightness to get that $\tilde{A}^N_{\alpha,1}$ obeys a sample path large deviations principle and Lemma \ref{lem::ratesb} demonstrates that we have the required rate function.
\end{pfof}

\subsection{The case \texorpdfstring{$1/2< \alpha= \beta <1$}{}}

The proof for the case $1/2< \alpha= \beta <1$ follows along similar lines to the previous results, with the major difference being that we obtain a neat form for $\Omega_{\alpha,\alpha}(t)$ in this setting, which enables a simplified rate function. As mentioned previously, this has the form of a rate function for a Gaussian process which leads us to using the Generalized Schilder's Theorem as proved in \cite{Addie2002}.

\begin{pfof}{\textbf{Theorem \ref{thm::mdlb}}.}

We start by examining 
\begin{equation}\label{eqn::asdf}
\Omega_{\alpha, \alpha}  (\theta, j, T)= \lim_{N \rightarrow \infty} \frac{1}{N^{2(\alpha-1)}} \log \mathbb{E} \left( e^{ \sum_{i=1}^{|j|} N^{\alpha -1} (\theta_i A(j_{i-1}T, j_i T) -\theta_i T(j_i- j_{i-1}) \lambda \mathbb{E}(Y) } \right)\,.
\end{equation}
We use the Taylor expansion to consider this for large $N$, since $N^{\alpha -1}$ is small. This is possible because, for small $\theta$, $\Lambda_t(\theta) < \infty$  for all $t$. This gives
$$(\ref{eqn::asdf})= \lim_{N \rightarrow \infty} \frac{1}{N^{2(\alpha-1)}}  \left( \sum_{i=1}^{|j|} \sum_{l=1}^{|j|} \Gamma(j_i,j_l) \theta_i \theta_l N^{2(\alpha -1)} + o(N^{3(\alpha-1)}) \right)\,,
$$
where $\Gamma$ is the covariance function. This is a quadratic function in $\theta$, which means that it has the necessary conditions for us to apply Lemma \ref{lem::finiteA} to show that the finite distributions of $\tilde{A}^N_{\alpha,\alpha}$ and $\tilde{\bar{A}}^N_{\alpha,\alpha}$ obey large deviations principles. 

Now, we know by Assumption \ref{ass::mdlb}.4 that $\tilde{\bar{A}}^N_{\alpha,\alpha}$ is  exponentially tight in $\mathscr{C}^T$. So we now apply Lemma \ref{lem::finitesp} to get the sample path result for finite time, which we can extend using Lemma \ref{lem::infinitesp} in conjunction with Lemma \ref{lem::lt3} to a sample path large deviations principle on $\mathscr{C}$ with the topology induced by the scaled uniform norm $||\cdot||_s$. The issue is then showing that we can restrict ourselves to $\mathscr{C}_{0}$; this is done with Lemma \ref{lem::C0} and Lemma 4.1.5 in \cite{Dembo1998}. Then, as $\mathscr{C}_0$ is a subspace of $\mathcal{D}_0$, we can expand the space to this. Finally, the exponential tightness of $\tilde{\bar{A}}_{\alpha,\alpha}^N$ and $\tilde{A}^N_{\alpha,\alpha}$ leaves us only the simplified rate function to establish.

We consider the behaviour of the variance function, as this governs the behaviour of $\Gamma$. We note that, since $\Psi^2_{\infty, d\rightarrow t}>0$ and $\Psi^2_{\infty, d\rightarrow t}>0$, we have that $v(t)/t^2 \rightarrow 0$ as $t\rightarrow \infty$ and the variance function is continuous.
We consider a Gaussian process $Z$ with variance function $v$ and mean $0$, which we use to find the rate function.  $Z/N^{1/2}$ obeys a large deviations principle by the Generalized Schilder's Theorem \cite[Theorem 5.2.3]{Dembo1998}. We consider a projection of the process indexed by $T>0$ and $j$ a finite partition of $[0,1]$ given by the vector
$$(Z(Tj_1)-Z(0), Z(Tj_2)-Z(Tj_1),\dots, Z(T)-Z(T j_{|j|-1}))/N^{1/2}\,.$$
This then obeys a large deviations principle with rate $$I_{j,T}(x) = \sup_{\theta} (\langle x,\theta\rangle -\Omega_{\alpha, \alpha}) (\theta,j,T).$$ Since this is the same rate function as for the finite distributions of $\tilde{A}_{\alpha,\alpha}^N$, the uniqueness of rate functions give the rate function for $\tilde{A}_{\alpha,\alpha}^N$.
\end{pfof}

\subsection{The cases \texorpdfstring{$\alpha<\beta <1$}{}, \texorpdfstring{$\alpha + \beta >1$}{} and \texorpdfstring{$0<\alpha< 1$}{}, \texorpdfstring{$\beta >1$}{}}

Finally, we consider the cases (iv) and (v): $\alpha<\beta <1$, $\alpha + \beta >1$ and $0<\alpha< 1$, $\beta >1$.  Here the proofs again follow a similar style to previous cases. We will state the lemmas as for the previous cases, but omit later proofs for brevity; Lemmas 
\ref{lem::ratebg1} and \ref{lem::exptbg1}
may be obtained by analogous arguments to those used for Lemmas \ref{lem::ratemsb}  and \ref{lem::exptmsb}.  We will, however, give some calculations to motivate the form of the rate function in the lightly loaded case, $\beta>1$. 

\begin{lemma}\label{lem::ratemsb}
For a given $0<\alpha<\beta <1$ and $\alpha + \beta >1$,  we have that $$\label{eq:514} \Omega_{\alpha, \beta}(\theta, j,T)=T \sum_{i=1}^{\lvert j \rvert}  \lambda \mathbb{E}(Y^2)\theta_i^2/2\,.$$ Furthermore,
$$I_{\alpha,\beta}(x)= \begin{cases}
\int_0^{\infty} \frac{\dot{x}^2}{2 \lambda \mathbb{E}(Y^2)} dt &\textrm{ if $x(0)=0$ and $x\in\mathscr{AC}$\,,} \\
\infty &\textrm{otherwise\,.} 
\end{cases}
$$
\end{lemma}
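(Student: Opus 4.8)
The plan is to read $\Omega_{\alpha,\beta}$ off Lemma \ref{lem::pple3} after a change of scalings, then to compute its Legendre transform and identify the resulting variational problem with the $L^2$-energy integral. To evaluate $\Omega_{\alpha,\beta}(\theta,j,T)$ I would fix $T>0$ and set $t = t(N) = N^{\alpha-\beta}T$, which tends to $0$ as $N\to\infty$ since $\alpha<\beta$, together with $f(t) = N^{\beta-1}$; writing $N$ in terms of $t$ gives $f(t) = (t/T)^{(\beta-1)/(\alpha-\beta)}$, which is continuous on $\mathbb{R}^+$ and tends to $0$ as $t\to0$ (the exponent $(\beta-1)/(\alpha-\beta)$ is positive, as $\beta-1<0$ and $\alpha-\beta<0$). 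With this choice $f(t)\,t = N^{\beta-1}N^{\alpha-\beta}T = N^{\alpha-1}T$, so the exponent in the definition of $\Omega_{\alpha,\beta}$ is exactly $\langle\theta f(t),\mathbf{A}^{\mathbf{j},t}\rangle - \lambda\mathbb{E}(Y)f(t)\sum_{i=1}^{|j|}\theta_i(j_i-j_{i-1})t$, the quantity controlled by Lemma \ref{lem::pple3} (whose hypotheses hold here, as $A$ obeys Assumptions \ref{ass::mdsb}). Since $t\,f(t)^2 = N^{\alpha-\beta}T\cdot N^{2(\beta-1)} = T\,N^{\alpha+\beta-2}$, Lemma \ref{lem::pple3} gives
\[
\Omega_{\alpha,\beta}(\theta,j,T) = \lim_{N\to\infty}\frac{1}{N^{\alpha+\beta-2}}\log\mathbb{E}[\cdots] = T\lim_{t\to0}\frac{1}{t f(t)^2}\log\mathbb{E}[\cdots] = T\sum_{i=1}^{|j|}(j_i-j_{i-1})\,\frac{\lambda\mathbb{E}(Y^2)}{2}\,\theta_i^2 ,
\]
where $\mathbb{E}[\cdots]$ denotes the expectation in the definition of $\Omega_{\alpha,\beta}$. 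Lemma \ref{lem::pple3} provides this only for $\theta\in[0,\theta^*]^{|j|}$, but the identity extends to all $\theta\in\mathbb{R}^{|j|}$: where $\theta_i\le0$ the relevant moment generating functions are at most $1$ (since $A\ge0$), and where $\theta_i>0$ the vanishing prefactor $N^{\beta-1}$ eventually brings $N^{\beta-1}\theta_i$ into the neighbourhood of $0$ on which $M$ is finite, so the Taylor-expansion argument of Lemma \ref{lem::pple3} runs verbatim, uniformly on compacts. Hence $\Omega_{\alpha,\beta}(\cdot,j,T)$ is a finite smooth quadratic on all of $\mathbb{R}^{|j|}$, so $0$ is interior to its domain, it is differentiable, and it is steep vacuously --- the three hypotheses of Lemma \ref{lem::finiteA}.

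By Lemma \ref{lem::finiteA}, $p^T_j(\tilde{A}^N_{\alpha,\beta})$ then satisfies a large deviations principle with good rate function $I_{j,T} = \Omega_{\alpha,\beta}^{*}$; as $\Omega_{\alpha,\beta}$ is a sum of one-dimensional quadratics the Legendre transform decouples, and a one-line optimisation gives $I_{j,T}(\mathbf z) = \sum_{i=1}^{|j|}\frac{(z_i-z_{i-1})^2}{2\lambda\mathbb{E}(Y^2)\,T(j_i-j_{i-1})}$ with $z_0 := 0$. By Lemma \ref{lem::finitesp}, and Lemma \ref{lem::infinitesp} for the extension to $[0,\infty)$ (legitimate in this scaling by the long-timescale bound of Lemma \ref{lem::lt3}), the sample-path rate function is $\sup_{T>0}\sup_{j\in\mathscr{J},\,0<T'\le T}I_{j,T'}$. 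Substituting $u_i = T'j_i$, so that $T'(j_i-j_{i-1}) = u_i-u_{i-1}$ and $z_i = x(u_i)$, this double supremum runs over all finite increasing sequences $0 = u_0<u_1<\cdots<u_n$ in $\mathbb{R}^+$ and equals
\[
\frac{1}{2\lambda\mathbb{E}(Y^2)}\sup\left\{\sum_{i=1}^n\frac{(x(u_i)-x(u_{i-1}))^2}{u_i-u_{i-1}} : 0 = u_0<u_1<\cdots<u_n\right\} .
\]
By the convex-duality computation behind Mogulskii's theorem (\cite[Theorem 5.1.2]{Dembo1998}) --- Cauchy--Schwarz, $(x(u_i)-x(u_{i-1}))^2\le(u_i-u_{i-1})\int_{u_{i-1}}^{u_i}\dot x^2$, for the upper bound, and step-function approximation of $\dot x$ for the matching lower bound --- this supremum is $\int_0^\infty\dot x(t)^2\,dt$ when $x(0)=0$ and $x\in\mathscr{AC}$ with $\dot x\in L^2$, and $+\infty$ otherwise. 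Moreover a finite value of $\int_0^\infty\dot x^2$ forces $|x(t)|/(1+t)\le\sqrt t\,\|\dot x\|_2/(1+t)\to0$, so such $x$ automatically lie in $\mathcal{D}_0$ and no constraint is lost. Dividing by $2\lambda\mathbb{E}(Y^2)$ gives exactly the asserted form of $I_{\alpha,\beta}$.

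I expect the only genuine obstacle to be the bookkeeping of the first step --- lining up $t = N^{\alpha-\beta}T$, $f(t) = N^{\beta-1}$ and $t f(t)^2 = T N^{\alpha+\beta-2}$ with the drift term of Lemma \ref{lem::pple3}, and verifying that its restriction to $\theta\ge0$ does no harm because $N^{\beta-1}\theta_i\to0$. Everything after that is a routine Legendre transform and the classical $L^2$-energy variational identity, exactly as in the $\alpha=\beta$ argument used to prove Theorem \ref{thm::mdlb}.
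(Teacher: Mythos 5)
Your argument matches the paper's proof: the same reparametrisation $\Delta = TN^{\alpha-\beta}$, $f(\Delta)=N^{\beta-1}$ reduces $\Omega_{\alpha,\beta}$ to an application of Lemma \ref{lem::pple3}, the Legendre transform yields the same decoupled quadratic rate $I_{j,T}$, and the supremum over partitions is identified with the $L^2$-energy integral by the same Jensen/step-function argument underlying Mogulskii's theorem. Your brief justification that the identity extends from $\theta\in[0,\theta^*]^n$ to all of $\mathbb{R}^n$ fills a point the paper leaves implicit, while you are correspondingly terser than the paper on the case of non-absolutely-continuous $x$; both are minor and the proof is essentially identical in structure.
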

\begin{proof}
We have
\[
\Omega_{\alpha, \beta}(\theta, j,T) = \lim_{N\rightarrow \infty}  N^{2-\alpha-\beta} \log \mathbb{E} \left( e^{\sum_{i=1}^{\lvert j \rvert}  N^{\beta-1}\theta_i A( N^{\alpha -\beta} j_{i-1} T, N^{\alpha -\beta} j_{i} T )
 - \theta_i T(j_i -j_{i-1})N^{\alpha -1}\lambda\mathbb{E}(Y)}\right).
\]
Now we let $\Delta = T N^{\alpha -\beta}$, which means $\Delta \rightarrow 0$ as $N \rightarrow \infty$.  Let $f(\Delta)$ be such that $f(\Delta)=N^{\beta-1}$, so that
\[
\Omega_{\alpha, \beta}(\theta, j,T)\\ = \lim_{\Delta \rightarrow 0} \frac{T}{\Delta f(\Delta)^2} \log \mathbb{E} \left( e^{\sum_{i=1}^{\lvert j \rvert}  f(\Delta)\theta_i A( \Delta j_{i-1}, \Delta j_{i} ) - f(\Delta)\theta_i \Delta (j_i -j_{i-1})\lambda\mathbb{E}(Y)}\right)\,.
\] 
We can then apply Lemma \ref{lem::pple3}, yielding
$$\Omega_{\alpha, \beta}(\theta, j,T)= \frac{1}{2}T\lambda\mathbb{E}(Y^2) \sum_{i=1}^{\lvert j \rvert}  \theta_i^2(j_{i}-j_{i-1})\,.$$
So, by Lemma \ref{lem::finiteA} the rate function is
\begin{align*}I_{j,T}(\mathbf{z})&=
\sup_{\theta \in \mathbb{R}^{\lvert j \rvert} } \left\{ \sum_{i=1}^{\lvert j \rvert} (z_i-z_{i-1}) \theta_i - \frac{1}{2}T\lambda\mathbb{E}(Y^2) \sum_{i=1}^{\lvert j \rvert}  \theta_i^2(j_{i}-j_{i-1}) \right \} \\
&= \sup_{\theta \in \mathbb{R}^{\lvert j \rvert} } \left\{ \sum_{i=1}^{\lvert j \rvert} T(j_i-j_{i-1})\left( \left(\frac{(z_i-z_{i-1})}{T(j_i-j_{i-1})} \right) \theta_i - \frac{1}{2}\lambda\mathbb{E}(Y^2) \theta_i^2 \right) \right\} \\
&=   \sum_{i=1}^{\lvert j \rvert} T(j_i-j_{i-1}) \Omega^*\left(\frac{(z_i-z_{i-1})}{T(j_i-j_{i-1})} \right)\,,
\end{align*}
where $$\Omega^*(y)=\sup_{\theta \in \mathbb{R}} \left(\theta y - \frac{1}{2} \lambda\mathbb{E}(Y^2) \theta^2\right)= \frac{y^2}{2\lambda\mathbb{E}(Y^2)}. $$
So, we have that
\begin{align*}
I_T(x) &= \sup_{j\in \mathscr{J}, 0<T'\leq T} I_{j,T'}(x) \\
& =\sup_{j\in \mathscr{J}, 0<T'\leq T} \sum_{i=1}^{\lvert j \rvert} T'(j_i-j_{i-1}) \Omega^*\left(\frac{(x(T'j_i)-x(T'j_{i-1}))}{T'(j_i-j_{i-1})} \right)\,.
\end{align*}
We want to show this is equal to 
$$I^T(x)= \begin{cases}
\int_0^{T} \frac{\dot{x}^2}{2 \lambda \mathbb{E}(Y^2)} dt &\textrm{ if $x(0)=0$ and $x\in\mathscr{AC}$\,,} \\
\infty &\textrm{otherwise\,.} 
\end{cases}
$$
Firstly,  $\Omega^*$ is non-negative for all $y$, as the value at $\theta =0$ is $0$, so the supremum  is at $j_{|j|}=1$ and $T'=T$.
In addition,  we have that $I_T \leq I^T$ by 
Jensen's inequality and convexity of $\Omega^*$ 
\cite[Lemma 2.2.5]{Dembo1998}. We need to prove the reverse inequality. Let us
start with  $f$, which is absolutely continuous, and let $g(t)
= d f /dt$, which is in $L_1([0,T])$. Then, for $k>1$ we can
define $\tau_k = T/k$ and
\begin{align*}
g^{(k)}(t)& = \tau_k^{-1} \int_{\lfloor t/\tau_k\rfloor \tau_k}^{(\lfloor t/\tau_k\rfloor +1)\tau_k} g(s) ds \textrm{  for $t\in[0,T)$}\,, \\
g^{(k)} (T) &= \tau_k^{-1} \int_{T-\tau_k}^{T} g(s) ds\,.
\end{align*}
Using these definitions we have
\[
I_T (f) \geq \liminf_{k\rightarrow \infty} \sum_{l=1}^{k} \tau_k \Omega^* \left[ \frac {f\left(l \tau_k \right)-f \left((l-1) \tau_k\right) } {\tau_k}  \right] 
=\liminf_{k\rightarrow \infty} \int_0^{T} \Omega^*(g^{(k)}(t))dt\,.
\]
In addition to this, by Lebesgue's theorem, $\lim_{k\rightarrow \infty} g^{(k)}(t) =g(t)$ almost everywhere in $[0,T]$. So, by Fatou's lemma and the lower semicontinuity of $\Omega^*$, we have
\[
\liminf_{k\rightarrow \infty} \int_0^{T} \Omega^*(g^{(k)}(t))dt \geq \int_0^{T} \liminf_{k\rightarrow \infty}  \Omega^*(g^{(k)}(t))dt 
\geq  \int_0^{T} \Omega^*(g(t))dt = I^T(f),
\]
which together show that  $I_T(f) \geq I^T(f)$, for
$f$ absolutely continuous.

Now let $f$ be a function from $[0,T]$ to $\mathbb{R}$ which is
not absolutely continuous. This implies that there exists
$\delta>0$ and $s_1^n<t_1^n \leq \cdots \leq s_{k_n}^n<t_{k_n}^n$ such that $\sum_{l=1}^{k_n} (t_l^n -s_l^n) \rightarrow 0$ but
$\sum_{l=1}^{k_n} |f(t_l^n) -f(s_l^n)| \geq \delta$.  In addition,
 $\Omega^*$ is non-negative, so we get
\begin{align*}
I_T(f) &= \sup_{\substack{0<t_1<t_2<\cdots <t_k =T\\ \theta_1 , \ldots , \theta_k \in \mathbb{R}}} \sum_{l=1}^{k} \theta_l(f(t_l)-f(t_{l-1})) -(t_l-t_{l-1})\frac{1}{2} \lambda\mathbb{E}(Y^2) \theta_l^2 \\
&\geq \sup_{\substack{0\leq s_1<t_1\leq s_2 <t_2<\cdots \leq s_k <t_k =T\\ \theta_1 , \ldots , \theta_k \in \mathbb{R}}} \sum_{l=1}^{k} \theta_l(f(t_l)-f(s_l)) -(t_l-s_l) \frac{1}{2} \lambda\mathbb{E}(Y^2) \theta_l^2\,.
\end{align*}
Now let $t_l=t_l^n$, $s_l=s_l^n$ and $\theta_l$ have the same sign as $f(t_l^n)-f(s_l^n)$, with $|\theta_i|=\rho$. This then gives
\[
I_T(f) \geq \limsup_{n \rightarrow \infty} \left\{ \rho \sum_{l=1}^{k_n} |f(t_l^n) -f(s_l^n)|  - \sup_{|\theta|=\rho} \left\{ \frac{1}{2} \lambda\mathbb{E}(Y^2) \theta^2 \right\} \sum_{l=1}^{k_n} (t_l^n -s_l^n) \right\} \geq \rho \delta .
\]
The choice of $\rho$
is arbitrary, implying that $I_T(f)=\infty$.

Finally, we have that $I_T(x)$ is increasing in $T$ as $\Omega^*$ is non-negative. So, $$I_{\alpha,\beta}(x) =\lim_{T\rightarrow \infty} I_T(x) =  \begin{cases}
\int_0^{\infty} \frac{\dot{x}^2}{2 \lambda \mathbb{E}(Y^2)} dt &\textrm{ if $x(0)=0$ and $x\in\mathscr{AC}$\,,} \\
\infty &\textrm{otherwise\,.} 
\end{cases}$$
\end{proof}
\begin{lemma}\label{lem::exptmsb}
Let $0<\alpha<\beta <1$ and $\alpha + \beta >1$.  For $A$ which satisfies Assumptions \ref{ass::mdsb} we have that $\tilde{\bar{A}}^N_{\alpha,\beta}$ is exponentially tight on $\mathscr{C}^T$ with the uniform topology.
\end{lemma}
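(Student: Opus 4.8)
The plan is to deduce exponential tightness in $\mathscr{C}^T$ (with the uniform norm) from a large--deviation bound on the modulus of continuity $\omega_x(\delta)=\sup\{|x(s)-x(t)|:s,t\in[0,T],\,|s-t|\le\delta\}$, using the Arzel\`a--Ascoli description of compact subsets of $\mathscr{C}^T$. For $a>0$ one takes $K_a=\{x\in\mathscr{C}^T:x(0)=0,\ \omega_x(\delta_m)\le 1/m \text{ for all }m\in\mathbb{N}\}$, which is compact, and it then suffices to show that for every $\eta,a>0$ there is $\delta>0$ with
\begin{equation*}
\limsup_{N\to\infty}\frac{1}{N^{\alpha+\beta-1}}\log\mathbb{P}\bigl(\omega_{\tilde{\bar{A}}^N_{\alpha,\beta}}(\delta)\ge\eta\bigr)<-a,
\end{equation*}
since choosing $\delta_m$ for the pair $(1/m,a+m)$ and summing over $m$ gives $\limsup_N N^{-(\alpha+\beta-1)}\log\mathbb{P}(\tilde{\bar{A}}^N_{\alpha,\beta}\notin K_a)<-a$.

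To estimate the modulus I would partition $[0,T]$ into $\lceil T/\delta\rceil$ cells of length at most $\delta$; as any pair $s,t$ with $|s-t|\le\delta$ lies in two adjacent cells, $\omega_{\tilde{\bar{A}}^N_{\alpha,\beta}}(\delta)$ is bounded by an absolute constant times the largest in--cell oscillation $\sup_{u\le r\le u+\delta}|\tilde{\bar{A}}^N_{\alpha,\beta}(r)-\tilde{\bar{A}}^N_{\alpha,\beta}(u)|$. The obstruction here is that, since $\beta<1$, $\tilde{\bar{A}}^N_{\alpha,\beta}$ is a nondecreasing function minus a linear drift of slope $N^{1-\beta}\lambda\mathbb{E}(Y)\to\infty$, so the in--cell oscillation cannot be controlled on a fixed sub--mesh. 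Instead I would refine each cell $[u,u+\delta]$ into $m_N$ equal sub--intervals $u=r_0<\dots<r_{m_N}=u+\delta$ with $m_N$ of order $N^{1-\beta}$, chosen so that $N^{1-\beta}\lambda\mathbb{E}(Y)\,\delta/m_N\le\eta/8$. Monotonicity of the polygonal approximation then yields
\begin{equation*}
\sup_{u\le r\le u+\delta}\bigl|\tilde{\bar{A}}^N_{\alpha,\beta}(r)-\tilde{\bar{A}}^N_{\alpha,\beta}(u)\bigr|\ \le\ \max_{1\le i\le m_N}\bigl|\tilde{\bar{A}}^N_{\alpha,\beta}(r_i)-\tilde{\bar{A}}^N_{\alpha,\beta}(u)\bigr|+\frac{N^{1-\beta}\lambda\mathbb{E}(Y)\,\delta}{m_N},
\end{equation*}
so the problem reduces to a uniform large--deviation upper bound, over the $O(N^{1-\beta})$ pairs $(u,r_i)$, on the centred increments $\tilde{\bar{A}}^N_{\alpha,\beta}(r_i)-\tilde{\bar{A}}^N_{\alpha,\beta}(u)$.

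For one such increment I would first use Lemma~\ref{poly} --- more precisely the bound $|\tilde{\bar{A}}^N_{\alpha,\beta}(0,t)-\tilde{A}^N_{\alpha,\beta}(0,t)|<|\zeta_N(N^{\alpha-\beta}t)|/N^{\alpha}$ established in its proof --- to replace $\tilde{\bar{A}}^N_{\alpha,\beta}$ by the original process $\tilde{A}^N_{\alpha,\beta}$ up to two terms of the form $|\zeta_N(\cdot)|/N^{\alpha}$; since $\zeta_N(\cdot)$ is distributed as $Y$ and, by Assumption~\ref{ass::mdsb}.3, $\mathbb{P}(|\zeta_N(\cdot)|>\eta N^{\alpha})$ decays at speed $N^{\alpha}$, and $N^{\alpha}\gg N^{\alpha+\beta-1}$ because $\beta<1$, these correction events are negligible at the relevant speed. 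For $\tilde{A}^N_{\alpha,\beta}(r_i)-\tilde{A}^N_{\alpha,\beta}(u)$, which by stationarity of $A$ is distributed as $N^{-\alpha}\bigl(A^{\oplus N}(0,N^{\alpha-\beta}\Delta_i)-N\lambda\mathbb{E}(Y)N^{\alpha-\beta}\Delta_i\bigr)$ with $\Delta_i=r_i-u\le\delta$, a Chernoff bound with parameter $\vartheta_N=\psi N^{\beta-1}$ (which tends to $0$, hence for each fixed $\psi>0$ eventually lies in the range where the short--time moment generating function expansion of Lemma~\ref{lem::pple3} applies, exactly as exploited in the proof of Lemma~\ref{lem::ratemsb}) gives, after the exact cancellation of the centring against the linear part of $\log\mathbb{E}[e^{(\vartheta_N/N^{\alpha})A(0,N^{\alpha-\beta}\Delta_i)}]$,
\begin{equation*}
\frac{1}{N^{\alpha+\beta-1}}\log\mathbb{P}\bigl(\tilde{A}^N_{\alpha,\beta}(r_i)-\tilde{A}^N_{\alpha,\beta}(u)\ge\tfrac{\eta}{8}\bigr)\ \le\ -\psi\tfrac{\eta}{8}+\tfrac12\lambda\mathbb{E}(Y^2)\psi^2\Delta_i+o(1),
\end{equation*}
with $o(1)\to0$ uniformly in $\psi$ on compacts and in $\Delta_i\in(0,\delta]$. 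Since $A\ge0$ the symmetric lower--tail bound is obtained the same way; optimising over $\psi>0$ yields $\limsup_N N^{-(\alpha+\beta-1)}\log\mathbb{P}(|\tilde{A}^N_{\alpha,\beta}(r_i)-\tilde{A}^N_{\alpha,\beta}(u)|\ge\tfrac{\eta}{8})\le-\eta^2/(128\,\lambda\mathbb{E}(Y^2)\delta)$, uniformly over all cells and sub--grid points.

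Finally I would assemble the estimate by a union bound over the $\lceil T/\delta\rceil\cdot m_N=O(N^{1-\beta})$ pairs together with the finitely many auxiliary events (the two tails and the two mark corrections per pair). Because $\alpha+\beta>1$, $\log\bigl(O(N^{1-\beta})\bigr)=O(\log N)=o(N^{\alpha+\beta-1})$, so the union bound does not reduce the exponential speed, and one obtains $\limsup_N N^{-(\alpha+\beta-1)}\log\mathbb{P}(\omega_{\tilde{\bar{A}}^N_{\alpha,\beta}}(\delta)\ge\eta)\le-\eta^2/(128\,\lambda\mathbb{E}(Y^2)\delta)$; choosing $\delta$ small enough that this is below $-a$ completes the argument (the finite--dimensional large deviation principles used along the way being supplied by Lemma~\ref{lem::finiteA}). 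The main obstacle is precisely the mismatch between the diverging drift slope $N^{1-\beta}\lambda\mathbb{E}(Y)$ and any fixed discretisation, which forces the $N$--dependent sub--grid of size $m_N\sim N^{1-\beta}$; the scheme then works only because $\alpha+\beta>1$ guarantees that the resulting $O(N^{1-\beta})$--fold union bound is absorbed by the $N^{\alpha+\beta-1}$ large--deviation speed.
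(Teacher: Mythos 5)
Your approach is genuinely different from the paper's. The paper proves exponential tightness in a ``soft'' way: it takes the candidate compact sets to be (closures of) sub-level sets of the Gaussian rate function $\int_0^T\dot x^2/(2\lambda\mathbb{E}(Y^2))$, controls the escape probability by invoking the upper bound of the large deviations principle already established on $\mathscr{C}^T$ \emph{with the weaker pointwise-convergence topology} (Dawson--G\"artner applied to Lemma \ref{lem::finiteA}, i.e.\ the first half of Lemma \ref{lem::finitesp}), and then verifies compactness via Arzel\`a--Ascoli and the equicontinuity forced by a bounded $L^2$-energy. You instead work ``hard'' and directly with the modulus of continuity: you discretise $[0,T]$, handle the $O(N^{1-\beta})$ drift by an $N$-dependent sub-grid of size $m_N\sim N^{1-\beta}$, run Chernoff on each centred increment with parameter $\psi N^{\alpha+\beta-1}$ (so the argument of the source MGF is $\psi N^{\beta-1}\to0$), and absorb the $O(N^{1-\beta}/\delta)$-fold union bound because $\log(N^{1-\beta})=o(N^{\alpha+\beta-1})$ precisely when $\alpha+\beta>1$. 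This is the right structural observation and is the same reason the paper's speed is $N^{\alpha+\beta-1}$; your method also avoids entirely the paper's rather delicate argument that $I_T>\gamma$ on $\overline{K_\gamma^c\cap\mathscr{AC}}$.

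Two caveats worth flagging. First, the optimisation over $\psi$ quietly pushes $\psi$ to $\psi^*\sim \eta/(\lambda\mathbb{E}(Y^2)\delta)$, which grows without bound as $\delta\to0$. Lemma \ref{lem::pple3} as stated only guarantees the uniform quadratic limit for the \emph{rescaled} parameter $\theta$ in some fixed box $[0,\theta^*]^n$, and its proof uses $M(\theta^*)<\infty$, so $\theta^*$ cannot be taken arbitrarily large just from Assumptions \ref{ass::mdsb}. Your parenthetical remark that ``for each fixed $\psi>0$'' the argument $\psi N^{\beta-1}$ eventually enters the regime where the expansion applies is the right intuition (the actual MGF argument $\psi N^{\beta-1}$ always vanishes), but it is not what Lemma \ref{lem::pple3} literally asserts; you would need to re-run its proof to show the limit holds for every fixed $\psi$, with a rate that is uniform over $\psi$ in an arbitrary compact set. (The paper's own route faces a cousin of this issue when it feeds $\Omega_{\alpha,\beta}$ into G\"artner--Ellis, so this is not a disqualifying flaw, but it should be made explicit.) Second, the first paragraph's ``choosing $\delta_m$ for the pair $(1/m,a+m)$ and summing over $m$'' glosses over the standard bookkeeping issue that each $\limsup$ is attained only for $N\geq N_m$ with $N_m$ possibly increasing in $m$; the usual fixes (a finite truncation depending on $a$, or an a priori crude uniform bound for the tail of the sum) should be stated. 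With those two points tightened, the argument is a valid and self-contained alternative to the paper's proof.
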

\begin{proof}
For a fixed $\gamma>0$, let
\begin{displaymath}
K_\gamma (T) = \left \{ f \in \mathscr{AC} : f(0)=0, \int_0^{\infty} \frac{\dot{x}^2}{2 \lambda \mathbb{E}(Y^2)} \leq \gamma + 1 \right\}\,.
\end{displaymath}
We want to show that $\bar{K}_{\gamma} (T)$ is compact and that $$\lim_{\gamma \rightarrow \infty} \limsup_{N \rightarrow
\infty} \frac{1}{N^{\alpha}} \log \mathbb{P}(\tilde{\bar{A}}^N_{\alpha,\beta}
|_{T} \notin K_\gamma (T)) = - \infty.$$ We show the second of these
first. We have that $\mathbb{P}( \tilde{\bar{A}}^N_{\alpha,\beta}
|_{T} \notin K_\gamma(T)) =\mathbb{P}( \tilde{\bar{A}}^N_{\alpha,\beta}
|_{T} \notin K_\gamma(T) \cap \mathscr{AC})$,
as, by definition, paths of $\tilde{\bar{A}}^N_{\alpha,\beta}
|_{T}$ are almost surely absolutely continuous. We also have that $\tilde{\bar{A}}^N_{\alpha,\beta}
|_{T}$ obeys a large deviations principle in this
space, but with the topology of pointwise convergence by the first half of Lemma \ref{lem::finitesp}. So,
\begin{displaymath}
\limsup_{N \rightarrow \infty} \frac{1}{N^{\alpha}} \log \mathbb{P}(\tilde{\bar{A}}^N_{\alpha,\beta}
|_{T} \notin K_\gamma(T)) \leq -\inf_{f \in \overline{K^c_\gamma(T)\cap \mathscr{AC}}} I_T(f)\,,
\end{displaymath}
where $K^c_\gamma(T)$ is the complement of $K_\gamma(T)$ and $\bar{K}$ is the closure of the set $K$.
By the definition of $K_\gamma (T)$ we have $I_T(f) > \gamma$ for $f \in \overline{K^c_\gamma(T)\cap \mathscr{AC}}$. If there exists $g \in \overline{K^c_\gamma(T)\cap
\mathscr{AC}}$ such that $I_T(g) \leq \gamma$, then there exists a sequence $f_n \in K^c_\gamma(T)\cap
\mathscr{AC} $  whose limit is $g$; but this cannot be the case because there would have to exist $N$ such that $I_T(f_N)< \alpha +
\beta$, and this is untrue by the definition of
$K_\gamma(T)$. Hence,
\begin{displaymath}
\limsup_{N \rightarrow \infty} \frac{1}{N^{\alpha}} \log \mathbb{P}(\tilde{\bar{A}}^N_{\alpha,\beta}
|_{T} \notin K_\gamma (T)) \leq -\gamma\,.
\end{displaymath}
This goes to $-\infty$ as $\gamma \rightarrow \infty$.

To prove compactness we make use of the Arzel\'a-Ascoli theorem,
which says that if $K_\gamma (T)$ is a closed and
bounded set of equicontinuous functions then it is compact.  To check the functions are equicontinuous, if $f\in K_\gamma (T)$ then the
continuous function $f$ is differentiable almost everywhere on
$[0,T]$, and, for all $0\leq s \leq t\leq T$,
\begin{displaymath}
\Omega^* \left( \left(\frac{f(t)-f(s)}{t-s}\right)^2 \cdot \frac{1}{2\lambda\mathbb{E}(Y)}\right) \leq  \frac{1}{t-s} \int_s^t \Omega^*\left(\frac{\dot{f}(t')}{2\lambda\mathbb{E}(Y)}\right) dt' \leq \frac{\gamma+1}{t-s}\,.
\end{displaymath}
By definition, the functions in the set $K_\gamma (T)$ have bounded derivatives. As such, in any interval of time $\delta$, the variation of the function is bounded. In addition, we have a bound on the set by setting $s=0$ and
$\delta =T$.
\end{proof}
\begin{lemma}\label{lem::ratebg1}
For a given $0<\alpha<1$ and $\beta >1$,  we have that $$ \Omega_{\alpha, \beta}(\theta, j,T)= \left \lbrace \begin{array}{cc}
    0 & \text{ if } \theta_j < \beta - 1 \,  \forall j\,, \\
    1 &  \text{ if } \theta_j = \beta-1 \, \forall j\,,\\
    \infty & \text{ otherwise\,, } 
\end{array} \right. 
$$
and
\begin{align*}
    I_{\alpha, \beta}(x) = \begin{cases}
\int_0^{\beta - 1} \dot{x} dt &\textrm{ if $x(0)=0$ and $x\in\mathscr{AC}$\,,} \\
\infty &\textrm{otherwise\,.} 
\end{cases}
\end{align*}
\end{lemma}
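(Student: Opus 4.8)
The plan is to follow the three‑step template used for Lemma \ref{lem::ratemsb}: first evaluate $\Omega_{\alpha,\beta}(\theta,j,T)$, then Legendre‑transform it to obtain the finite‑dimensional rate functions $I_{j,T}$, and finally assemble $I_{\alpha,\beta}$ through the suprema appearing in Lemmas \ref{lem::finitesp} and \ref{lem::infinitesp}. The essential new feature relative to cases (i)--(iv) is that here $\beta>1$: once $f(N)=N^{\alpha}\log N$ is inserted into the definition of $\Omega_{\alpha,\beta}$ (and the centering term, which is $o(N^{\alpha}\log N)$, discarded), the quantity to analyse is $N^{1-\alpha}(\log N)^{-1}\log\mathbb{E}[\exp((\log N)\sum_i\theta_i A(N^{\alpha-\beta}j_{i-1}T,N^{\alpha-\beta}j_iT))]$, in which the ``temperature'' $\theta_i\log N$ in the exponent \emph{diverges}, rather than vanishing as it did in Lemmas \ref{lem::pple} and \ref{lem::pple3}. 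It is this that both produces the $\log N$ in the speed and generates the threshold at $\theta_i=\beta-1$.

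For the first step I would expand over the number of points of $A$ in the shrinking window $(0,N^{\alpha-\beta}T]$ exactly as in the proof of Lemma \ref{lem::pple}. Using $\mathbb{P}(X(0,t)=1)=\lambda t+o(t)$, $\mathbb{P}(X(0,t)\ge 2)=o(t)$ and (\ref{eqn::shortprob}), the one‑point contribution to the expectation is $1+\lambda N^{\alpha-\beta}T\sum_i(j_i-j_{i-1})(M(\theta_i\log N)-1)+o(N^{\alpha-\beta})$, where $M(\theta_i\log N)=\mathbb{E}[e^{(\log N)\theta_i Y}]$. The two‑or‑more‑point remainder is handled by splitting at $\{X(0,N^{\alpha-\beta}T)>K\}$: on that event one uses Assumption \ref{ass::ldbg1}.1, and on its complement the bounds $\mathbb{P}(X(0,t)\ge2)=o(t)$ and finiteness of every mark moment; the difference from Lemma \ref{lem::pple} is that, because $\theta_i\log N\to\infty$, these estimates must be pushed through H\"older's inequality in order to keep the remainder negligible against the one‑point term. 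Taking logarithms and multiplying by $N^{1-\alpha}/\log N$, the expression is then controlled by a comparison of exponents, which yields the stated trichotomy: $\Omega_{\alpha,\beta}=0$ when every $\theta_i<\beta-1$, $\Omega_{\alpha,\beta}=\infty$ when some $\theta_i>\beta-1$, and the value $1$ at $\theta=(\beta-1,\dots,\beta-1)$ recording the $\log N$ factor surviving at the critical exponent. In particular $\Omega_{\alpha,\beta}$ vanishes on a neighbourhood of the origin, so is differentiable there; its lack of steepness at $\{\theta_i=\beta-1\}$ does not arise in the present lemma, being dealt with when Lemma \ref{lem::finiteA} is applied.

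For the rate function, a direct computation gives, for $\mathbf{z}$ whose increments $z_i-z_{i-1}$ are all non‑negative, $I_{j,T}(\mathbf{z})=\sup_{\theta}\{\sum_i(z_i-z_{i-1})\theta_i-\Omega_{\alpha,\beta}(\theta,j,T)\}=(\beta-1)\sum_i(z_i-z_{i-1})$, while $I_{j,T}(\mathbf{z})=\infty$ if some increment is negative (the value $1$ at the single point $\theta=(\beta-1,\dots,\beta-1)$ plays no role in the supremum). Inserting this into $I_T=\sup_{j,\,0<T'\le T}I_{j,T'}$ and then $I_{\alpha,\beta}=\sup_{T}I_T$, as in Lemmas \ref{lem::finitesp} and \ref{lem::infinitesp}, yields $I_{\alpha,\beta}$ in the claimed form; the passage from the piecewise representation to the integral one repeats the convexity/Jensen and Fatou--Lebesgue arguments used at the end of the proof of Lemma \ref{lem::ratemsb}.

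The main obstacle is the evaluation of $\Omega_{\alpha,\beta}$: none of the earlier limit lemmas applies, because the temperature in the log moment generating function now grows like $\log N$, and the delicate point is to show that the contribution of two or more points in the window $(0,N^{\alpha-\beta}T]$ stays negligible against the one‑point contribution, uniformly for $\theta$ in compact sets, despite this divergence — which is precisely where Assumption \ref{ass::ldbg1}.1 and the finiteness of all mark moments must be used, through H\"older's inequality rather than directly as in Lemma \ref{lem::pple}.
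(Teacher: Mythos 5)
Your plan follows the same route as the paper's motivational sketch --- Poisson approximation in the shrinking window $(0,N^{\alpha-\beta}T]$, isolation of the one-point contribution, comparison of exponents --- and you correctly identify why Lemmas \ref{lem::pple} and \ref{lem::pple3} cannot be invoked unchanged: the ``temperature'' $\theta_i\log N$ diverges instead of vanishing. That is all consistent with what the paper does.

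There are, however, two places where the argument has a genuine gap. First, having written the one-point contribution with $M(\theta_i\log N)=\mathbb{E}[N^{\theta_i Y}]$, you cannot simply assert that ``a comparison of exponents yields the stated trichotomy.'' The threshold $\theta_i=\beta-1$ emerges only if $M(\theta\log N)$ behaves like $N^{\theta}$, which requires $Y\equiv 1$; for marks with essential supremum $y^*\ne 1$ the critical value moves to $(\beta-1)/y^*$, and for unbounded $Y$ the set $\{\theta:\Omega_{\alpha,\beta}(\theta,j,T)<\infty\}$ reduces to $\{0\}$ despite $M(\theta)<\infty$ everywhere. The paper's own calculation silently replaces $M(\theta_j\log N)$ by $e^{\theta_j\log N}$, so it makes the same reduction without saying so; a complete proof must either state the restriction $Y\equiv1$ or carry the dependence on the mark law through to the threshold. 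Second, with $I_{j,T}(\mathbf{z})=(\beta-1)\sum_i(z_i-z_{i-1})=(\beta-1)z_{\lvert j\rvert}$ for nonnegative increments, the suprema of Lemmas \ref{lem::finitesp} and \ref{lem::infinitesp} give $I_{\alpha,\beta}(x)=(\beta-1)\lim_{T\to\infty}x(T)=\int_0^{\infty}(\beta-1)\dot{x}\,dt$ for nondecreasing $x\in\mathscr{AC}$ with $x(0)=0$, whereas the lemma states $\int_0^{\beta-1}\dot{x}\,dt=x(\beta-1)$; these disagree unless $x$ is constant after time $\beta-1$. You claim to recover the stated form, but your own derivation produces the other expression, and the discrepancy should be flagged rather than elided. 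A related technical point: $\Omega_{\alpha,\beta}$ jumps from $0$ to $\infty$ across the boundary of $\{\theta_j<\beta-1\,\forall j\}$ and so is not steep; you acknowledge this but defer it to Lemma \ref{lem::finiteA}, which \emph{assumes} steepness as a hypothesis, so that deferral does not resolve anything --- the finite-dimensional lower bound needs a direct argument, which neither you nor the paper supply.
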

To see why $\Omega_{\alpha, \beta}(\theta, j,T)$ takes this form in Lemma \ref{lem::ratebg1}, note that, for $0<\alpha<1$ and $\beta > 1$,
\begin{align*}
    \Omega_{\alpha, \beta}(\theta, j, T) = \lim_{N \to \infty} \frac{N^{1-\alpha}}{\log N} \log \mathbb{E}\left[e^{\langle\theta \log N, A^{j, N^{\alpha-\beta}T}\rangle} \right]\,.
\end{align*}
By replacing $t$ with $N^{\alpha - \beta}t$ in Lemma \ref{lem::pple}, we can see that the process $A(N^{\alpha - \beta}t_{j-1}, N^{\alpha-\beta}t_j)$ can be approximated by a Poisson process with mean $\lambda N^{\alpha - \beta}(t_j - t_{j-1})$.  Therefore,
\begin{align*}
    \Omega_{\alpha, \beta}(\theta, j, T) & = \lim_{N \to \infty} \frac{N^{1-\alpha}}{\log N} \prod_j \mathbb{E}\left[e^{\theta_j \log N A(N^{\alpha -\beta}t_{j-1}, N^{\alpha - \beta}t_j)}\right]\\
            & =  \lim_{N \to \infty} \frac{N^{1-\alpha}}{\log N} \sum_{j} N^{\alpha - \beta} \lambda (t_j - t_{j-1}) (e^{\theta_j \log N} -1) \\
            & =  \lim_{N \to \infty} \frac{N^{1 - \beta}}{\log N} \sum_{j} \lambda (t_j - t_{j-1})(N^{\theta_j} - 1) \\
            & = \lim_{N \to \infty} \sum_j \lambda (t_j - t_{j-1}) \left( \frac{N^{1 - \beta + \theta_j}}{\log N} \right) \\
            & = \begin{cases}
                    0 &\textrm{ if } \theta_j < \beta-1 \quad \forall j\,, \\
                    1 & \textrm{ if } \theta_j = \beta-1 \quad \forall j\,, \\
                    \infty & \textrm{ otherwise\,. }
                \end{cases}
\end{align*}
\begin{lemma}\label{lem::exptbg1}
Let $0<\alpha<1$ and $\beta >1$.  For $A$ which obeys Assumptions \ref{ass::ldbg1} we have that $\tilde{\bar{A}}^N_{\alpha,\beta}$ is exponentially tight on $\mathscr{C}^T$ with the uniform topology.
\end{lemma}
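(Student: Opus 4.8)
The plan is to mirror the proof of Lemma \ref{lem::exptmsb}: for each $L>0$ I would exhibit a subset $K_L\subset\mathscr{C}^T$ that is closed, uniformly bounded and equicontinuous — hence compact by the Arzel\'a--Ascoli theorem — and show
\[
\limsup_{N\to\infty}\frac{1}{N^{\alpha}\log N}\log\mathbb{P}\big(\tilde{\bar{A}}^N_{\alpha,\beta}|_T\notin K_L\big)\le -L .
\]
Two structural observations about $\tilde{\bar{A}}^N_{\alpha,\beta}|_T$ drive the choice of $K_L$. First, it is continuous (it is polygonal by Definition \ref{def:linerinterp}) with $\tilde{\bar{A}}^N_{\alpha,\beta}(0,0)=0$. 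Second, $t\mapsto\tilde{\bar{A}}^N_{\alpha,\beta}(0,t)+t$ is non-decreasing once $N$ is large enough that $N^{1-\beta}\lambda\le 1$, which holds for all large $N$ because $\beta>1$; so, up to a drift that is uniformly small on $[0,T]$, all the processes lie in one fixed cone of almost non-decreasing functions. I would therefore take $K_L$ to be the uniform closure of the set of continuous $f$ with $f(0)=0$, $f(t)+t$ non-decreasing, $f(T)\le M_L$, and $|f(t)-f(s)|\le\epsilon_k$ whenever $|t-s|\le\delta_k$ for every $k\in\mathbb{N}$, where $\epsilon_k\downarrow 0$; the constants $M_L$, $(\epsilon_k)$, $(\delta_k)$ are pinned down by the estimate below. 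The almost-monotonicity gives uniform boundedness of $K_L$ from the single constraint $f(T)\le M_L$, and the $(\epsilon_k,\delta_k)$-constraints give equicontinuity.

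By a union bound, $\mathbb{P}(\tilde{\bar{A}}^N_{\alpha,\beta}|_T\notin K_L)$ is controlled by countably many events of a single type: the event that some interval $[s,t]\subset[0,T]$ with $t-s\le\delta$ carries an increment $\tilde{\bar{A}}^N_{\alpha,\beta}(0,t)-\tilde{\bar{A}}^N_{\alpha,\beta}(0,s)>\epsilon$, the bound on $f(T)$ being the case $\delta=T$. Covering $[0,T]$ by $O(1/\delta)$ overlapping intervals of length $2\delta$ and, exactly as in the proof of Lemma \ref{poly}, peeling off the single interpolation mark at the interval boundary (whose contribution is absorbed by a Chernoff bound involving $M(\theta)$), it suffices to bound $\mathbb{P}\big(A^{\oplus N}(0,N^{\alpha-\beta}\delta)>\tfrac12\epsilon N^{\alpha}\big)$. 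A Chernoff bound gives, for $\theta>0$,
\[
\mathbb{P}\big(A^{\oplus N}(0,N^{\alpha-\beta}\delta)>\tfrac12\epsilon N^{\alpha}\big)\le \exp\!\Big(-\tfrac12\theta\epsilon N^{\alpha}+N\Lambda_{N^{\alpha-\beta}\delta}(\theta)\Big),
\]
and since $N^{\alpha-\beta}\delta\to0$, the small-time analysis underlying Lemma \ref{lem::pple} yields $N\Lambda_{N^{\alpha-\beta}\delta}(\theta)=\lambda\delta\,N^{1+\alpha-\beta}(M(\theta)-1)+o(N^{1+\alpha-\beta})$. The crucial point — and the new feature compared with cases (i)--(iv) — is that the speed $N^{\alpha}\log N$ forces one to take a Chernoff parameter $\theta=\theta_N$ growing like $\log N$. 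With $\theta_N=c\log N$ the leading term is $-\tfrac12 c\epsilon N^{\alpha}\log N$, while the arrival term is $\lambda\delta N^{1+\alpha-\beta}(M(c\log N)-1)$; because $\beta>1$ the prefactor is $N^{1+\alpha-\beta}=N^{1-\beta}\cdot N^{\alpha}$ with the decaying factor $N^{1-\beta}$, so — using Assumption \ref{ass::ldbg1}.3 (finiteness of $M$ on all of $\mathbb{R}$), together with a separate Chernoff estimate to strip off an atypically large single mark when $Y$ is unbounded — one can keep $N^{1-\beta}M(c\log N)$ small relative to $\log N$, making the arrival term negligible against $c\epsilon N^{\alpha}\log N$. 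Choosing $M_L$ and the gauge parameters $(\epsilon_k,\delta_k)$ so that the resulting coefficients of $N^{\alpha}\log N$ beat $L$ then yields the required escape bound.

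Finally, $K_L$ is compact by Arzel\'a--Ascoli — closed and uniformly bounded by construction, equicontinuous by the $(\epsilon_k,\delta_k)$-constraints — so $\{\tilde{\bar{A}}^N_{\alpha,\beta}|_T\}_N$ is exponentially tight in $\mathscr{C}^T$ with the uniform topology. Alternatively, the bound on $\mathbb{P}(\tilde{\bar{A}}^N_{\alpha,\beta}(0,T)>M_L)$ can be read off from the rate function $I^T$ via the first half of Lemma \ref{lem::finitesp}, applied with the finite-dimensional large deviations principle of Lemma \ref{lem::finiteA}, whose hypotheses in this scaling are checked alongside Lemma \ref{lem::ratebg1}. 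I expect the main obstacle to be precisely the tuning in the second paragraph: because the $\log N$ in the speed is divided out when we normalise, the estimate on an $\epsilon$-increment over a short window is exponentially small at speed $N^\alpha\log N$ only with a coefficient that degrades as $\epsilon$ shrinks, so the interplay between $\theta_N\asymp\log N$, the decaying arrival rate $N^{1-\beta}$ per unit of scaled time, the growth of $M$, and the equicontinuity gauge has to be balanced with care — and it is exactly here that the lightly-loaded regime $\beta>1$ is used in an essential way.
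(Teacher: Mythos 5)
The paper supplies no proof of Lemma \ref{lem::exptbg1}, only the remark that it ``may be obtained by analogous arguments to those used for Lemma \ref{lem::exptmsb}.'' Your direct route --- compact sets cut out by a modulus of continuity, an explicit Chernoff bound with $\theta_N\asymp\log N$ to control increments --- is in fact the only plausible one, because the paper's analogy already breaks at the first step: in Lemma \ref{lem::exptmsb} the candidate compacts are the level sets $\{I_T\leq\gamma+1\}$, and these are compact precisely because the quadratic bound on $\int\dot x^2$ charges an $\epsilon$-increment over a $\delta$-window a cost of order $\epsilon^2/\delta$, which blows up as $\delta\to0$; in case (v) the rate function is linear in the increment, its sublevel sets are just non-decreasing functions with bounded total increase, and those sets are not equicontinuous and hence not compact. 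So you are right to replace the level-set construction with an explicit modulus-of-continuity gauge.

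However, the concern you voice in your final paragraph is not a tuning problem but a fatal obstruction, and I do not believe the lemma can be proved as stated. Take $Y\equiv1$, which is admissible under Assumptions \ref{ass::ldbg1}. Over a window of scaled length $\delta>0$ the $N$ sources produce on average $\mu=\lambda\delta N^{1+\alpha-\beta}$ points, while an $\epsilon$-increment of $\tilde{\bar A}^N_{\alpha,\beta}$ requires about $k=\epsilon N^\alpha$ of them; the matching Poisson-type upper and lower bounds give
\[
\log\mathbb{P}\bigl(\tilde{\bar A}^N_{\alpha,\beta}(\delta)>\epsilon\bigr)=-k\log(k/\mu)\,(1+o(1))=-\epsilon(\beta-1)\,N^{\alpha}\log N\,(1+o(1)),
\]
the $\log(1/\delta)$ term being absorbed into the $o(1)$ because $\delta$ is fixed as $N\to\infty$. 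The exponent is linear in $\epsilon$ and, crucially, does not improve as $\delta\to0$. Now let $K\subset\mathscr{C}^T$ be any compact set and put $\omega_K(\delta)=\sup_{f\in K}\sup_{|t-s|\leq\delta}|f(t)-f(s)|$; by Arzel\'a--Ascoli, $\omega_K(\delta)\to0$. Since $\{\tilde{\bar A}^N_{\alpha,\beta}|_T\notin K\}\supseteq\{\tilde{\bar A}^N_{\alpha,\beta}(\delta)>\omega_K(\delta)\}$, the lower bound yields, for every fixed $\delta>0$,
\[
\limsup_{N\to\infty}\frac{1}{N^\alpha\log N}\log\mathbb{P}\bigl(\tilde{\bar A}^N_{\alpha,\beta}|_T\notin K\bigr)\geq-\omega_K(\delta)(\beta-1),
\]
and sending $\delta\downarrow0$ shows the left side is $\geq0$. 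Thus no compact $K\subset\mathscr{C}^T$ has escape probability decaying at speed $N^\alpha\log N$, and exponential tightness at this speed fails. (This is consistent with the fact that the sublevel sets of $I_{\alpha,\beta}$ in Theorem \ref{thm::ldbg1} are not compact, so that rate function is not good in the uniform topology either.) Your $\theta_N\asymp\log N$ Chernoff estimate correctly recovers the $-\epsilon(\beta-1)N^\alpha\log N$ upper bound, but no choice of gauge $(\epsilon_k,\delta_k)$ can turn an escape cost that is linear in $\epsilon$ and flat in $\delta$ into exponential tightness; case (iv) goes through precisely because there the cost scales like $\epsilon^2/\delta$. The gap you sensed is therefore real, and neither your argument nor the paper's sketched one can be completed without changing the statement.
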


Using these lemmas we can now prove the large deviations results.
\begin{pfof}{\textbf{Theorems \ref{thm::mdsb} and \ref{thm::ldbg1}.}}
We begin with Theorem \ref{thm::mdsb}. Lemma \ref{lem::finiteA}  show us that we have large deviations principles for the finite distributions of $\tilde{A}^N_{\alpha,\beta}$ and $\tilde{\bar{A}}^N_{\alpha,\beta}$. Now we extend this to a sample path result on the interval $[0,T)$ for any $0<T<\infty$ for $\tilde{\bar{A}}^N_{\alpha,\beta}$ in the space $\mathscr{C}^T$ with the uniform topology using Lemmas \ref{lem::finitesp} and \ref{lem::exptmsb}. This is then extended to the whole of $\mathscr{C}$ with the scaled uniform norm by Lemma \ref{lem::infinitesp} and \ref{lem::lt3}. For all $N$,  $\tilde{\bar{A}}^N_{\alpha,\beta}$ belongs to $\mathscr{C}_0$ (Lemma \ref{lem::C0}). We use Lemma 4.1.5 in \cite{Dembo1998} to restrict to this space. We expand $\mathscr{C}_0$ to $\mathcal{D}_0$ and use exponential tightness to get that $\tilde{A}^N_{\alpha,\beta}$ obeys a sample path large deviations principle, and Lemma \ref{lem::ratemsb} demonstrates that we have the required rate function.  This  established Theorem \ref{thm::mdsb}.  

The proof of Theorem \ref{thm::ldbg1} is similar, with Lemmas \ref{lemma: time-bounding for beta > 1}, \ref{lem::ratebg1} and \ref{lem::exptbg1} used in place of Lemmas \ref{lem::lt3},\ref{lem::ratemsb} and \ref{lem::exptmsb} respectively.
\end{pfof}

\bibliographystyle{unsrt}
\bibliography{references}

\end{document}